\documentclass[leqno,10pt]{article}
\usepackage{german, amsmath, amssymb, amsthm, latexsym, fancyhdr, enumerate, txfonts, expdlist, hyperref, mdwlist, mathrsfs, tikz, todonotes, empheq}
\usepackage[active]{srcltx}
\usepackage[T1]{fontenc}
\usepackage[latin1]{inputenc}
\usepackage{subfig}
\selectlanguage{english}

%
%
%
%
\newcommand{\w}{\omega}
\newcommand{\x}{x}

%
\newcommand{\X}{\ensuremath{X}}
%
%

%
%
%
%
%
%

%
%

%
%

%
%
\renewcommand{\d}{\ensuremath{\partial}}


\newcommand{\mc}{\mathcal}

\newcommand{\alphlist}{\begin{list}{(\alph{enumi})}{\usecounter{enumi}}}
\newcommand{\romanlist}{\begin{list}{(\roman{enumi})}{\usecounter{enumi}}}
\newcommand{\listend}{\end{list}}


\renewcommand{\:}{\colon}
\renewcommand{\=}{\coloneqq}

\newcommand{\ssq}{\ensuremath{\subseteq}}

\newcommand{\eps}{\ensuremath{\varepsilon}}

\newcommand{\lam}{\ensuremath{\lambda}}




\newcommand{\T}{\ensuremath{\mathbb{T}}}

\newcommand{\N}{\ensuremath{\mathbb{N}}} 
\newcommand{\R}{\ensuremath{\mathbb{R}}}
\newcommand{\Z}{\ensuremath{\mathbb{Z}}}


\newcommand{\I}{\ensuremath{\mathcal I}}
\newcommand{\A}{\ensuremath{\mathcal A}}
\newcommand{\B}{\ensuremath{\mathcal B}}



%
%
%
%


%
%

\newcommand{\sgn}{\ensuremath{\mathrm{sgn}}}




\parindent10pt     \topmargin0pt
\headheight0pt    \headsep0pt
\oddsidemargin0pt \evensidemargin0pt

\setlength{\textheight}{1.15\textheight} 
\setlength{\textwidth}{1.35\textwidth}

\theoremstyle{plain} 
\newtheorem{thm}{Theorem}[section]
\newtheorem{lem}[thm]{Lemma}
\newtheorem{prop}[thm]{Proposition}
\newtheorem{cor}[thm]{Corollary}

\newtheorem*{thm*}{Theorem}

\theoremstyle{definition}
\newtheorem{defn}[thm]{Definition}

\theoremstyle{remark}
\newtheorem{rem}[thm]{Remark}

\numberwithin{equation}{section}
\numberwithin{thm}{section}

\newcounter{mycount}

\makeatletter
\newcommand*{\rom}[1]{\expandafter\@slowromancap\romannumeral #1@}
\makeatother

\title{Non-Smooth Saddle-Node Bifurcations of Forced Monotone Interval Maps \rom{1}: Existence of an SNA}
\author{Gabriel Fuhrmann}
\date{}
\begin{document}
\maketitle
\begin{abstract}
We study one-parameter families of quasi-periodically forced monotone interval maps and 
provide sufficient conditions
for the existence of a parameter at which the respective system
possesses 
a non-uniformly hyperbolic attractor. This is equivalent to the existence of a sink-source orbit, that is, an orbit with positive Lyapunov exponent both forwards and backwards in time.
The attractor itself is a non-continuous invariant graph with negative Lyapunov exponent, often referred to as ``SNA''.
In contrast to former results in this direction,
our conditions are $\mc C^2$-open in the fibre maps.

By applying a general result about saddle-node bifurcations in skew products, we obtain a conclusion on the occurrence of non-smooth bifurcations in the respective families.
Explicit examples show the applicability of the derived statements.
\end{abstract}

\setlength{\oddsidemargin}{-0.03\textwidth}
\section{Introduction}
Bifurcation theory investigates qualitative changes in the long-term behaviour of a dynamical system along a continuous variation of the system.
For the simple case of a monotonously increasing interval map, the dynamics are qualitatively understood if the fixed points of the respective system are known. 
Hence, in this case bifurcation theory investigates the bifurcation of fixed points.
A well-known example is the saddle-node bifurcation of a one-parameter family of concave functions: If the considered parameter is ``small'', we have one attracting and one repelling fixed point, which approach each other upon the growth of the parameter until some threshold is reached. Above this threshold, these two fixed points have vanished. At the threshold itself, the two points merge together to one neutral fixed point.

In general, non-autonomous systems ask for other objects than fixed points (which might not even exist) to describe the qualitative dynamics \cite{Pötsche}. In the context of quasi-periodically forced monotone maps, 
a natural choice are \emph{invariant graphs}. Like fixed points of monotone interval maps, these are barriers which can't be crossed by an orbit. Furthermore, there is a one-to-one correspondence between the ergodic measures of a quasi-periodically forced monotone map and its invariant graphs (cf.\cite[Theorem~1.8.4]{Arnold}, \cite[Theorem~2.2]{Anagnostopoulou}).

We consider bifurcations of the invariant graphs of a class of quasi-periodically driven (also: forced) monotone interval maps 
\begin{align}\label{eq: skew-product introduction}
f\: \T^d \times \X \to \T^d \times \X, \quad (\theta,x) \mapsto (\theta+\w,\tilde f(\theta,x)),
\end{align}
where $\X$ denotes an interval, $\w$ is irrational,
$\tilde f(\theta,\cdot)$ is strictly increasing, and $\tilde f$ is $\mc C^2$. 
The most studied bifurcation patterns related to such maps are the pitchfork bifurcation (cf. \cite{JägerAMS, Glendinning, Feudel}) and the saddle-node bifurcation (cf. \cite{JägerAMS, Anagnostopoulou, Bjerklöv}). In this article, we deal with the latter phenomenon: 
Consider a one-parameter family of driven interval maps of the form 
(\ref{eq: skew-product introduction}) and
assume that for small parameters there are two invariant graphs (an attracting and a repelling one) which approach each other along the growth of the parameter until some threshold is reached. Above this threshold, these two invariant graphs have vanished. 

In contrast to the autonomous situation, the forced case allows a dichotomy at the threshold:
Either, there is just one neutral invariant graph.
Or, there are two invariant graphs; an attracting and a repelling one. Further, these graphs are \emph{pinched}, that is,
they coincide in a point, and hence on a residual set \cite{Stark}, while they almost surely differ from each other (cf. Figure~\ref{fig: non-smooth bifurcation}). This is called a \emph{non-smooth saddle-node bifurcation}. The attracting graph is referred to as a \emph{strange non-chaotic attractor} (SNA); the repelling one as a \emph{strange non-chaotic repeller} (SNR).

Quasi-periodic forcing and SNAs play an important role in a large class of models for real life systems: The Harper map is a mathematically well-understood dynamical system related to a certain kind of quasi-periodic Schr{\"o}dinger equations (see below); there is numerical evidence for the existence of SNAs in the physiologically relevant Izhikevich Neuron Model \cite{Kim}; \cite{Sonechkin} motivates that not just in order to get a complete description of the tides -as the result of the gravitational interaction between the Earth, the Moon, and the Sun- but even to predict interdecadal atmospheric variations, strange non-chaotic attractors have to be considered. Further, 
\cite{Crucifix} investigates the succession of ice ages and numerically encounters 
bifurcation phenomena creating SNAs.

It is thus desirable to understand the underlying principles of the creation of SNAs.
First results in this direction were obtained by Million\u{s}\u{c}ikov \cite{Millionscikov}, Vinograd \cite{Vinograd} and Herman \cite{Herman} who considered quasi-periodic $\text{SL}(2,\R)$-cocycles. In this context, the phenomenon is also known under the name of \emph{non-uniform hyperbolicity}. In 1984, Grebogi et al. found numerical evidence for SNAs in so-called \emph{pinched 
skew-products} \cite{GOPY}; a rigorous proof in this setting is due to Keller \cite{Keller}. 
Still, these findings lack some flexibility. 
By implementing parameter exclusion techniques,
this has been overcome in more recent results by Young \cite{Young}, Bjerkl\"ov \cite{Bjerklöv} and J\"ager \cite{Jäger}.

Inspired by these works, it is the goal of this article to derive
general conditions for the existence of SNAs, and thus for the occurrence of non-smooth saddle-node bifurcations for maps of the form (\ref{eq: skew-product introduction}).
So far, related results have only been dealing with special families of skew-products \cite{Bjerklöv, JägerAMS}.
While \cite{Bjerklöv} considers the particular case of the Harper map, \cite{JägerAMS} yields a result for additive forcing by imposing a non-differentiability assumption on the forcing term and therefore excludes the application to smooth examples.
The main achievement of this article is the merging of these two technically demanding approaches in order to get a natural and flexible statement on the non-smoothness of saddle-node bifurcations.
The following assertion is a direct consequence of our results.

\begin{thm*}
Let $\X$ be an interval, suppose $\w \in \T^d$ is Diophantine and consider the space
of one-parameter families
\begin{align*}
\mc F_\w=\left\{\left(f_\beta\right)_{\beta \in[0,1]}\: f_\beta \text{ is of the form (\ref{eq: skew-product introduction}) with rotation number } \w \text{ and  phase space } \X \text{; }  f_\beta \text { is }\mc C^1 \text{ in } \beta \right\}
\end{align*} 
equipped with the metric
\begin{align*}
d\left(\left(f_\beta\right)_{\beta \in[0,1]}, \left(g_\beta\right)_{\beta \in[0,1]}\right)=\sup_{\beta\in[0,1]} \left(\left\|f_\beta-g_\beta \right\|_2+\left\|\d_\beta f_\beta-\d_\beta g_\beta\right\|_0\right).
\end{align*}
There exists an open set $\mc U\ssq \mc F_\w$ such that each $\left(f_\beta\right)_{\beta\in[0,1]}\in \mc U$ undergoes a non-smooth saddle-node bifurcation.
\end{thm*}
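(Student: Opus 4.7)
The plan is to exhibit an open set $\mc U \ssq \mc F_\w$ whose members all satisfy a single $\mc C^2$-stable criterion that guarantees, at some critical parameter $\beta_*$ depending on the family, the coexistence of an attracting and a repelling invariant graph which are pinched: they coincide on a residual subset of $\T^d$ but differ almost surely. This is precisely a sink--source configuration and thus an SNA/SNR pair. The general saddle-node bifurcation theorem for skew products announced in the introduction, applicable because each $(f_\beta)_{\beta}\in \mc F_\w$ is $\mc C^1$ in $\beta$ and the fibre maps are strictly increasing, then promotes this single-parameter existence statement to a non-smooth saddle-node bifurcation on a neighbourhood of $\beta_*$.

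The first step is to construct the criterion at the level of a single map. Using the multi-scale analyses from the literature as a blueprint, I would build admissible scales $n_k\to\infty$ together with a nested sequence of Cantor-like good sets $\Omega_k\ssq \T^d$ on which the fibre cocycle is uniformly contracting near an attracting candidate graph $\varphi^-$ and uniformly expanding near a repelling candidate graph $\varphi^+$. The Diophantine assumption on $\w$ is used at each inductive step to avoid resonances between the $n_k$-return times and the base rotation, and the bounded geometry of the good sets eventually forces negative and positive Lyapunov exponents on $\varphi^-$ and $\varphi^+$ respectively, together with at least one common pinching point.

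The second step, which is the main technical novelty, is to phrase every ingredient of this construction as a finite set of open $\mc C^2$-conditions on the fibre maps: uniform-hyperbolicity bounds on scale-$n_k$ restrictions, quantitative bounded-distortion estimates via the second derivative of $\tilde f$, transversal crossing of $\varphi^\pm_\beta$ as $\beta$ passes through $\beta_*$, and non-degeneracy of $\d_\beta f_\beta$ in the parameter direction. Since each such condition involves only finitely many iterates at each scale and uniform $\mc C^2$-bounds, it survives small perturbations in the metric defining $\mc F_\w$, at the price of a slight shift of $\beta_*$. An explicit example shows that the resulting set $\mc U$ of families meeting all these open conditions is non-empty.

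The main obstacle is precisely this passage to $\mc C^2$-openness: the two existing routes to the pinching scenario either exploit the rigid linear cocycle structure of Schr\"odinger-type examples, which collapses under generic $\mc C^2$-perturbations, or deliberately build on a non-smooth forcing term whose sharp features are essential for the construction. Replacing these rigid ingredients by a hierarchy of open estimates forces careful bookkeeping of how $\mc C^2$-perturbations propagate through the multi-scale induction, with the cumulative loss of quantitative control at each scale matched against the Diophantine constants of $\w$. Once the resulting $\mc C^2$-open criterion for an SNA is in place, the reduction to a non-smooth saddle-node bifurcation via the abstract bifurcation result is essentially a formal consequence.
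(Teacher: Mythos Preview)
Your overall strategy matches the paper's: build a sink--source orbit at some $\beta_c$ via a multi-scale induction exploiting the Diophantine condition, then invoke the abstract saddle-node result (Theorem~\ref{thm: anagnostopoulou}) to upgrade this to a non-smooth bifurcation; non-emptiness of $\mc U$ comes from an explicit example. That much is correct.

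There is, however, a genuine gap in your openness argument. You propose to ``phrase every ingredient of this construction as a finite set of open $\mc C^2$-conditions'' including ``uniform-hyperbolicity bounds on scale-$n_k$ restrictions'', and then argue that each such condition survives small perturbations. But you have infinitely many scales $n_k$, so even if the condition at each scale is open, their intersection need not be open in $\mc F_\w$; a priori you only get a $G_\delta$ set. The paper resolves this differently and this is the essential point: the open set $\mc U$ is cut out by the finite list of conditions $(\mc A\ref{axiom: 1})$--$(\mc A\ref{axiom: 16})$ (plus the smallness of $|\I_{0,\beta_+(0)}|$ and largeness of $\alpha$), all of which involve only $f_\beta$ and its first and second derivatives at a \emph{single} iterate, never the iterates $f_\beta^{n_k}$. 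These conditions are manifestly $\mc C^2$-open in the fibre maps and $\mc C^0$-open in $\d_\beta f_\beta$. The entire multi-scale hierarchy is then \emph{derived} from these seed conditions, uniformly over all maps satisfying them: the core technical step is Lemma~\ref{lem: nu n}, which shows that the lower bound $s$ on $\d_\vartheta^2 f_{\beta,\theta}$ from $(\mc A\ref{axiom: 9})$ propagates to a uniform lower bound $\nu_n^\pm\geq \nu>0$ at every scale $n$, because the correction terms are controlled by the contraction/expansion exponents and decay geometrically. So openness is not obtained by checking an open condition at each scale, but by proving once and for all that a fixed finite set of first-iterate inequalities forces the whole induction to close. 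Your plan as written does not make this distinction, and without it the argument for openness of $\mc U$ does not go through.
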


\noindent
The precise description of the set $\mc U$ by means of conditions on the \emph{maps} $\tilde f_\beta$ is given in Section~\ref{sect: assumptions on the fibre maps}. Though a bit technical, these conditions are explicit and easy to check in examples.
In order to demonstrate the applicability, we further show that for each Diophantine $\w \in \T^1$ the family
$\left(f_\beta\right)_{\beta \in [0,1]}$
\begin{align*}
f_\beta\:  \T^1 \times \R \to \T^1 \times \R, \quad
(\theta,x)   \mapsto \left(\theta+\w, \arctan(\alpha x)-\beta (1+\cos2\pi\theta)\right)
\end{align*}
undergoes a non-smooth saddle-node bifurcation if $\alpha$ is big enough, by showing
that $\left(f_{\beta}\right)_{\beta\in [0,1]}$ lies in the open set $\mc U\ssq \mc F_\w$ (cf. Figure\ref{fig: non-smooth bifurcation}). 

\begin{figure}
\centering
\subfloat[]{\includegraphics[width=2.12in]{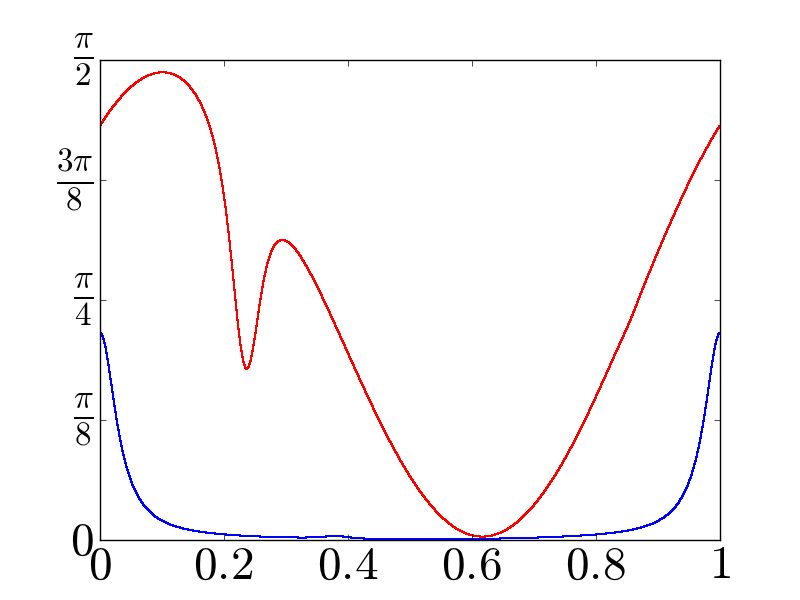}}
\subfloat[]{\includegraphics[width=2.12in]{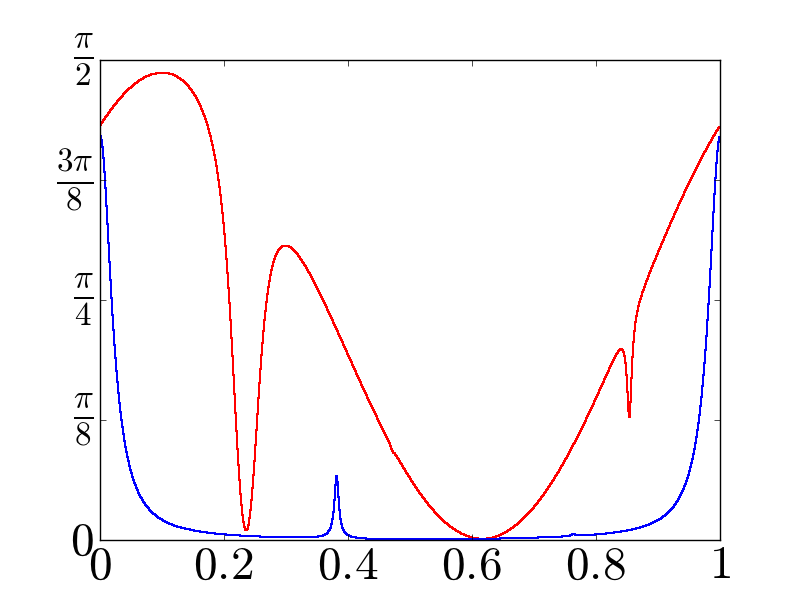}}
\subfloat[]{\includegraphics[width=2.12in]{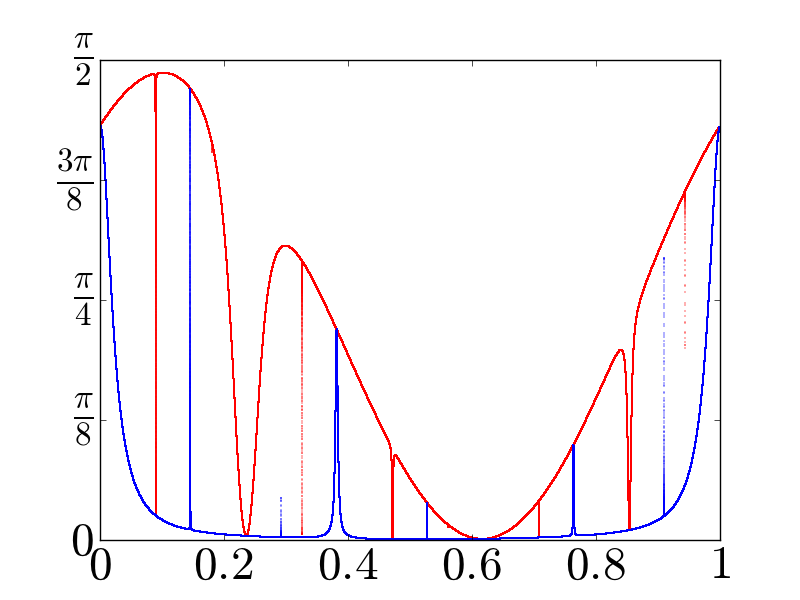}}
\caption{The non-smooth saddle-node bifurcation of the family
$(\theta,x)\mapsto \left(\theta+\w,\arctan(\alpha x)-\beta (1+\cos2\pi \theta)\right)$, where $\w$ is the golden mean and $\alpha=100$. 
We see how the invariant graphs approach each other on a measure zero set as we increase the parameter $\beta$ from the left to the right. The red graph is attracting; the blue one repelling.
(a) $\beta=0.7769$; (b) $\beta=0.7805$;
(c) $\beta=0.7805931$.}
\label{fig: non-smooth bifurcation}
\end{figure}

With the existence of SNAs there naturally arise several questions: At the bifurcation point, the set which is bounded by the pinched graphs is invariant. Is it even minimal? For the Harper map there is a positive answer to this problem for particular parameter regions \cite{Bjerklöv}; the case of \emph{pinched} skew products has been studied in \cite{JägerETDS}.
Further, and closely related: What is the Hausdorff dimension of the pinched graphs \cite{GrögerJäger}? These problems are dealt with in a follow-up article \cite{FuhrmannJäger}.
Finally, we don't address the problem of the speed with which the two initially continuous invariant graphs converge to the SNA and SNR, respectively \cite{BjerklövSaprynka, HaroLlave}. Note that this information is crucial when it comes to the prediction of a bifurcation in real life problems as in \cite{Scheffler}, where the collapse of a bacteria population which is exposed to an increasing light pressure is studied.
 
The present work provides the basis for these further studies and the developed techniques should allow to answer the above questions and approach further problems of similar type.
\paragraph{Acknowledgements.}
I would like to thank Tobias J\"ager for pointing out this problem to me. Moreover, I am very grateful for the fruitful discussions with him and Alejandro Passeggi as well as Maik Gr\"oger. 

This work was supported by the Emmy-Noether-grant ``Low-dimensional and Nonautonomous
Dynamics'' (Ja 1721/2-1) of the German Research Council.
\section{Preliminaries}
\subsection{Basic setting and notation}\label{sect: basic setting and notation}
Throughout this article, we consider families of $\mc C^2$-skew-products\footnote{As the hypothesis of our main result are of a local form, we need 
the systems to be $\mc C^2$ only in a section of the phase space, while on the complement of this section it suffices to assume they are just continuous and continuously differentiable with respect to $x$.} $\left(f_\beta\right)_{\beta \in [0,1]}$ 
\begin{align}\label{eq: skew-product}
\begin{split}
 f_\beta\: \T^d\times \X &\to \T^d\times \X,\\
 (\theta,x)&\mapsto \left(\theta+\w,\tilde f_\beta(\theta,x)\right),
\end{split}
\end{align}
where $\X$ is an interval (possibly the real line, half-open, \ldots),
$\tilde f_\beta(\theta,\cdot)$ is strictly increasing
and $\w \in \T^d$ is \emph{Diophantine} in the following sense.
\begin{defn}
Let $\mathscr C, \eta>0$. We say
$\w\in \T^d$ is \emph{Diophantine} of type $(\mathscr C,\eta)$ if
$d(k\w,0)\geq \mathscr C |k|^{-\eta}$
for all  $k\in\Z \setminus \{0\}$, where $|\cdot|$ denotes the Euclidean distance.
\end{defn}
It is obvious that the above definition includes all those $\w \in \T^d$ which are Diophantine in the usual sense.
We refer to a skew-product of the form (\ref{eq: skew-product}) as a \emph{quasi-periodically forced} (qpf) \emph{monotone} interval map.
$\T^d$ is called the \emph{base} and $\{\theta\}\times\X$ a \emph{fibre}
of (\ref{eq: skew-product}) for $\theta \in \T^d$.

By writing $f_\beta^{-l}(\theta,x)$ for $l\in \N$, we implicitly assume
$(\theta,x) \in f_\beta^l(\T^d,\X)$ such that the respective expression is well-defined due to the injectivity of $f_\beta$.
For $l\in \Z$ and $(\theta_0,x_0) \in \T^d\times \X$, we set $(\theta_l,x_l)\=f^l(\theta_0,x_0)$.
Given $\beta$ and $\theta$, we call 
\begin{align*}
f_{\beta,\theta}\:x \mapsto \tilde f_\beta(\theta,x)
\end{align*} 
a \emph{fibre map} of the skew-product (\ref{eq: skew-product}).
We denote the fibre map of the $n$-th iterate of $f_\beta$ by $f_{\beta,\theta}^n$, where $n\in \Z$. Hence, for $n\in \N$ we have 
\begin{align*}
f_{\beta,\theta}^n(x)=f_{\beta,\theta+(n-1)\w}\circ \ldots \circ f_{\beta,\theta}.
\end{align*}
Further,
$f_{\beta,\theta}^{-1}(x)=(f_{\beta,\theta-\w})^{-1}(x)$.

We denote the derivative of $\pi_2\circ f_{\beta}^n(\theta,x)$ with respect to $\beta$ by $\d_\beta f^n_{\beta,\theta}(x)$, where $\pi_2$ is the canonical projection to the second coordinate; the directional derivative of $\pi_2\circ f_{\beta}^n(\theta,x)$ with respect to a direction $\vartheta \in \T^d\setminus\{0\}$ is denoted by $\d_\vartheta f_{\beta,\theta}(x)$. Higher derivatives are denoted in an analogous way. Typically, we will consider $\vartheta$ to be a unit vector and write $\vartheta \in \mathbb S^{d-1}$. The derivative of the fibre maps are denoted by $\d_x f_{\beta,\theta}(x)$.

Given $\beta \in [0,1]$, we are interested in studying the \emph{invariant graphs} of $f_\beta$. These are measurable functions $\phi\: \T^d\to\X$ satisfying
\begin{align*}
 f_\beta(\phi(\theta))=\phi(\theta+\w),
\end{align*}
such that the corresponding graph $\Phi\=\{(\theta,\phi(\theta))\in \T^d\times \X\:\theta \in \T^d\}$ is an invariant set under $f_\beta$ in the sense
that $f_\beta(\Phi)=\Phi$. Like fixed points in monotone, unforced (that is, autonomous) interval maps, invariant graphs are barriers that can not be crossed by orbits (if we consider monotonously increasing fibre maps, too) and as a matter of fact, many statements for fixed points of unforced maps remain true (with slight changes) when going over to invariant graphs of qpf interval maps.

As in the unforced situation, the stability of an invariant graph $\phi$ is closely related to its associated Lyapunov exponent (\cite[Proposition~3.3]{JägerNonLin}), which is given by
\begin{align*}
 \lam(\phi)\=\int_{\T^d}\! \log\left|\d_xf_\theta(\phi(\theta))\right| \, d\theta.
\end{align*}
We say an invariant graph $\phi$ is an \emph{attractor} if $\lam(\phi)<0$; we call it 
\emph{repeller} if $\lam(\phi)>0$; and we call it \emph{neutral} if $\lam(\phi)=0$.

\begin{thm}[{cf. \cite[Theorem~2.1]{Anagnostopoulou}}]\label{thm: concave fibre maps Keller}
Consider a qpf monotone interval map $f$ of the form (\ref{eq: skew-product}). Assume that for each $\theta \in \T^d$ there exist measurable functions $\gamma^-\leq \gamma^+\:\T^d\to \X$ such that for all $\theta \in \T^d$ the fibre maps are strictly concave on $\Gamma(\theta)\=[\gamma^-(\theta),\gamma^+(\theta)]$. Further, assume that
$h(\theta)\=\inf_{x\in\Gamma(\theta)} \log \d_x f_\theta(x)$ has an integrable minorant.

Then there exist at most two distinct invariant graphs in $\Gamma\=\{(\theta,x)\in \T^d\times \X\:x\in \Gamma(\theta)\}$.\footnote{A graph $\phi$ is said to be contained in $\Gamma$ if $\phi(\theta)\in \Gamma(\theta)$ for all $\theta \in \T^d$. Further, we identify invariant graphs which coincide almost surely.} Moreover, if there exist two distinct invariant graphs $\phi^-\leq \phi^+$ in $\Gamma$, then $\phi^-$ is a repeller and $\phi^+$ is an attractor.
\end{thm}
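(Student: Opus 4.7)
The plan is to prove the two assertions by isolating two structural consequences of fibrewise strict concavity: first, a convex-combination identity that forbids a third invariant graph to be sandwiched between two others; and second, a chord/derivative comparison that pins down the signs of the Lyapunov exponents of the outer invariant graphs through Birkhoff's ergodic theorem. The unforced model is the elementary fact that a strictly concave interval map admits at most two fixed points, one attracting and one repelling when both exist; the forced version replaces the trivial ordering step by ergodicity of the base rotation.

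For the uniqueness statement, I would argue by contradiction. Suppose $\phi_1,\phi_2,\phi_3$ are three distinct invariant graphs in $\Gamma$. For $i\neq j$ the set $\{\phi_i=\phi_j\}$ is invariant under the ergodic rotation by $\w$ and must therefore have Lebesgue measure zero (as the $\phi_i$ are distinct), so after relabelling we may assume $\phi_1<\phi_2<\phi_3$ almost everywhere. Writing $\phi_2(\theta)=\alp(\theta)\phi_1(\theta)+(1-\alp(\theta))\phi_3(\theta)$ with $\alp(\theta)\in(0,1)$ and applying the fibre map, strict concavity on $\Gamma(\theta)$ yields
\begin{align*}
\phi_2(\theta+\w)\ =\ f_\theta(\phi_2(\theta))\ >\ \alp(\theta)\phi_1(\theta+\w)+(1-\alp(\theta))\phi_3(\theta+\w)
\end{align*}
for a.e.\ $\theta$. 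Rewriting the left-hand side in terms of $\alp(\theta+\w)$ and comparing coefficients (using $\phi_1<\phi_3$ a.e.) gives $\alp(\theta+\w)<\alp(\theta)$ a.e. Since $\alp\:\T^d\to(0,1)$ is bounded and measurable, the $\w$-invariance of Lebesgue measure forces $\int\alp(\theta+\w)\,d\theta=\int\alp(\theta)\,d\theta$, a contradiction.

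For the Lyapunov statement, assume $\phi^-<\phi^+$ are two invariant graphs in $\Gamma$ (as above, the inequality is strict a.e.) and set $D(\theta)\=\phi^+(\theta)-\phi^-(\theta)$. The standard chord/derivative inequalities for strictly concave functions give
\begin{align*}
\d_xf_\theta(\phi^+(\theta))\ <\ \frac{D(\theta+\w)}{D(\theta)}\ <\ \d_xf_\theta(\phi^-(\theta))
\end{align*}
for a.e.\ $\theta$. Taking logarithms and telescoping over $n$ iterates yields
\begin{align*}
\sum_{k=0}^{n-1}\log\d_xf_{\theta+k\w}(\phi^+(\theta+k\w))\ <\ \log D(\theta+n\w)-\log D(\theta)\ <\ \sum_{k=0}^{n-1}\log\d_xf_{\theta+k\w}(\phi^-(\theta+k\w)).
\end{align*}
Boundedness of $\Gamma$ (controlling the positive part) and the integrable minorant of $h$ (controlling the negative part) make both flanks $L^1$, so Birkhoff's ergodic theorem gives Ces\`aro limits $\lam(\phi^+)$ and $\lam(\phi^-)$. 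For a.e.\ $\theta$, $(\log D(\theta+n\w)-\log D(\theta))/n\to 0$: the upper bound comes from $D$ being bounded on $\Gamma$; the lower bound from the positive-density return of $(\theta+n\w)_n$ to any essential super-level set $\{D>\eps\}$, which prevents $\log D(\theta+n\w)$ from tending to $-\infty$ in Ces\`aro sense. This yields $\lam(\phi^+)\leq 0\leq \lam(\phi^-)$; the strict signs follow by applying Birkhoff to the defect-of-concavity function $\epsilon(\theta)\=\log\d_xf_\theta(\phi^-(\theta))-\log(D(\theta+\w)/D(\theta))>0$, which is non-negative measurable and strictly positive a.e., and symmetrically for $\phi^+$.

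The main obstacle I anticipate is the potential non-integrability of $\log D$, which can fail if the two invariant graphs come arbitrarily close on a large set; this precludes directly integrating the chord/derivative inequality to read off $\int\log(D(\theta+\w)/D(\theta))\,d\theta=0$. The route above sidesteps the issue by only ever handling $\log D$ through the telescoped Birkhoff sum, where the Ces\`aro behaviour is controlled by the ergodic recurrence of the rotation rather than by an $L^1$-estimate. A secondary subtlety is the strictness of the Lyapunov signs, which hinges on packaging the positive-measure strict inequality produced by strict concavity into a positive additive defect that survives the Birkhoff average; here the boundedness of $\Gamma$ and the minorant hypothesis on $h$ provide exactly the quantitative control needed.
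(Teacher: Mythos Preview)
The paper does not prove this theorem: it is stated as a preliminary result with a citation to \cite[Theorem~2.1]{Anagnostopoulou}, and no argument is given in the present paper. There is therefore no ``paper's own proof'' to compare your proposal against.

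That said, your approach is the standard one for this type of statement and is essentially the argument in the cited reference (which in turn goes back to Keller). The convex-combination coefficient $\alpha(\theta)$ together with invariance of Lebesgue measure is exactly the right mechanism to exclude a third invariant graph, and the chord/derivative inequality plus telescoping plus Birkhoff is the right mechanism for the Lyapunov signs. Two small technical points deserve attention. First, you invoke ``boundedness of $\Gamma$'', but the hypotheses only require $\gamma^\pm$ measurable; what you actually need is that $D$ is finite a.e., so that recurrence to a sublevel set $\{D<M\}$ yields a subsequence along which $\log D(\theta+n\w)$ stays bounded above, giving $\liminf_n \tfrac{1}{n}\log D(\theta+n\w)\leq 0$ and hence $\lambda(\phi^+)\leq 0$. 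Second, the strictness argument via the defect $\eps(\theta)>0$ is correct in spirit, but you should phrase it so that only the non-negativity of $\eps$ (not integrability of $\log D$) is used when applying Birkhoff: for non-negative measurable functions the ergodic theorem gives $\tfrac{1}{n}\sum_{k=0}^{n-1}\eps(\theta+k\w)\to\int\eps\in(0,\infty]$, and combining this with the telescoped identity and the recurrence bound on $\log D(\theta+n\w)/n$ forces $\lambda(\phi^-)>0$ (and symmetrically $\lambda(\phi^+)<0$). With these adjustments your argument is complete.
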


We call two invariant graphs $\phi,\psi$ pinched if $\phi(\theta)=\psi(\theta)$ for some $\theta$. Note that due to the minimality of the \emph{base map} $\theta\mapsto \theta+\w$, this implies that the two graphs coincide on a residual set in $\T^d$ \cite{Stark}.
It is obvious that in this case at least one of the two graphs is non-continuous. A non-continuous invariant graph $\phi$ is called a \emph{strange non-chaotic attractor} (SNA) if $\lam(\phi)<0$; it is called a \emph{strange non-chaotic repeller} (SNR) if $\lam(\phi)>0$.

In 1984, Grebogi et al. found numerical evidence for the existence
of an SNA in the case of \emph{pinched systems} \cite{GOPY}. These are qpf interval maps $f$ with monotone fibre maps which leave the zero line invariant (that is, $f_\theta(0)=0$) and posses a \emph{pinched point}, that is, there exists a point $\theta_0 \in \T^d$ with $f_{\theta_0}(x)=0$. Under additional assumptions, the zero-line turns out to be repelling. Thus, by proving the existence of an attracting graph (which necessarily has to share a residual set with the zero-line), Keller gave a rigorous argument for the existence of an SNA for pinched systems \cite{Keller}. In the injective setting (where there are no pinched points), we cannot argue in such a comparably direct way. Instead, the following concept proves helpful.
\begin{defn}
A \emph{sink-source orbit} is an orbit
whose backward and forward vertical Lyapunov exponent is positive, that is,
\begin{align*}
 \limsup_{n\to \infty} \frac1n \ln |\d_xf_{\beta,\theta}^n(x)|>0 \quad \text{ and }\quad
\limsup_{n\to \infty} \frac1n \ln |\d_xf_{\beta,\theta}^{-n}(x)|>0.
\end{align*} 
\end{defn}

\begin{thm}\emph{(\cite[Theorem~2.4]{JägerAMS})}\label{thm: sink source orbit <-> SNA}
 Suppose $f_\beta\: \T^d\times [a,b]\to \T^d\times[a,b]$ is a continuous qpf monotone interval map and
$(\theta,x)\mapsto \d_x f_\beta(x)>0$ is continuous. Then the existence of a sink-source-orbit implies the existence of both an SNA and an SNR.
\end{thm}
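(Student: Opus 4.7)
The plan is to work with the two extremal invariant graphs
\begin{align*}
\phi^+(\theta) := \lim_{n\to\infty} f_{\beta, \theta-n\w}^n(b), \qquad \phi^-(\theta) := \lim_{n\to\infty} f_{\beta, \theta-n\w}^n(a).
\end{align*}
By monotonicity of the fibre maps both sequences are monotone (decreasing and increasing, respectively) and bounded, so the limits exist. Joint continuity of $f_\beta$ forces $\phi^+$ to be upper semi-continuous, $\phi^-$ to be lower semi-continuous, and both to be invariant graphs with $\phi^-\leq \phi^+$, such that every bi-infinite orbit $(\theta_n, x_n)_{n\in \Z}$ satisfies $\phi^-(\theta_n)\leq x_n\leq \phi^+(\theta_n)$; in particular the sink-source orbit is sandwiched between the two graphs. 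The goal is then to establish: (i) $\phi^- < \phi^+$ on a set of full Lebesgue measure, (ii) $\lambda(\phi^+)<0$ and $\lambda(\phi^-)>0$, and (iii) $\phi^-(\theta^*) = \phi^+(\theta^*)$ for some $\theta^*\in\T^d$. Points (ii) and (iii) together imply that $\phi^\pm$ are necessarily non-continuous, so $\phi^+$ is an SNA and $\phi^-$ is an SNR.

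For (i) I would argue by contradiction. If $\phi^- = \phi^+$ pointwise, the sink-source orbit lies on the common graph; via the identity $\d_x f^{-n}_{\beta,\theta_0}(x_0) = 1/\d_x f^n_{\beta,\theta_{-n}}(x_{-n})$, combined with unique ergodicity of the base rotation applied to the continuous function $\theta\mapsto\log\d_x f_\beta(\theta,\phi(\theta))$ on the then-continuous graph, the backward Lyapunov exponent along any orbit must equal the negative of the forward Lyapunov exponent, contradicting positivity of both. Ergodicity of the Diophantine base rotation then upgrades a.e.\ non-equality to $\phi^-<\phi^+$ almost surely.

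For (ii) and (iii) I would apply the mean value theorem to $f^n_{\beta,\theta_0}$ on $[x_0, \phi^+(\theta_0)]$, obtaining for some $\xi^+_n \in (x_0, \phi^+(\theta_0))$
\begin{align*}
\phi^+(\theta_n) - x_n \;=\; \d_x f^n_{\beta,\theta_0}(\xi^+_n)\cdot\bigl(\phi^+(\theta_0)-x_0\bigr).
\end{align*}
Since the left hand side is bounded by $b-a$, the derivative $\d_x f^n_{\beta,\theta_0}(\xi^+_n)$ stays bounded in $n$, while by hypothesis $\d_x f^n_{\beta,\theta_0}(x_0)\to\infty$ along a subsequence. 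A distortion argument based on uniform continuity of $\d_x f_\beta$ then forces the orbit of $\xi^+_n$ to accumulate on $\phi^+$, and the Birkhoff sum $\sum_{k=0}^{n-1}\log\d_x f_\beta(\theta_k, f^k_{\beta,\theta_0}(\xi^+_n))$ can be related, via unique ergodicity of the base, to $n\lambda(\phi^+)$, yielding $\lambda(\phi^+)\leq 0$. As a byproduct $\phi^+(\theta_n)-x_n\to 0$ along a subsequence $n_k\to\infty$; passing to a further subsequence with $\theta_{n_k}\to\theta^*$ and $x_{n_k}\to x^*$, the upper semi-continuity of $\phi^+$, the lower semi-continuity of $\phi^-$ and the sandwich $\phi^-(\theta_{n_k})\leq x_{n_k}\leq \phi^+(\theta_{n_k})$ together yield $\phi^-(\theta^*)=x^*=\phi^+(\theta^*)$. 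A symmetric argument on $[\phi^-(\theta_0), x_0]$ using the backward positive Lyapunov exponent delivers $\lambda(\phi^-)\geq 0$; the strict inequalities in (ii) then follow from (i).

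The delicate step is the approximation of the Birkhoff sum at $\xi^+_n$ by the Lyapunov integral $\lambda(\phi^+)$ in the presence of only upper semi-continuity of $\phi^+$. One has to show, using the $\mc C^1$-dependence of $\d_x f_\beta$ together with the exponential growth of $\d_x f^n_{\beta,\theta_0}$ at $x_0$, that the orbit of $\xi^+_n$ stays uniformly close to the graph of $\phi^+$ for a definite fraction of the time scale $1\leq k\leq n$, so that Lebesgue-a.e.\ ergodic averages can be transferred to this specific orbit via unique ergodicity. A similar semi-continuity analysis is what permits the pinching conclusion in the absence of continuity of $\phi^\pm$.
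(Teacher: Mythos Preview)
The paper does not prove this statement; it is quoted from \cite{J�gerAMS} (Theorem~2.4 there) and used as a black box, so there is no in-paper argument to compare against.

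Your overall architecture---extremal bounding graphs $\phi^\pm$, their semi-continuity, the sandwich $\phi^-\leq x_n\leq\phi^+$, and the three targets (i)--(iii)---is exactly the standard route taken in the cited reference. Step~(i) is correct once you add the observation that $\{\phi^+>\phi^-\}$ is \emph{open} (it is a strict superlevel set of the upper semi-continuous function $\phi^+-\phi^-$), so that the ergodic dichotomy is literally ``$\phi^+=\phi^-$ everywhere'' versus ``$\phi^+>\phi^-$ almost everywhere''; in the first case the common graph is genuinely continuous and your unique-ergodicity contradiction goes through.

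The genuine gap is in (ii) and (iii). From the mean-value identity you obtain only that $\d_x f^n_{\beta,\theta_0}(\xi_n^+)$ is \emph{bounded}; nothing you wrote forces it, or $\phi^+(\theta_n)-x_n$, to tend to zero along a subsequence, so the pinching in (iii) is not yet established. In fact a naive distortion bound points the wrong way: writing $\log\bigl(\d_x f^n(x_0)/\d_x f^n(\xi_n^+)\bigr)\leq C\sum_{k<n}|x_k-f^k(\xi_n^+)|$ and using that the left side grows like $\lambda n$ shows that $\phi^+(\theta_k)-x_k$ is on average bounded \emph{below}, not that it vanishes. More seriously, the proposed passage from the Birkhoff sum at the moving points $f^k_{\beta,\theta_0}(\xi_n^+)$ to $\lambda(\phi^+)$ ``via unique ergodicity'' cannot work as stated: unique ergodicity controls averages of \emph{continuous} observables, whereas $\theta\mapsto\log\d_x f_{\beta,\theta}(\phi^+(\theta))$ is only measurable---indeed your goal is to show $\phi^+$ is discontinuous. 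In the cited reference this difficulty is bypassed rather than confronted: the inequalities $\lambda(\phi^+)\leq 0$ and $\lambda(\phi^-)\geq 0$ are proved as general facts about the bounding graphs of any qpf monotone map on a compact interval (using only that $f^n_{\beta,\theta-n\w}(b)\searrow\phi^+$ and an integrated mean-value argument), with no appeal to the sink-source orbit; the sink-source orbit is invoked solely to rule out the single-continuous-graph case and, via a separate argument, to obtain the pinching. Your sketch conflates these two roles, and the ``delicate step'' you flag is not resolved by the mechanism you propose.
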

We prove the existence of an SNA by proving the existence of a sink-source orbit.
Note that if $\X=\R$, we may apply Theorem~\ref{thm: sink source orbit <-> SNA} by considering $\X$ to be the extended real line $[a,b]=[-\infty,\infty]$, since the constructed sink-source orbit is bounded. We deal in an analogous way with open and half-open intervals, respectively.
\subsection{Saddle-node bifurcations in quasi-periodically forced systems}\label{sect: saddle-node bifurcations in qpf systems}
Non-autonomous bifurcation theory in the setting of one-parameter families of qpf monotone interval maps studies the bifurcation of invariant graphs along the change of the parameter.
An often considered situation is that of a saddle-node bifurcation \cite{JägerSiegmund, JägerAMS, Anagnostopoulou}: There exists a critical parameter $\beta_c$ such that for $\beta<\beta_c$ there are two continuous invariant graphs, while
there is no invariant graph for $\beta>\beta_c$. At $\beta=\beta_c$ there exists (in contrast to the un-forced case) a dichotomy:
Either there occurs a \emph{smooth} or a \emph{non-smooth saddle-node} bifurcation.

\begin{thm}\emph{(cf. \cite[Theorem~6.1]{Anagnostopoulou})}\label{thm: anagnostopoulou}
 Let $\w\in \T$ and suppose
$\left(f_\beta \right )_{\beta \in [0,1]}$ is a family of qpf monotone $\mc C^2$ interval maps. Further, assume that there exist continuous functions $\gamma^-, \gamma^+\: \T^d\to \X$ with $\gamma^-<\gamma^+$ such that the following holds (for all $\beta \in [0,1]$ and
$\theta \in \T^d$ where applicable).
\begin{enumerate}[(i)]
 \item There exist two distinct continuous $f_0$-invariant graphs and no 
$f_1$-invariant graph in $\Gamma$;
\item $f_{\beta,\theta}(\gamma^\pm(\theta))\leq \gamma^\pm(\theta+\w)$;
\item the maps $(\beta,\theta,x)\mapsto \d_x^i f_\beta(\theta,x)$ with $i=0,1,2$ and
$(\beta,\theta,x)\mapsto \d_\beta f_\beta(\theta,x)$ are continuous;
\item $\d_x f_{\beta,\theta}(x)>0 $ for all $ x\in\Gamma(\theta)$ \label{bull: bifurcation 5};
\item $\d_x^2 f_{\beta,\theta}(x)<0 \ (x\in \mathring \Gamma(\theta))$;
\item $\d_\beta f_{\beta,\theta}(x)<0 \ (x\in \Gamma(\theta))$.
\end{enumerate}
Then there exists a unique critical parameter $\beta_c\in (0,1)$ such that there holds:
\begin{itemize}
 \item If $\beta <\beta_c$, then there exist two continuous $f_\beta$-invariant graphs 
$\phi_\beta^-<\phi_\beta^+$ in $\Gamma$ with $\lambda(\phi^-_\beta)<0$ and 
$\lambda(\phi_\beta^+)>0$.
\item If $\beta=\beta_c$, then either there exists exactly one $f_\beta$-invariant 
graph $\phi_\beta$ in $\Gamma$, or there exist two semi-continuous and pinched $f_\beta$-invariant graphs $\phi_\beta^-<\phi_\beta^+$ a.s. in $\Gamma$, with $\phi^-_\beta$ lower and $\phi_\beta^+$ upper semi-continuous. 
If there is only one invariant graph $\phi_\beta$, then $\lambda(\phi_\beta)=0$. 
If there are two graphs, then $\lambda(\phi_\beta^-)>0$ and
$\lambda(\phi_\beta^+)<0$. 
\item If $\beta>\beta_c$, then no $f_\beta$-invariant graph exists in $\Gamma$.
\end{itemize}
\end{thm}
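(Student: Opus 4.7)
The strategy is to define $\beta_c$ via a monotone-family-of-graphs argument and then to analyse the three regimes $\beta<\beta_c$, $\beta=\beta_c$, $\beta>\beta_c$ in turn. The three driving ingredients are: (a) forward-invariance of $\Gamma$ supplied by (ii) together with the fibre monotonicity (iv), which allows one to construct invariant graphs in $\Gamma$ as monotone limits of iterates of $\gamma^\pm$; (b) the strict concavity (v), which via Theorem~\ref{thm: concave fibre maps Keller} restricts the number of invariant graphs in $\Gamma$ to at most two and forces them to have strictly attracting and repelling Lyapunov exponents; and (c) the strict monotonicity (vi) of the family in $\beta$, which translates to a pointwise monotonicity of these extremal graphs as $\beta$ varies.

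First I would produce, whenever an invariant graph exists in $\Gamma$ at parameter $\beta$, the uppermost and lowermost such graphs $\phi^+_\beta,\phi^-_\beta$ by the standard construction as monotone limits of iterates of $\gamma^\pm$ under $f_\beta$ (monotonicity coming from (ii) and (iv), convergence from boundedness within $\Gamma$). Continuity of $f_\beta$ in all arguments via (iii) makes the limits $f_\beta$-invariant, and Theorem~\ref{thm: concave fibre maps Keller} then forces them to be the only invariant graphs in $\Gamma$, with $\lambda(\phi^+_\beta)<0<\lambda(\phi^-_\beta)$ whenever $\phi^+_\beta\ne\phi^-_\beta$. The key observation is that (vi) makes these graphs monotone in $\beta$: whenever $\beta_1<\beta_2$, the inequality $f_{\beta_2,\theta}<f_{\beta_1,\theta}$ on $\Gamma(\theta)$ gets passed to the construction and yields $\phi^+_{\beta_2}\leq \phi^+_{\beta_1}$ and $\phi^-_{\beta_2}\geq \phi^-_{\beta_1}$ pointwise.

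Now set $\beta_c:=\sup\{\beta\in[0,1]\: f_\beta \text{ admits two distinct continuous invariant graphs in } \Gamma\}$. By (i), $\beta_c>0$; and since the monotonicity of $\phi^\pm_\beta$ in $\beta$ implies that the existence of an invariant graph at any parameter propagates to all smaller parameters, the absence of an invariant graph at $\beta=1$ yields $\beta_c<1$. For $\beta<\beta_c$, the graphs $\phi^\pm_\beta$ inherit continuity by a standard hyperbolic-persistence argument: Theorem~\ref{thm: concave fibre maps Keller} ensures that the exponent of $\phi^+_\beta$ is strictly negative, so $\phi^+_\beta$ is uniformly hyperbolic and continuous in $(\theta,\beta)$; the same reasoning applied to the time-reversed system handles $\phi^-_\beta$. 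The strict signs of the Lyapunov exponents come from Theorem~\ref{thm: concave fibre maps Keller}.

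The delicate step is $\beta=\beta_c$. Pointwise monotonicity in $\beta$ furnishes well-defined limits $\phi^\pm_{\beta_c}(\theta):=\lim_{\beta\nearrow\beta_c}\phi^\pm_\beta(\theta)$; by continuity in $\beta$ these are $f_{\beta_c}$-invariant, and as monotone limits of continuous functions they are lower and upper semi-continuous, respectively. The dichotomy in the statement then splits according to whether these limits coincide a.s.\ or not. In the former case the common graph must have Lyapunov exponent zero: a strictly negative exponent would, by the persistence argument, survive a small increase of $\beta$ and produce an invariant graph beyond $\beta_c$, contradicting maximality; a strictly positive exponent is excluded symmetrically via the inverse system. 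In the latter case $\phi^-_{\beta_c}<\phi^+_{\beta_c}$ a.s., the Lyapunov exponent signs are supplied by Theorem~\ref{thm: concave fibre maps Keller}, and the two graphs must in fact be pinched --- otherwise they would be continuous and bounded apart by a positive distance, and would again persist for $\beta$ slightly above $\beta_c$. Finally, no invariant graph exists in $\Gamma$ for $\beta>\beta_c$: the presence of one would, by pointwise monotonicity and Theorem~\ref{thm: concave fibre maps Keller}, force two distinct continuous invariant graphs to exist at parameters just above $\beta_c$, contradicting the choice of $\beta_c$. The principal obstacle throughout will be the perturbation/persistence lemma linking hyperbolicity of a continuous invariant graph to its robustness under $\mc C^2$-small changes of $\beta$ --- this is precisely where conditions (iii)--(vi) are used in concert.
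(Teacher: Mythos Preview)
The paper does not contain a proof of this theorem: it is quoted as background from \cite[Theorem~6.1]{Anagnostopoulou} (note the ``cf.''\ preceding the citation) and is followed immediately by a remark, not a proof. Hence there is no in-paper argument to compare your proposal against.

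That said, two technical points in your sketch deserve attention. First, condition (ii) does \emph{not} give forward invariance of $\Gamma$: both inequalities $f_{\beta,\theta}(\gamma^\pm(\theta))\leq\gamma^\pm(\theta+\omega)$ point the same way, so forward iterates of $\gamma^+$ decrease (yielding $\phi^+_\beta$ as a limit when an invariant graph exists), but forward iterates of $\gamma^-$ also decrease and may leave $\Gamma$ from below. The lower graph $\phi^-_\beta$ must instead be obtained from \emph{backward} iterates of $\gamma^-$, which increase by (ii) and (iv); this is also what makes your monotonicity claim $\phi^-_{\beta_2}\geq\phi^-_{\beta_1}$ come out with the correct sign. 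Second, the Lyapunov-exponent signs you derive from Theorem~\ref{thm: concave fibre maps Keller} ($\lambda(\phi^+_\beta)<0<\lambda(\phi^-_\beta)$) are the ones consistent with concavity and with the $\beta=\beta_c$ clause of the statement; the displayed inequalities for $\beta<\beta_c$ in the statement appear to carry a typographical sign swap, so do not try to reconcile your argument with them. With these corrections your outline matches the standard proof in \cite{Anagnostopoulou}: build the extremal graphs as monotone pull-back/push-forward limits, use concavity to bound their number and fix the exponent signs, exploit monotonicity in $\beta$ to define $\beta_c$, and use uniform hyperbolicity to get persistence and continuity on $(0,\beta_c)$ and to rule out a hyperbolic single graph at $\beta_c$.
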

\begin{rem}
If there exist two invariant graphs at the critical parameter $\beta_c$, we speak of a \emph{non-smooth saddle-node bifurcation}. The other case is referred to as a \emph{smooth} bifurcation.
\end{rem}
The main goal of this article is to provide natural conditions under which the occurrence of a non-smooth saddle-node bifurcation is guaranteed.
\section{Statement of the main result and applications}\label{sect: assumptions on the fibre maps}
We first collect a number of assumptions on the considered skew-products which we need in order to formulate our main result. In order to both make the reader familiar with these assumptions and to demonstrate how they apply to some standard skew-product families, we explicitly show that they are satisfied by
\begin{align}\label{eq: arctan}
\tag{$\ast$}
\begin{split}
f_\beta\:  \T^1 \times \R &\to \T^1 \times \R\\
(\theta,x)  & \mapsto \left(\theta+\w, \arctan(\alpha x)-\beta\cdot \frac{\pi}4 (1+\cos2\pi\theta)\right),
\end{split}
\end{align}
for Diophantine $\w \in \T^1$ and large enough $\alpha$.

To guarantee the existence of a sink source orbit, we need 
to ensure that the respective orbit spends most of the positive times
in regions of (vertical) expansion and most of the negative times in 
(vertically) contracting regions. For that reason, we assume the existence
of both an interval of expansion
$E=[e^-,e^+]$ and contraction $C=[c^-,c^+]$ with $e^+<c^-$ and such that
\begin{enumerate}[${(\mathcal A}1)$]
\item $\d_x f_{\beta,\theta}(x)<\alpha_c$ for $(\theta,x)\in \T^d\times C$,\label{axiom: 1}
\item $\d_x f_{\beta,\theta}(x)>\alpha_e$ for $(\theta,x)\in \T^d\times E$,\label{axiom: 2}
\suspend{enumerate}
where $0<\alpha_l<\alpha_c<1<\alpha_e<\alpha_u$ and
\resume{enumerate}[{[${(\mathcal A}1)$]}]
\item $\alpha_l<\d_x f_{\beta,\theta}(x)<\alpha_u$ for all $(\theta,x)\in \T^d\times [e^-,c^+]$
\label{axiom: 3}. 
\suspend{enumerate}
Instead of considering all of the phase space $\T^d\times \X$, we restrict our analysis to the section $\T^d\times [e^-,c^+]$. 
Thus, $e^-$ and $c^+$ play the roles of $\gamma^\mp$ in Theorem~\ref{thm: anagnostopoulou}.
\resume{enumerate}[{[${(\mathcal A}1)$]}]
\item $f_{\beta,\theta}(c^+)\leq c^+$ and $f_{\beta,\theta}(e^-)\leq e^-$. \label{axiom: 4}
\suspend{enumerate}
In Theorem~\ref{thm: anagnostopoulou}, there exist two invariant graphs between $\gamma^-$ and $\gamma^+$ for $\beta=0$ and no invariant graphs for $\beta=1$. 
In order to ensure this, we suppose, in addition to $(\mathcal A\ref{axiom: 4})$, that
\resume{enumerate}[{[${(\mathcal A}1)$]}]
\item $f_{0,\theta}(c^-)\geq c^-$ for all $\theta\in \T^d$ and 
$f_{1,\theta}(c^+) \leq e^-$ for some $\theta \in \T^d$. \label{axiom: 5}
\suspend{enumerate}
Before formulating further assumptions, let us define the introduced
quantities for (\ref{eq: arctan}) and see how ${(\mathcal A}\ref{axiom: 1})$-
 ${(\mathcal A}\ref{axiom: 5})$ are verified in this particular case.
Set $e^-\=0$, 
$e^+\=r/\alpha$ for some $r>0$, $c^+\=\pi/2$ and fix an arbitrary $c^-$ in $(e^+,c^+)$.
As
\begin{align*}
 &\d_x f_{\beta,\theta}(x)=\frac{\alpha}{1+(\alpha x)^2},
\end{align*}
we get $(\A\ref{axiom: 1})-(\A\ref{axiom: 3})$ with $\alpha_e,\alpha_c^{-1}=\alpha^{\frac2p}$ and
$\alpha_u,\alpha_l^{-1}=\alpha^p$ for some fixed
$p>2$ if $\alpha$ is large enough.
Further, $(\A\ref{axiom: 4})$ is evident and $(\A\ref{axiom: 5})$ trivially holds under the assumption of large enough $\alpha$.

As in Theorem~\ref{thm: anagnostopoulou}, we naturally assume monotone dependence
on $\beta$.
\resume{enumerate}[{[${(\mathcal A}1)$]}]
\item $f_{(\cdot)}(\theta,\x)$ is strictly decreasing for fixed $(\theta,x)\in \T^d\times [e^-,c^+]$. \label{axiom: 6}
\suspend{enumerate}
Furthermore, we need the dependence on $\beta$ to be smooth enough, that is, we suppose
\resume{enumerate}[{[${(\mathcal A}1)$]}]
\item $(\beta,\theta,x)\mapsto f_{\beta}(\theta,x)$ and $(\beta,\theta,x)\mapsto\d_x f_{\beta}(\theta,x)$ as well as 
$(\beta,\theta,x)\mapsto\d_\vartheta f_{\beta}(\theta,x)$ for $\vartheta \in \mathbb S^{d-1}$ are continuous. \label{axiom: 7} 
\suspend{enumerate}
Both assumptions are trivially fulfilled by (\ref{eq: arctan}).
As we want to restrict ourselves to $\T^d\times [e^-,c^+]$, we exclude parameters $\beta$ for which obviously every orbit leaves $\T^d\times [e^-,c^+]$. In other words, we only consider parameters not bigger than
$\beta_+(0)\= \min\left\{\beta \in [0,1]\left|\exists \theta \in 
\T^1 \: f_{\beta,\theta}(c^+) = e^-\right. \right\}$.
On the other hand, as we want the sink-source orbit to basically stay in the contracting region $\T^d\times C$ for negative times while we want it in the expanding region $\T^d\times E$ for positive times, we need to ensure that there is a connection between the two regions. Therefore, we only consider parameters $\beta$ not too small in order to make it possible to jump from one region to the other. That is, we deal with parameters not smaller than
$\beta_-(0)\=\max \left\{\beta \in [0,\beta_+(0)]\left| \forall \theta\in \T^1 \: f_{\beta,\theta}(c^-)\geq e^+ \right.\right\}$. Note that we don't need to compute
$\beta_\pm(0)$ for (\ref{eq: arctan}) explicitly. Instead, it suffices to know that $\beta_\pm(0)\in (0,1)$, which is true for obvious reasons.

Setting $\mathscr B(0)\=[\beta_-(0), \beta_+(0)]$, we hence only consider $\beta \in \mathscr B(0)$ from now on.
For each such $\beta$ there is a so-called (first) \emph{critical region}, $\mc I_{0,\beta} \ssq \T^d$ such that outside of $\I_{0,\beta}$, orbits in the contracting region stay in the contracting region.
\resume{enumerate}[{[${(\mathcal A}1)$]}]
 \item $f_{\beta,\theta}\left(x\right)\in C$ for all $x\in [e^+,c^+],\theta \notin 
\I_{0,\beta}$. \label{axiom: 8}
\setcounter{mycount}{\theenumi}
\end{enumerate}
By means of the monotonicity in $(\mc A \ref{axiom: 3})$
and by $(\mc A \ref{axiom: 4})$, this is equivalent to
\begin{enumerate}[${(\mathcal A}1')$]
\setcounter{enumi}{7}
\item $f^{-1}_{\beta,\theta}\left(x\right)\in  E$ for all $x\in [e^-,c^-],\theta \notin \I_{0,\beta}+\w$.\label{axiom: 8 prime}
\end{enumerate}
Notice that a priori we did not assume invertibility of $f_\beta$. However, due to
the inverse function theorem and 
${(\mathcal A}\ref{axiom: 3})$, we have that for a small open neighbourhood
$U$ of $\T^d\times [e^-,c^+]$ the map
$\left({f_\beta}_{|_{U}}\right)^{-1}$ is well-defined and $\mc C^2$. We will refer to it simply as $f_\beta^{-1}$. Observe that it also verifies ${(\mathcal A}\ref{axiom: 7})$.

In general, a natural choice for the critical region is given by
\begin{align*}
\mc I_{0,\beta}\=\left\{\theta\in \T^d\: f_{\beta,\theta}(e^+)\leq c^-\right\},
\end{align*}
which verifies ${(\mathcal A}\ref{axiom: 8})$ by definition.
In the particular case of (\ref{eq: arctan}), this choice reads
\begin{align}\label{eq: I0 arctan}
 \mc I_{0,\beta}\=\left\{\theta \in \T^1\: \cos 2\pi\theta\geq
\frac4\pi\cdot(\arctan r-c^-)/\beta-1 \right\}.
\end{align}

The critical region $\I_{0,\beta}$
allows jumps from the contracting to the expanding region and vice versa.
On the other hand, we also want the sink-source orbit to spend long times in the respective regions without jumping out too often, that is, we don't want $\I_{0,\beta}$ to be too big.

In (\ref{eq: I0 arctan}), we see that by choosing large $r$ and small $c^-$, we can make $\I_{0,\beta}$ arbitrarily small for large enough $\alpha$. This results from the fact that the second derivative
$\d_\theta^2f_{\beta,\theta}(x)=\beta \pi^3 \cdot \cos 2\pi \theta$ is bounded away from $0$ on the \emph{interval} $\I_{0,\beta}$.
In general, we thus assume there exists $s>0$ such that
\begin{enumerate}[${(\mathcal A}1)$]
\setcounter{enumi}{\themycount}
\item $\d_\vartheta^2 f_{\beta,\theta}(x)>s$ for each $\vartheta \in {\mathbb S}^{d-1}$ and $\theta \in \I_{0,\beta}, x\in C, \beta \in \mathscr B(0)$,
\label{axiom: 9}
\item $\I_{0,\beta}$ is closed and \emph{convex} and $\I_{0,\beta}\ssq \I_{0,\beta'}$ for $\beta\leq \beta'$. \label{axiom: 10}
\suspend{enumerate}
To motivate further assumptions, we need to provide a rough sketch of how to prove the existence of a sink-source orbit.
Assuming that $\I_{0,\beta}$ is small, there is a positive number
$M_0$ such that the first $M_0$ forward and backward iterates of $\I_{0,\beta}+\w$ under the base transformation (that is, under the rigid rotation with rotation vector $\w$) don't intersect, that is,
\begin{align*}
 \I_{0,\beta}+\w \cap \bigcup_{k=\pm 1,\ldots,\pm M_0} \left(\I_{0,\beta}+(k+1)\w\right)=\emptyset.
\end{align*}
If this is true,
$f^{l}_\beta(\theta,x)$ never leaves the contracting region for $\theta \in \I_{0,\beta}-(M_0-1)\w, x \in C$ and $l=0,\ldots,M_0-1$, while
$f^{-l}_\beta(\theta,x)$ never leaves the expanding region for
$\theta \in \I_{0,\beta}+(M_0+1)\w, x \in E$ and $l=0,\ldots,M_0$, due to 
${(\mathcal A}\ref{axiom: 8})$ and ${(\mathcal A}\ref{axiom: 8 prime}')$. However, 
$f^{M_0-1}_\beta(\theta,x)$ might jump into the expanding region under the action of $f_\beta$ or, even more, fall into the set $f^{-M_0}_\beta\left(\I_{0,\beta}+(M_0+1)\w,E\right)$. In the latter case,
$f^{M_0-1}_\beta(\theta,x)$ is a first candidate for a sink-source orbit as it stays in the expanding region for $M_0+1$ times while its backward iterates stay in the contracting region for $M_0-1$ times.

The projection of the set of all such sink-source orbit candidates to the base $\T^d$ is denoted by $\I_{1,\beta}$.
Similarly as in the case of $\I_{0,\beta}$, we need that $\I_{1,\beta}$ is small enough to guarantee that it visits itself with an even smaller frequency than $\I_{0,\beta}$. 
To that end, we need that the second derivatives of
$\phi^\pm(\theta)\=f_{\beta,\theta-M_0\w}^{M_0+1}(c^\pm)$ and $\psi^\pm(\theta)=f_{\beta,\theta+M_0\w}^{-M_0-1}(e^\pm)$ (for $\theta \in \I_{0,\beta}+\w$)
with respect to $\theta$ are small in comparison to the lower bound $s$ in 
${(\mathcal A}\ref{axiom: 9})$ (such that the second derivatives of $\phi^\pm$ and $\psi^\pm$ with respect to $\theta$ are basically bounded from below by $s$ as well).
This amounts to keeping all the other derivatives of $f_\beta$ and and its inverse
small. Let $S>0$ be such that
\resume{enumerate}[{[${(\mathcal A}1)$]}]
\item $\left|\d_\vartheta f_{\beta,\theta}(x)\right|<S$
for all $(\theta,x) \in \T^d\times [e^-,c^+] $ and $\vartheta \in \mathbb S^{d-1}
$\label{axiom: 11},
\item $\left|\d_\vartheta^2 f_{\beta,\theta}(x)\right|<S^2$
for all $(\theta,x) \in \T^d\times [e^-,c^+] $ and $\vartheta \in \mathbb S^{d-1}$,
\label{axiom: 12}
\item $ \left|\d_\vartheta\d_x f_{\beta,\theta}(x)\right|<
\begin{cases}
S \alpha_c & \text{for } (\theta,x) \in \T^d\times C \\
S\alpha_u^2 & \text{for } (\theta,x)\in \T^d\times [e^-,c^-)
\end{cases}$ for each $\vartheta \in \mathbb S^{d-1}.$ \label{axiom: 13}
\suspend{enumerate}
Further, suppose
\resume{enumerate}[{[${(\mathcal A}1)$]}]
\item $\left|\d_x^2 f_{\beta,\theta}(x)\right|<
\begin{cases}
\alpha_c & \text{for } (\theta,x)\in \T^d\times C \\
\alpha_u^2 & \text{for } (\theta,x)\in \T^d\times [e^-,c^-)
\end{cases}$. \label{axiom: 14}
\suspend{enumerate}
For the derivatives of the inverse, we get some of the above estimates by means of the inverse function theorem. However, we additionally need
\resume{enumerate}[{[${(\mathcal A}1)$]}]
\item $\left|\d_x^2 f_{\beta,\theta}^{-1}(x)\right|< \alpha_e^{-1}$
for each
$\theta\notin \I_{0,\beta}+\w$ and $x\in E$,\label{axiom: 15}
\item $\left|\d_\vartheta\d_x f_{\beta,\theta}^{-1}(x)\right|< S \alpha_e^{-1}$
for each $\theta\notin \I_{0,\beta}+\w, x\in E$ and $\vartheta \in \mathbb S^{d-1}$.\label{axiom: 16}
\end{enumerate}
Coming back to (\ref{eq: arctan}), we get
${(\mathcal A}\ref{axiom: 11})$ and
${(\mathcal A}\ref{axiom: 12})$ by setting $S\= \max_{\beta,\theta,x} \d_\theta f_{\beta,\theta}(x)=\pi^2/2$. ${(\mathcal A}\ref{axiom: 13})$ is trivial, as mixed derivatives vanish. With
\begin{align*}
 \d_x^2 f_{\beta,\theta}(x)=\frac{-2\alpha^3x}{\left(1+(\alpha x)^2\right)^2},
\end{align*}
we get $\d_x^2 f_{\beta,\theta}(x)< \alpha^{-2/p}$ for big enough $\alpha$ and $x\in C$. Further, basic calculus yields
$\left|\d_x^2 f_{\beta,\theta}(x)\right|\leq \d_x^2 f_{\beta,\theta}\left(\sqrt{1/(3\alpha^2)}\right)=\mc O(\alpha^2)$ as $\alpha\to \infty$. This shows $(\mc A\ref{axiom: 14})$ for big enough $\alpha$. If
$x\in E$ and $\theta \notin \I_0+\w$, we moreover have
\begin{align*}
\d_x^2 f_{\beta,\theta}^{-1}(x)&=
2/\alpha \cdot\frac{\sin\left(x+\beta \cdot \frac\pi4 (\cos2\pi (\theta-\w)+1)\right)}
 {\cos^{3}\left(x+\beta \cdot \frac\pi4 (\cos2\pi (\theta-\w)+1)\right)},\\
\d_\theta \d_x f_{\beta,\theta}^{-1}(x)&=
-\beta \pi^2/\alpha \cdot\frac{\sin2\pi (\theta-\w) \cdot \sin\left(x+\beta \cdot \frac\pi4 (\cos2\pi (\theta-\w)+1)\right)}
 {\cos^{3}\left(x+\beta \cdot \frac\pi4 (\cos2\pi (\theta-\w)+1)\right)}.
\end{align*}
As $\theta \notin \I_{0,\beta}+\w$, (\ref{eq: I0 arctan}) yields
$\beta \cdot \frac\pi4 (\cos2\pi (\theta-\w)+1)<\arctan r -c^-$ which proves $(\A\ref{axiom: 15})$ and $(\A\ref{axiom: 16})$ for large enough $\alpha$, since
$0\leq x\leq r/\alpha$.

We are now in a position to state the main theorem of this article. 
\begin{thm}\label{thm: sink source orbit introduction}
Suppose $\w$ is Diophantine of type $(\mathscr C,\eta)$ and $\left(f_\beta\right)_{\beta\in [0,1]}$ satisfies 
$(\A\ref{axiom: 1})$-$(\A\ref{axiom: 16})$. 
Let there be $p\geq \sqrt{2}, \alpha>1$ with
\begin{align*}
 \alpha_c^{-1}=\alpha_e = \alpha^{\frac2{p}}, \qquad \alpha_l^{-1}=\alpha_u = \alpha^p.
\end{align*}
Then there exist strictly positive constants $\eps_0=\eps_0(p,\mathscr C,\eta)$ and
$\alpha_0=\alpha_0(s,S,p,|C|,|E|,\mathscr C,\eta)$ such that if $|\I_{0,\beta_+(0)}|<\eps_0$ and $\alpha>\alpha_0$, there is $\beta_c\in [0,1]$ such that $f_{\beta_c}$ has a sink-source orbit, and hence an SNA and an SNR in $\T^d\times[e^-,c^+]$.
\end{thm}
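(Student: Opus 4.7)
The plan is to construct a sink-source orbit via an inductive multi-scale argument: build a nested sequence of closed sets $\I_{n,\beta}\ssq \T^d$ with $|\I_{n,\beta}|$ shrinking geometrically, such that outside $\I_{n,\beta}$ the forward orbit of points in $C$ stays contracting, and outside $\I_{n,\beta}+\w$ the backward orbit of points in $E$ stays expanding, for an increasing time horizon $M_n$. A simultaneous parameter-monotonicity argument then selects $\beta_c$ at which the two behaviours meet on an orbit.

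First, fix $\beta\in \mathscr B(0)$. From $|\I_{0,\beta}|<\eps_0$ and the Diophantine bound $d(k\w,0)\geq \mathscr C|k|^{-\eta}$, extract the largest $M_0$ such that the translates $\I_{0,\beta}+k\w$, $|k|\leq M_0$, are pairwise disjoint; hence by $(\A\ref{axiom: 8})$ and $(\A\ref{axiom: 8 prime}')$, for $\theta\notin \bigcup_{|k|\leq M_0-1}(\I_{0,\beta}-k\w)$ the forward orbit from $C$ stays in $C$ for $M_0$ steps, and analogously for the inverse on $E$. Define $\I_{1,\beta}$ as the subset of $\theta\in \I_{0,\beta}$ for which the image $f^{M_0+1}_{\beta,\theta-M_0\w}(C)$ actually enters the backward-expanding window $f^{-M_0-1}_{\beta,\theta'+M_0\w}(E)$ above some $\theta'\in \I_{0,\beta}+\w$. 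Controlling $|\I_{1,\beta}|$ is the central quantitative step: use $(\A\ref{axiom: 9})$ to obtain a lower bound of order $s$ on $\d_\vartheta^2$ of the graphs $\phi^\pm(\theta)=f^{M_0+1}_{\beta,\theta-M_0\w}(c^\pm)$ and $\psi^\pm(\theta)=f^{-M_0-1}_{\beta,\theta+M_0\w}(e^\pm)$ on $\I_{0,\beta}+\w$, while the asymmetric mixed- and second-derivative bounds in $(\A\ref{axiom: 11})$--$(\A\ref{axiom: 16})$ guarantee that chain-rule contributions from the $M_0$ contracting iterates do not spoil this lower bound. The convexity of $\I_{0,\beta}$ from $(\A\ref{axiom: 10})$ then lets the standard sublevel-set estimate $|\{\phi^\pm-\psi^\pm\leq 0\}|\lesssim \sqrt{|\phi^\pm(\theta_0)-\psi^\pm(\theta_0)|/s}$ yield $|\I_{1,\beta}|\ll |\I_{0,\beta}|$ provided $\alpha>\alpha_0$.

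Iterate: at stage $n$ one obtains $\I_{n+1,\beta}\ssq \I_{n,\beta}$ with $|\I_{n+1,\beta}|$ decaying geometrically and a strictly larger horizon $M_{n+1}$. The exponent restriction $p\geq\sqrt 2$ is what makes the decay rate outpace the growth of $M_n$ forced by the Diophantine estimate, so that the construction is self-sustaining. In parallel, define $\mathscr B(n)=[\beta_-(n),\beta_+(n)]\ssq \mathscr B(n-1)$ as the parameters for which the stage-$n$ "connection" between $C$ and $E$ through $\I_{n,\beta}$ still exists; monotonicity $(\A\ref{axiom: 6})$ and continuity $(\A\ref{axiom: 7})$ force the $\mathscr B(n)$ to be nested closed intervals, hence $\bigcap_n \mathscr B(n)\neq \emptyset$. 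Pick any $\beta_c$ in this intersection. By compactness, choose $\theta_*\in \bigcap_n \I_{n,\beta_c}$ and a corresponding $x_*\in [e^-,c^+]$ lying on all stage-$n$ connections; then forward iterates of $(\theta_*,x_*)$ spend proportion $\to 1$ of their time in $E$ and backward iterates spend proportion $\to 1$ of their time in $C$, which by $(\A\ref{axiom: 1})$--$(\A\ref{axiom: 3})$ yields $\limsup \frac1n\log|\d_x f_{\beta_c,\theta_*}^{\pm n}(x_*)|>0$. Theorem~\ref{thm: sink source orbit <-> SNA} then produces the SNA and SNR in $\T^d\times[e^-,c^+]$.

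The main obstacle is the stage-$n$ second-derivative bookkeeping: one must propagate a lower bound on $\d_\vartheta^2$ of the $(M_n+1)$-step images of the boundary graphs through long compositions while only the mixed derivative $\d_\vartheta\d_x f_\beta$ in the contracting part is cheap $(S\alpha_c)$ and the one in the critical zone is expensive $(S\alpha_u^2)$. Making the bookkeeping uniform in $\beta\in \mathscr B(n)$ so that $\mathscr B(n)$ does not degenerate, and arranging the Diophantine-induced return constants $\eps_0(p,\mathscr C,\eta)$ and $\alpha_0$ to close the induction at every scale, is where the real work lies; everything else follows from standard monotonicity, compactness, and the invocation of Theorem~\ref{thm: sink source orbit <-> SNA}.
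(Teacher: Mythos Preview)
Your sketch is correct and follows essentially the same multiscale scheme as the paper: inductively defined critical regions $\I_{n,\beta}$, the second-derivative lower bound on $\phi_n^\pm-\psi_n^\mp$ via $(\A9)$ with chain-rule error terms controlled by $(\A11)$--$(\A16)$, the sublevel-set estimate for $|\I_{n+1,\beta}|$, nested closed parameter intervals $\mathscr B(n)$, and the final compactness/nesting argument for the sink-source point. One small correction: the proportion of time the constructed orbit spends in $E$ (resp.\ $C$) does not tend to $1$ but only to some $b=\prod_j(1-1/K_j)<1$; the induction is arranged so that $b$ is close enough to $1$ that $\alpha_e^{b}\alpha_l^{1-b}>1$ and $\alpha_c^{b}\alpha_u^{1-b}<1$, which is what actually yields the positive forward and backward Lyapunov exponents.
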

Note that together with the previous discussion, this theorem proves the occurrence of a non-smooth bifurcation for the example in the introduction.
\begin{rem}
\begin{enumerate}[(i)]
\item $\alpha_0$ can be chosen to be monotonously increasing in $|C|$ and $|E|$.
\item The conjugacy $(\theta,x)\mapsto (\theta,-x)$ and the parametrisation
$\beta\mapsto 1-\beta$ yield a symmetric version of the theorem if
the contracting region is below the expanding one, that is, if $c^+<e^-$.
\end{enumerate}
\end{rem}
Theorem~\ref{thm: sink source orbit introduction} is proved in Section~\ref{sect: proof of the main theorem} by showing the existence of a sink-source orbit in $\T^d\times[e^-,c^+]$. Due to Theorem~\ref{thm: sink source orbit <-> SNA}, this yields the existence of an SNA and an SNR. Setting $b=c^+$ and changing the system for $x<e^-$ in such a way that  every point below $e^-$ approaches $a$ for $n\to \infty$, we see that the respective SNA/SNR pair is in fact contained in $\T^d\times[e^-,c^+]$.

It is important to note that the assumptions of Theorem~\ref{thm: sink source orbit introduction} are stable under $\mc C^2$-small perturbations of the fibre maps $f_{\beta,\theta}$ which respect $(\A\ref{axiom: 6})$ and $(\A\ref{axiom: 7})$. 
This is the main advantage over previous results in this direction,
which establish the existence of an SNA only under comparably strong technical constraints: In \cite[Theorem~2.7]{JägerAMS} it is necessary to assume the existence of a ``sharp peak'' for the maps $\phi^\pm$, which implies non-differentiability of $f_\beta$ with respect to the base coordinates.\footnote{Note that the positive lower bound for $\d_\vartheta^2 f_{\beta,\theta}(x)$ in $(\A\ref{axiom: 9})$ can be understood as a replacement for this sharp peak assumption  in the respective statement in \cite{JägerAMS}.}

An important step towards the understanding of the creation of SNAs was the verification of a non-smooth saddle-node bifurcation for the Harper map
\begin{align*}
 (\theta,x)\mapsto \left(\theta+\w, \arctan\left(\frac{-1}{\tan(x)-E+\lam 
V(\theta)}\right)\right),
\end{align*} 
which is closely related to the discrete quasi-periodic Schrödinger equation.
In \cite{Bjerklöv} it is shown that
if the ``potential'' $V$ is $\mc C^2$ and if it assumes its unique global maximum at a point with non-vanishing second derivative, then we observe a non-smooth saddle node-bifurcation upon a decrease of $E$ if $\lam$ is large enough.

The geometric idea of our proof is inspired by the proof in \cite{Bjerklöv} as can be 
readily seen from the pictures in Figure~\ref{fig: geometric idea}.
It is thus not surprising that we can recover Bjerklöv's result with the same 
regularity assumptions.\footnote{The application of 
Theorem~\ref{thm: sink source orbit introduction} to the Harper map works by means of a 
similar argument as in \cite[Section~2.4.2]{JägerAMS}.
However, it is necessary to control the 
dependence of $\alpha_0$ on $s$ and $S$ in this particular case.
To that end, we provide a slightly different formulation of the above theorem (cf. Theorem~\ref{thm: sink-source orbit refined}), which specifies the relationship between $\alpha$ and $s$ as well as $S$ in an appropriate way.}
However, as we don't restrict to fibre maps of a particular shape, more work is needed 
in order to get control over the sink-source orbit.

Despite the fact that $(\A\ref{axiom: 1})-(\A\ref{axiom: 16})$ seem rather technical, they just capture the main qualitative properties of some standard examples which posses an SNA and turn out to be flexible enough to treat different skew-product families at the same time. We have seen that (\ref{eq: arctan}) verifies the assumptions of Theorem~\ref{thm: sink source orbit introduction}.
As a generalisation of the $\arctan$-family (\ref{eq: arctan}), for each $q>1$ we can apply Theorem~\ref{thm: sink source orbit introduction} to
\begin{align*}
(\theta,x)  & \mapsto \left(\theta+\w, h_q(\alpha x)-\beta\cdot \frac{h_q(\infty)}2 (1+\cos 2\pi\theta)\right),
\end{align*}
where $h_q(x)\= \sgn (x)\cdot\tilde h_q(|x|)$ with
$\tilde h_q(x)\= \int_0^x \! (1+\zeta^q)^{-1}\, d\zeta$, which can be seen similarly as for (\ref{eq: arctan}).
Analogously, we obtain a non-smooth saddle-node bifurcation for the family
\begin{align*}
 (\theta,x)\mapsto h_q(\alpha x)-2\beta-\frac{1+\sin2\pi\theta}{2},
\end{align*}
which has been considered numerically for $q=2$ in \cite{Anagnostopoulou}, for example.

\begin{rem}
The assumption that $\alpha_c^{-1}=\alpha_e = \alpha^{\frac2{p}}$ and $\alpha_l^{-1}=\alpha_u = \alpha^p$
is only for technical reasons. It basically originates from the fact that we defined 
$\I_{1,\beta}$ in a symmetric way, that is, we considered
the intersection of the $M_0^--th$ iterate of $\I_0-(M_0^--1)\w \times C$ and
the $M_0^+$-th inverse iterate of $\I_0+(M_0^++1)\w \times E$ with $M_0^+=M_0^-$ (cf. Definition~\ref{def: critical regions}).
By allowing different relations between $M_0^+$ and $M_0^-$, we could also allow different scaling behaviour in order to apply a similar statement like
Theorem~\ref{thm: sink source orbit introduction} to
$(\theta,x)\mapsto \left(\theta+\w,\tanh(\alpha x)-\beta (1+\cos(2\pi \theta))\right)$, for example, where the ratio of $\alpha_l^{-1}/ \alpha_u$ grows exponentially with $\alpha$. 
\end{rem}

\begin{rem}
Combining Theorem~\ref{thm: concave fibre maps Keller},
Theorem~\ref{thm: anagnostopoulou}, and Theorem~\ref{thm: sink source orbit introduction}, we straightforwardly get conditions which guarantee the occurrence of a non-smooth saddle-node bifurcation. However, it is worth mentioning that besides some minor technical hypothesis, the convexity assumption of Theorem~\ref{thm: anagnostopoulou} is not needed in Theorem~\ref{thm: sink source orbit introduction}. In other words: The existence of an SNA is in a sense independent of the saddle-node bifurcation framework.
\end{rem}
\section{Proof of Theorem~\ref{thm: sink source orbit introduction}}\label{sect: proof of the main theorem}
In this section, we prove Theorem~\ref{thm: sink source orbit introduction} by showing
that there is a point $(\theta,x)$ whose positive iterates mostly stay in the expanding region, while its negative iterates mostly stay in the contracting region. This can be achieved if the frequency of the jumps from one region to the other is small enough, which is the idea behind the inductive assumptions $(\mc F1)_n$ and $(\mc F2)_n$ (Section~\ref{sect: combinatorical considerations}). These are basically hypothesis on the size of the inductively defined \emph{critical intervals} $\mc I_n$. By a geometrical argument, we will get upper bounds for these quantities in Section~\ref{sect: geometric considerations}.
In Section~\ref{sect: critical intervals}, we eventually show that these upper bounds decrease fast enough to guarantee the existence of an SNA.

\subsection{Combinatorial considerations - The basic mechanism}
\label{sect: combinatorical considerations}
We make use of $(\mc A \ref{axiom: 1})$-$(\mc A \ref{axiom: 4})$ and $(\mc A \ref{axiom: 8})$ to estimate the vertical growth rate of orbits which converge to a sink-source orbit. In order to achieve this, we need to assume some additional inductive assumptions.
The verification of these additional assumptions is the goal of the subsequent sections.
As a matter of fact, the statements of this section are basically provided in \cite{Bjerklöv, Jäger}, already. For the convenience of the reader and as there are some subtle technical differences, we nevertheless include some of the proofs.

In the following, let $\left (M_n\right)_{n\in \N_0},
\left (K_n\right)_{n\in \N_0} \in \N^{\N_0}$  be strictly increasing sequences (independent of $\beta$) with $M_0\geq 2$.
\begin{defn}\label{def: critical regions}
Suppose we have already
defined the 
$n$-th \emph{critical region} 
$\mc I_{n,\beta}$. 
Set
\begin{itemize}
 \item $\mc A_{n,\beta} \=\left (\mc I_{n,\beta} - (M_n-1)\w\right)\times C$,
 \item $\mc B_{n,\beta} \=\left(\mc I_{n,\beta} + (M_n+1)\w\right)\times E$,
 \item $\mc I_{n+1,\beta}\= 
\pi_1 \left(f_\beta^{M_n-1} (\A_{n,\beta})\cap f_\beta^{-(M_n+1)}(\B_{n,\beta})\right)$.
\end{itemize}
\end{defn}
\begin{rem}
It is obvious that $\I_{n+1,\beta}\ssq \I_{n,\beta}\ (n\in \N_0)$.
However, note that $\mc I_{n+1,\beta}$ might be empty even
if $\I_{n,\beta}\neq \emptyset$.
\end{rem}
For fixed $N\in \N$, we will only consider such $\beta \in \mathscr B(0)$ with
$f^{M_n-1}_{\beta,\theta_{-\left(M_n-1\right)}}(c^+)\geq f^{-\left(M_n+1\right)}_{\beta,\theta_{M_n+1}}(e^-)$ for each $\theta \in \I_{n}$ and $0\leq n\leq N-1$. We denote the set of these $\beta$ by $\tilde{\mathscr B}(N)$ and set
$\tilde{\mathscr B}(0)\={\mathscr B}(0)$. 

Occasionally, we might suppress the index $\beta$.
For $n\in \N_0$, set $\mc Z^-_n\= \bigcup_{j=0}^{n} \bigcup_{l=-(M_j-2)}^{0}\mc I_j+l\w$; 
$\mc Z^+_n\= \bigcup_{j=0}^{n} \bigcup_{l=1}^{M_j}\mc I_j+l\w$;
$\mc V_n  \= \bigcup_{j=0}^{n} \bigcup_{l=1}^{M_j+1}\mc I_j+l\w$;
$\mc W_n \= \bigcup_{j=0}^{n} \bigcup_{l=-(M_j-1)}^{0}\mc I_j+l\w$. Moreover, set
$\I_{-1,\beta}=\T^d$ and
$\mc Z^-_{-1},\mc Z^+_{-1},\mc V_{-1},\mc W_{-1}=\emptyset$.

In order to be able to control an orbit, we do not want it to visit the critical regions
too often. We therefore need to assume that the critical regions are small enough.
\begin{defn}
We say $f_\beta$ verifies $(\mc F1)_n$ and $(\mc F2)_n$, respectively if 
\begin{enumerate}[$(\mc F1)_n$]
 \item $\mc I_{j,\beta} \cap \bigcup_{k=1}^{2K_jM_j} \mc I_{j,\beta} +k\w= \emptyset$, \label{axiom: diophantine 1}
 \item $\left(\mc I_{j,\beta}- (M_j-1)\w \cup \mc I_{j,\beta}+ (M_j+1)\w\right) \bigcap
\left ( \mc V_{j-1} \cup \mc W_{j-1}\right)
=\emptyset$, \label{axiom: diophantine 2}
\end{enumerate}
for $j=0,\ldots,n$ and $n\in \N_0$. If $f_\beta$ satisfies both
$(\mc F1)_n$ and $(\mc F2)_n$, we say $f_\beta$ satisfies
$(\mc F)_n$. It is convenient to set $(\mc F)_{-1}$ to be true.
\end{defn}
For $\theta\in \T^d$, we denote by $\mc L_m,\mc R_m \in \N_0\cup \{\infty\}$
the smallest integers $l,r$ with $\theta_{l}\in \I_m$ and $\theta_{-r}\in \I_m+\w$, respectively.
\begin{lem}[cf. {\cite[Lemma~3.4]{Jäger}}]
\label{lem: duration of stay in contracting/expanding regions}
Let $n\in \N_0$. Suppose $f_\beta$ satisfies $(\mc A\ref{axiom: 3})$, $(\mc A\ref{axiom: 4})$ and $(\mc A\ref{axiom: 8})$ as well as $(\mc F)_{n-1}$ with
$\beta \in \tilde{\mathscr B}(n)$
and assume
\begin{align}\tag*{$(\mc B1)_n$} \label{axiom: 21}
\begin{cases}
x \in C\\
\theta \notin \mc Z^-_{n-1}.
\end{cases}
\end{align}
Furthermore, let $0<\mc L^{(1)}<\ldots <\mc L^{(N)}=\mc L_n$ be all those 
times $m\leq \mc L_n$ for which $\theta+m \w \in \I_{n-1}$. Then
$(\theta_{\mc L^{(i)}+M_{n-1}+2},x_{\mc L^{(i)}+M_{n-1}+2})$ satisfies
$(\mc B1)_n$ for each $i=1,\ldots,N-1$ and the following implication holds
\begin{align}\tag*{$(\mc C1)_n$}\label{implication: x not in C for small times..}
 x_k \notin C \Rightarrow \theta_{k} \in \mc V_{n-1} \text{ and } x_k \in [e^-,c^-]
\quad (k = 1,\ldots, \mc L_n).
\end{align}

Analogously backwards: Instead of \ref{axiom: 21}, assume
\begin{align}\tag*{$(\mc B2)_n$} \label{axiom: 22}
\begin{cases}
x \in E\\
\theta \notin  \mc Z^+_{n-1}
\end{cases}
\end{align}
and let $0<\mc R^{(1)}<\ldots <\mc R^{(N)}=\mc R_n$ be all those 
times $m\leq \mc R_n$ for which $\theta-m \w \in \I_{n-1}$.
Then $(\theta_{-\mc R^{(i)}-M_{n-1}},x_{-\mc R^{(i)}-M_{n-1}})$ satisfies
$(\mc B2)_n$ for each $i=1,\ldots,N-1$ and the following implication holds
\begin{align}\tag*{$(\mc C2)_n$}\label{implication: x not in E for small negative times..} x_{-k} \notin E \Rightarrow \theta_{-k} \in \mc W_{n-1} \text{ and } x_{-k} \in [e^+,c^+] \quad (k= 1,\ldots, \mc R_n).
\end{align}
\end{lem}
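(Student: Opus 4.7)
The plan is to prove the lemma by induction on $n$; I treat the forward statement, the backward one being obtained by the symmetric argument for $f^{-1}_\beta$, for which $(\A\ref{axiom: 15})$ and $(\A\ref{axiom: 16})$ provide the regularity required to rerun the proof on inverse iterates. The base case $n=0$ is immediate: the renewal claim carries no content (the convention $\mc I_{-1}=\T^d$ makes the statement vacuous between intermediate times), and $(\mc C1)_0$ reduces to the remark that minimality of $\mc L_0$ gives $\theta_k\notin \mc I_0$ for $0\le k<\mc L_0$, so iterated application of $(\A\ref{axiom: 8})$ starting from $x\in C$ keeps $x_k\in C$ throughout $0\le k\le \mc L_0$.

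For the inductive step at level $n\geq 1$, I set $T_0\=0$ and $T_i\=\mc L^{(i)}+M_{n-1}+2$ and partition $[0,\mc L_n]$ into \emph{long stretches} $[T_{i-1},\mc L^{(i)}]$ on which $\theta_k\notin \mc I_{n-1}$ and short \emph{excursions} $[\mc L^{(i)},T_i]$ of length $M_{n-1}+2$. On a long stretch the plan is to apply the inductive hypothesis $(\mc C1)_{n-1}$ at base point $(\theta_{T_{i-1}},x_{T_{i-1}})$: the prerequisite $\theta_{T_{i-1}}\notin \mc Z^-_{n-2}$ follows from $(\mc F2)_{n-1}$, since $\theta_{T_{i-1}}$ lies in $\mc I_{n-1}+(M_{n-1}+2)\omega$ and a membership in $\mc Z^-_{n-2}$ would place a small forward translate of $\theta_{T_{i-1}}$ inside some $\mc I_j$ with $j\le n-2$, contradicting the disjointness $(\mc I_{n-1}+(M_{n-1}+1)\omega)\cap \mc W_{n-2}=\emptyset$ stipulated there; the other prerequisite $x_{T_{i-1}}\in C$ will be delivered by the excursion analysis. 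Because $\mc V_{n-2}\ssq \mc V_{n-1}$, the conclusion of $(\mc C1)_{n-1}$ on this segment directly gives $(\mc C1)_n$.

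During an excursion $[\mc L^{(i)},T_i]$ with $i<N$, one has $\theta_{\mc L^{(i)}+l}\in \mc I_{n-1}+l\omega\ssq \mc V_{n-1}$ for $l=1,\ldots,M_{n-1}+1$, so the base-coordinate part of $(\mc C1)_n$ is automatic; the task is to confine the fibre coordinate to $[e^-,c^-]$ and to recover $x_{T_i}\in C$. My plan is to use $(\mc F2)_{n-1}$ to place $\theta_{\mc L^{(i)}-(M_{n-1}-1)}$ outside $\mc V_{n-2}$, so that the inductive hypothesis on the preceding long stretch forces $x_{\mc L^{(i)}-(M_{n-1}-1)}\in C$; equivalently, $(\theta_{\mc L^{(i)}},x_{\mc L^{(i)}})\in f_\beta^{M_{n-1}-1}(\mc A_{n-1})$. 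Since $i<N$ gives $\theta_{\mc L^{(i)}}\in \mc I_{n-1}\smin \mc I_n$, the definition of $\mc I_n$ forces $(\theta_{\mc L^{(i)}},x_{\mc L^{(i)}})\notin f_\beta^{-(M_{n-1}+1)}(\mc B_{n-1})$. Monotonicity of the fibre maps from $(\A\ref{axiom: 3})$ together with the arrangement $e^+<c^-$ then prevents the orbit from entering $E$ during the next $M_{n-1}+1$ iterates, confining it to $[e^-,c^-]$; that $x_{T_i}\in C$ follows from $(\A\ref{axiom: 8})$ since $\theta_{T_i-1}\notin \mc I_0$, again by $(\mc F2)_{n-1}$.

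The main obstacle is this excursion analysis: turning the negative set-theoretic statement $\theta_{\mc L^{(i)}}\notin \mc I_n$ into a usable confinement statement for the fibre coordinate requires one to unwind the definition of $\mc I_n$ as the projection of the intersection of a forward- and a backward-propagated rectangle, and to appeal to the restriction $\beta\in\tilde{\mathscr B}(n)$ to ensure that the two fibre intervals whose empty intersection encodes $\theta_{\mc L^{(i)}}\notin \mc I_n$ are ordered so that $x_{\mc L^{(i)}}$ lies on the ``contracting side'' rather than the ``expanding side''. Once this has been verified, the long-stretch and excursion arguments chain together across $i=1,\ldots,N$ and close the induction.
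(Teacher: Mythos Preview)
Your plan is essentially the paper's own argument: induction on $n$, with each step broken into ``long stretches'' on which $(\mc C1)_{n-1}$ applies by the inductive hypothesis, and ``excursions'' of length $M_{n-1}+2$ across visits to $\mc I_{n-1}$, during which the definition of $\mc I_n$ together with $\beta\in\tilde{\mathscr B}(n)$ pins down the fibre coordinate. The identification of the ordering of the two fibre intervals (forward image of $C$ versus backward image of $E$) as the crux, and the use of $\tilde{\mathscr B}(n)$ to resolve it, is exactly how the paper proceeds.

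Two small corrections. First, the backward statement needs no appeal to $(\A\ref{axiom: 15})$ or $(\A\ref{axiom: 16})$: those are second-derivative bounds used only later in Lemma~\ref{lem: nu n}. The symmetric argument for $f_\beta^{-1}$ rests solely on $(\A\ref{axiom: 8}')$, which the paper derives from $(\A\ref{axiom: 3})$, $(\A\ref{axiom: 4})$ and $(\A\ref{axiom: 8})$ via the inverse function theorem. Second, your phrase ``prevents the orbit from entering $E$ during the next $M_{n-1}+1$ iterates'' overstates what is true and what is needed: the orbit may well dip into $E$ at intermediate times. What one actually shows is that $x_{\mc L^{(i)}}\geq f^{-(M_{n-1}+1)}_{\beta,\theta_{\mc L^{(i)}+M_{n-1}+1}}(e^-)$, whence $(\A\ref{axiom: 4})$ and monotonicity give $x_k\in[e^-,c^+]$ for $k=\mc L^{(i)},\ldots,\mc L^{(i)}+M_{n-1}+1$; and separately that $x_{\mc L^{(i)}+M_{n-1}+1}\notin E$ (else $\theta_{\mc L^{(i)}}\in\mc I_n$), so that $(\A\ref{axiom: 8})$ puts $x_{T_i}$ back in $C$. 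This is enough for $(\mc C1)_n$ since $E\subset[e^-,c^-]$.
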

\begin{proof}
We only consider the forward case; the other case works similarly.
Note that for $n=0$ the statement
is true due to ${(\mathcal A\ref{axiom: 8})}$.

Assume the statement holds for $n_0 \in \N$ and assume $(\theta,x)$ satisfies
$(\mc B1)_{n_0+1}$. 
Trivially, $\left(\mc B1\right)_{n_0+1}$ implies $\left(\mc B1\right)_{n_0}$ such that
$x_k \notin C \Rightarrow \theta_{k} \in \mc V_{n-1}$ and $x_k \in [e^-,c^+]$
for $k\leq \mc L^{(1)}$. Notice that 
$\left(\mc I_{n_0}-(M_{n_0}-1)\w\right)\cap \mc V_{n_0-1} =\emptyset$ because of $(\mc F\ref{axiom: diophantine 2})_{n_0}$. Hence,
$\left(\theta_{\mc L^{(1)}-(M_{n_0}-1)}, x_{\mc L^{(1)}-(M_{n_0}-1)}\right) \in \mc A_{n_0}$ due to $(\mathcal C1)_{n_0}$.
As $\beta \in \tilde{\mathscr B}(n)$, we further have
$f^{M_{n_0}-1}_{\beta,\theta_{\mc L^{(1)}-M_{n_0}+1}}(c^+)\geq f^{-\left( M_{n_0}+1\right)}_{\beta,\theta_{\mc L^{(1)}+M_{n_0}+1}}(e^-)$.
If we had $x_{\mc L^{(1)}}\leq f^{-\left( M_{n_0}+1\right)}_{\beta,\theta_{\mc L^{(1)}+M_{n_0}+1}}(e^-)$, this would imply there exists $y \in [x_{\mc L^{(1)}-M_{n_0}+1},c^+]\ssq [c^-,c^+]$ with
$f^{M_{n_0}-1}_{\beta}(\theta_{\mc L^{(1)}-M_{n_0}+1},y) \in f^{-\left( M_{n_0}+1\right)}_{\beta}\left(\left\{\theta_{\mc L^{(1)}+M_{n_0}+1}\right\}\times E\right)$ meaning that $\theta_{\mc L^{(1)}} \in \mc I_{n,\beta}$, which contradicts the assumptions.
Therefore, $x_{\mc L^{(1)}}\geq f^{-\left( M_{n_0}+1\right)}_{\beta,\theta_{\mc L^{(1)}+M_{n_0}+1}}(e^-)$. 
By 
${(\mathcal A\ref{axiom: 4})}$ and the monotonicity, we thus have $x_k \in [e^-,c^+]$
for $k= \mc L^{(1)},\ldots,\mc L^{(1)}+M_{n_0}+1$. Now, $x_{\mc L^{(1)}+M_{n_0}+1} \notin E$, since otherwise again $\theta_{\mc L^{(1)}}\in \mc I_{n_0+1}$,
by definition of $\mc I_{n_0+1}$.
$\left(\mc A{\ref{axiom: 8}}\right)$,  and $(\mc F\ref{axiom: diophantine 2})_{n_0}$ hence yield $x_{\mc L^{(1)}+M_{n_0}+2} \in C$.
By $(\mc F)_{n_0}$,
we get that
$\left(\mc I_{n_0}+ (M_{n_0}+2)\w\right)\cap \mc Z_{n_0}^-=
\emptyset$.
Thus, $\left(\theta_{\mc L^{(1)}+M_{n_0}+2}, x_{\mc L^{(1)}+M_{n_0}+2}\right)$ verifies
$\left(\mc B1\right)_{n_0+1}$. 
The statement follows by induction.
\end{proof}
Notice that $(\theta,x)\in\mc A_n$ satisfies \ref{axiom: 21} with 
$\mc L_n=M_n-1$ and $(\theta,x)\in \mc B_n$ satisfies
\ref{axiom: 22} with $\mc R_n = M_n$ because of $(\mc F2)_n$.
\begin{cor}[cf. {\cite[Corollary~3.7]{Jäger}}]
\label{cor: nested sequence}
Let $N>n \in \N_0$.
Suppose $f_\beta$ satisfies
$(\mc A\ref{axiom: 3})$, $(\mc A\ref{axiom: 4})$, $(\mc A\ref{axiom: 8})$ as well as $(\mc F1)_{N-1}$, $(\mc F2)_{N}$ with $\beta \in \tilde{\mathscr B}(N)$. Then 
$f_\beta^{M_N-M_n}(\A_N)\ssq \left(\I_n-(M_n-1)\w\right) \times (c^-,c^+]\ssq \A_n$ and
$f_\beta^{-M_N+M_n}(\B_N)\ssq \left(\I_n+(M_n+1)\w\right) \times [e^-,e^+)\ssq \B_n$.
\end{cor}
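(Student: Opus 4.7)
The corollary contains a forward and a backward inclusion, which I would prove in parallel; let me describe the forward one, $f_\beta^{M_N-M_n}(\A_N) \ssq (\I_n - (M_n-1)\w) \times (c^-,c^+] \ssq \A_n$, the backward case being entirely analogous. The outer inclusion is trivial since $(c^-,c^+] \ssq C$, so I focus on the inner one, split into a base and a fiber coordinate statement. The base part is immediate: any $(\theta_0, x_0) \in \A_N$ has $\theta_0 = \tilde\theta - (M_N-1)\w$ with $\tilde\theta \in \I_N$, and the rigid rotation by $(M_N - M_n)\w$ sends it to $\tilde\theta - (M_n-1)\w$, which lies in $\I_n - (M_n-1)\w$ since $\I_N \ssq \I_n$ (cf.\ the remark following Definition~\ref{def: critical regions}).

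For the fiber coordinate I would apply Lemma~\ref{lem: duration of stay in contracting/expanding regions} at level $N$ to the orbit of $(\theta_0, x_0)$. Its hypothesis $(\mc F)_{N-1}$ is supplied by our assumptions (observe that $(\mc F2)_{N}$ automatically gives $(\mc F2)_{N-1}$), and condition $(\mc B1)_N$ is verified as follows: $x_0 \in C$ holds by definition of $\A_N$; to obtain $\theta_0 \notin \mc Z^-_{N-1}$, I note the trivial inclusion $\mc Z^-_{N-1} \ssq \mc W_{N-1}$ (the index ranges $\{-(M_j-2),\ldots,0\}$ sit inside $\{-(M_j-1),\ldots,0\}$), while $(\mc F2)_N$ provides $(\I_N - (M_N-1)\w) \cap \mc W_{N-1} = \emptyset$. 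The lemma then yields the implication $x_k \notin C \Rightarrow \theta_k \in \mc V_{N-1}$ for $k = 1,\ldots,\mc L_N$, so by contraposition it suffices to show that $\theta_k \notin \mc V_{N-1}$ and $k \leq \mc L_N$ for every $k \in \{1,\ldots,M_N - M_n\}$; the strict bound $x_{M_N-M_n} > c^-$ is then a small additional step using strict monotonicity of the fibre maps together with $(\mc A\ref{axiom: 8})$.

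The combinatorial verification is the main obstacle. Suppose $\theta_k = \tilde\theta - (M_N - 1 - k)\w$ belonged to $\I_i + l\w$ for some $i \leq N-1$ and $l \in \{1,\ldots,M_i+1\}$. Then $\tilde\theta \in \I_i + (l + M_N - 1 - k)\w$, and combining with $\tilde\theta \in \I_N \ssq \I_i$ one concludes $\I_i \cap (\I_i + m\w) \neq \emptyset$ for $m = l + M_N - 1 - k$. As $k$ and $l$ range over $\{1,\ldots,M_N-M_n\}$ and $\{1,\ldots,M_i+1\}$ respectively, $m$ traverses $\{M_n,\ldots,M_i + M_N - 1\}$. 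Such a non-trivial return is ruled out by $(\mc F1)_{N-1}$ provided the no-return window $2K_i M_i$ exceeds $M_i + M_N - 1$ for every $i \leq N-1$; this is a growth condition on $(M_n),(K_n)$ that is part of the standing setup of the induction. The same computation also yields $\mc L_N \geq M_N - 1 \geq M_N - M_n$, closing the forward inclusion.

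For the backward inclusion $f_\beta^{-(M_N-M_n)}(\B_N) \ssq (\I_n+(M_n+1)\w) \times [e^-,e^+) \ssq \B_n$, I would rerun the identical argument with the roles of contraction and expansion interchanged: the base part again uses $\I_N \ssq \I_n$, and the fiber part invokes the backward half of Lemma~\ref{lem: duration of stay in contracting/expanding regions} under hypothesis $(\mc B2)_N$, with $\mc Z^+_{N-1}$ and $\mc V_{N-1}$ replacing $\mc Z^-_{N-1}$ and $\mc W_{N-1}$ in the verification of hypotheses; the offset counting (with backward iterates in place of forward ones) is entirely parallel, closing the proof.
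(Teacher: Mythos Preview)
Your argument contains a genuine gap in the ``combinatorial verification''. You claim that $\theta_k \notin \mc V_{N-1}$ for \emph{every} $k \in \{1,\ldots,M_N-M_n\}$, and justify this via the condition $2K_iM_i > M_i + M_N - 1$ for all $i \leq N-1$, which you describe as ``part of the standing setup of the induction''. It is not. In the paper's construction one has $M_j \in [K_{j-1}M_{j-1}, 2K_{j-1}M_{j-1}]$, so $M_N \geq \big(\prod_{l=0}^{N-1} K_l\big) M_0$, which for $N \geq 2$ already vastly exceeds $(2K_0-1)M_0$; hence your condition fails at $i=0$. In fact the intermediate base points $\theta_k$ \emph{do} enter $\mc V_{N-1}$ in general --- that is precisely why Lemma~\ref{lem: duration of stay in contracting/expanding regions} allows temporary excursions out of $C$ and only asserts the implication $x_k \notin C \Rightarrow \theta_k \in \mc V_{N-1}$ rather than $x_k \in C$ for all $k$.

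The paper circumvents this by a one-step induction: it shows $f_\beta^{M_{m+1}-M_m}(\A_{m+1}) \ssq \A_m$ for each $m$ and then composes. For a single step one only needs $\theta_{M_{m+1}-M_m} \notin \mc V_m$, i.e.\ $(\I_{m+1}-(M_m-1)\w)\cap \mc V_m = \emptyset$; this follows from $(\mc F2)_m$ for the levels $j<m$ of $\mc V_m$ and from $(\mc F1)_m$ for the level $j=m$ (the relevant shifts lie in $\{M_m,\ldots,2M_m\} \ssq \{1,\ldots,2K_mM_m\}$). Your direct approach is salvageable if you restrict attention to the \emph{final} index $k = M_N - M_n$: there $\theta_k \in \I_N - (M_n-1)\w \ssq \I_n - (M_n-1)\w$, and for levels $i<n$ you must invoke $(\mc F2)_n$ (not $(\mc F1)_i$), while for $n \leq i \leq N-1$ the shifts are at most $M_i + M_n \leq 2M_i \leq 2K_iM_i$ and $(\mc F1)_i$ suffices. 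You would then still need a separate check that $M_N - M_n \leq \mc L_N$, which no longer follows from ``the same computation''.
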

\begin{proof}
Since $\mc I_{n+1}-(M_{n}-1)\w \cap \mc V_n=\emptyset$,
Lemma~\ref{lem: duration of stay in contracting/expanding regions} yields
$f_\beta^{M_{n+1}-M_n}(\A_{n+1})\ssq \A_n$. 
Due to $\left(\mc A{\ref{axiom: 8}}\right)$, the proof of Lemma~\ref{lem: duration of stay in contracting/expanding regions} even yields
the slightly stronger inclusion $f_\beta^{M_{n+1}-M_n}(\A_{n+1})\ssq \left(\I_n-(M_n-1)\w\right) \times (c^-,c^+]$. Now, the first result follows by induction.
The other relation follows similarly.
\end{proof}
By means of the next statement, we can control the time spent in the contracting
and expanding region, respectively.
For $n,N\in \N$ set
\begin{align*}
\mc P_n^N(\theta,x) &\= \# \{l \in [n,N-1]\cap \N_0 \:x_l \in C\text{ and } \theta_l\notin \I_0\}\\
\mc Q_n^N(\theta,x) &\= \# \{l \in [n,N-1] \cap \N_0 \:x_{-l} \in E\text{ and } \theta_{-l}\notin \I_0+\w\}.
\end{align*}
Notice the slight difference to the corresponding definitions in \cite{Jäger}, where 
the first coordinate had not to be taken into account.
Set
\begin{align*}
b_0\=1, \quad b_n\=\left(1-\frac1{K_{n-1}}\right)b_{n-1} \ (n\in \N).
\end{align*}

\begin{lem}[cf. {\cite[Lemma~3.8]{Jäger}}]
\label{lem: estimate for times spent in contracting/expanding regions}
Let $n\in \N_0$. Suppose $f_\beta$ satisfies
$(\mc A\ref{axiom: 3})$, $(\mc A\ref{axiom: 4})$, $(\mc A\ref{axiom: 8})$ and $(\mc F)_{n-1}$ with $\beta \in \tilde{\mathscr B}(n)$.
Furthermore, assume 
\ref{axiom: 21}
and let $0<\mc L^{(1)}<\ldots <\mc L^{(N)}=\mc L_n$ be as in Lemma~\ref{lem: duration of stay in contracting/expanding regions}.
Then, for each $i=0,\ldots,N$, we have
\begin{align*}
 P_k^{\mc L^{(i)}}(\theta,x) \geq b_n (\mc L^{(i)}-k) \quad (k=0,\ldots,\mc L^{(i)}).
\end{align*}
Analogously backwards: Instead of 
\ref{axiom: 21},
assume \ref{axiom: 22} and let $0<\mc R^{(1)}<\ldots <\mc R^{(N)}=\mc R_n$ be as in Lemma~\ref{lem: duration of stay in contracting/expanding regions}.
Then, for each $i=0,\ldots,N$, we have
\begin{align*}
 Q_k^{\mc R^{(i)}}(\theta,x) \geq b_n (\mc R^{(i)}-k) \quad (k=0,\ldots,\mc R^{(i)}).
\end{align*}
\end{lem}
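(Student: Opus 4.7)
The plan is to argue by induction on $n$, in parallel to (and relying on) Lemma~\ref{lem: duration of stay in contracting/expanding regions}. By symmetry I only sketch the forward case. For $n=0$ the claim is immediate: $\I_{-1}=\T^d$ forces $\mc L^{(i)}=i$, and the combination of $(\mc B1)_0$, the conclusion $(\mc C1)_0$ of Lemma~\ref{lem: duration of stay in contracting/expanding regions} --- which at this level just says $x_k\in C$ for all $k\in[1,\mc L_0]$, since $\mc V_{-1}=\emptyset$ --- together with the defining property of $\mc L_0$, yields $x_l\in C$ and $\theta_l\notin\I_0$ for every $l\in[0,\mc L_0-1]$. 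Hence $\mc P_k^{\mc L^{(i)}}(\theta,x) = \mc L^{(i)}-k = b_0(\mc L^{(i)}-k)$.

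For the inductive step, assume the statement at level $n-1$ and fix $(\theta,x)$ verifying $(\mc B1)_n$. Because $\mc Z^-_{n-1}\supseteq \mc Z^-_{n-2}$, $(\mc B1)_n$ also implies $(\mc B1)_{n-1}$, and Lemma~\ref{lem: duration of stay in contracting/expanding regions} further guarantees that each restart point $(\theta_{\mc L^{(j)}+M_{n-1}+2},x_{\mc L^{(j)}+M_{n-1}+2})$ again satisfies $(\mc B1)_n$ and hence $(\mc B1)_{n-1}$. The strategy is to split $[0,\mc L^{(i)}]$ at the hits $\mc L^{(j)}$ of $\I_{n-1}$ (setting $\mc L^{(0)}\!:=\!0$), discard on each piece the ``bad'' block $[\mc L^{(j)},\mc L^{(j)}+M_{n-1}+2)$ for $j\geq 1$, and apply the induction hypothesis at level $n-1$ on the remaining part $[\mc L^{(j)}+M_{n-1}+2,\mc L^{(j+1)}]$ starting from the corresponding restart point; on the initial piece $[0,\mc L^{(1)}]$ the hypothesis is applied directly to $(\theta,x)$. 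In each case the right endpoint is a hit of $\I_{n-1}$ and hence of $\I_{n-2}$ (since $\I_{n-1}\subseteq\I_{n-2}$), so the induction hypothesis supplies a lower bound of the form $b_{n-1}\cdot(\text{length of sub-orbit})$.

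Summing these contributions produces
\begin{equation*}
\mc P_0^{\mc L^{(i)}}(\theta,x) \;\geq\; b_{n-1}\Bigl[\mc L^{(i)}-(i-1)(M_{n-1}+2)\Bigr].
\end{equation*}
By $(\mc F1)_{n-1}$ consecutive hits of $\I_{n-1}$ are separated by more than $2K_{n-1}M_{n-1}$ iterates, so $(i-1)\leq \mc L^{(i)}/(2K_{n-1}M_{n-1})$; combined with $M_{n-1}\geq 2$ this forces $(i-1)(M_{n-1}+2)\leq \mc L^{(i)}/K_{n-1}$, and hence $\mc P_0^{\mc L^{(i)}}\geq b_{n-1}(1-1/K_{n-1})\mc L^{(i)} = b_n\mc L^{(i)}$, the desired bound for $k=0$. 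For general $k\in[0,\mc L^{(i)}]$ the identical argument applies, restarted either at $(\theta,x)$ itself (when $k\leq \mc L^{(1)}$) or at the latest restart point $\mc L^{(j_0)}+M_{n-1}+2\leq k$; any discarded block containing $k$ truncates the interval by at most $M_{n-1}+2$, a loss once again absorbed by the slack $1/K_{n-1}$.

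The main technical obstacle lies precisely in this last bookkeeping at the left endpoint $k$: when $k$ falls inside one of the discarded blocks $[\mc L^{(j)},\mc L^{(j)}+M_{n-1}+2)$, one has to check that the cumulative losses --- both from the full interior sub-intervals and from the truncated block containing $k$ --- still fit inside the multiplicative slack $1/K_{n-1}$ provided by $(\mc F1)_{n-1}$. This is exactly where $M_{n-1}\geq 2$ and the quantitative separation of the critical regions enter, converting the provisional estimate $b_{n-1}[(\mc L^{(i)}-k)-\text{losses}]$ into $b_n(\mc L^{(i)}-k)$ and closing the induction.
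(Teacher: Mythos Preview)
Your proposal is correct and follows the same inductive strategy as the paper: reduce to level $n-1$ via the restart points provided by Lemma~\ref{lem: duration of stay in contracting/expanding regions}, lose at most $M_{n-1}+2$ iterates per visit to $\I_{n-1}$, and absorb these losses using the separation $\mc L^{(j+1)}-\mc L^{(j)}\geq 2K_{n-1}M_{n-1}$ from $(\mc F1)_{n-1}$ together with $M_{n-1}\geq 2$.

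The only difference is organizational. You collect the losses globally as $(i-1)(M_{n-1}+2)$ and compare to the total length $\mc L^{(i)}-k$, which---as you yourself flag---forces extra care when $k$ sits inside a bad block. The paper instead proves the single-block estimate $\mc P_k^{\mc L^{(j+1)}}(\theta,x)\geq b_{n}(\mc L^{(j+1)}-k)$ for each $k\in[\mc L^{(j)},\mc L^{(j+1)}]$ directly (splitting into the cases $k\in[\mc L^{(j)},\mc L^{(j)}+M_{n-1}+1]$ and $k\in[\mc L^{(j)}+M_{n-1}+2,\mc L^{(j+1)}]$), and then simply telescopes $\mc P_k^{\mc L^{(i)}}=\mc P_k^{\mc L^{(j)}}+\sum_{l=j}^{i-1}\mc P_{\mc L^{(l)}}^{\mc L^{(l+1)}}$. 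This block-by-block route makes the endpoint bookkeeping automatic and avoids the delicate accounting you allude to in your final paragraph; your global-sum route works too, but needs that accounting spelled out rather than asserted.
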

\begin{rem}
For the present work (that is, in order to show the existence of an SNA)
it suffices to have the lower bound for
$P_k^{\mc L_n}(\theta,x)$ only. Nevertheless, the estimates for $P_k^{\mc L^{(i)}}(\theta,x)$ will be needed in order to study further properties of the SNA \cite{FuhrmannJäger}.
\end{rem}
\begin{proof} We consider the first inequality, the second  one follows similarly.
For $n=0$, the statement follows from $(\mc A\ref{axiom: 8})$.

Assume the statement is true for $n=n_0$ and assume $(\theta,x)$ verifies $(\mc B1)_{n_0+1}$.
Due to Lemma~\ref{lem: duration of stay in contracting/expanding regions},
we have that $(\theta_{\mc L^{(i)}+M_{n_0}+2},x_{\mc L^{(i)}+M_{n_0}+2})$
satisfies $(\mc B1)_{n_0+1}$ for $i=1,\ldots,N-1$. 
By the induction hypothesis we thus get the desired estimate for
$P_k^{\mc L^{(i)}}(\theta,x)$
as long as $k\in [\mc L^{(i)}+M_{n_0}+2,\mc L^{(i+1)}]$ for some $0\leq i \leq N-1$ or as
$i=1$ and $k\in [0,\mc L^{(1)}-1]$.

Moreover, by $(\mc F1)_{n_0}$ we have
\begin{align}\label{eq: Li-Li+1}
\mc L^{(i+1)} - \mc L^{(i)}\geq 2K_{n_0}M_{n_0}. 
\end{align}
Hence,
for all $k\in [\mc L^{(i)},\mc L^{(i)}+M_{n_0}+1]$ we get
\begin{align*}
\mc P_k^{\mc L^{(i+1)}}(\theta,x) &\geq
\mc P_{\mc L^{(i)}+M_{n_0}+2}^{\mc L^{(i+1)}}(\theta,x)\geq b_{n_0} (\mc L^{(i+1)}-(\mc L^{(i)}+M_{n_0}+2))\geq
b_{n_0} (\mc L^{(i+1)}-\mc L^{(i)}-2M_{n_0})
\\
&\stackrel{\left(\ref{eq: Li-Li+1}\right)}{\geq} b_{n_0+1} (\mc L^{(i+1)}-\mc L^{(i)})
\geq b_{n_0+1} (\mc L^{(i+1)}-k).
\end{align*}
Altogether, with $j \= \min\{l=1,\ldots,N\: \mc L^{(l)}\geq k\}$ and $N\geq i\geq j$ we therefore have
\begin{align*}
 \mc P_k^{\mc L^{(i)}}(\theta,x) = \mc P_k^{\mc L^{(j)}}(\theta,x)+ 
\sum_{l=j}^{i} \mc P_{\mc L^{(l)}}^{\mc L^{(l+1)}}(\theta,x)\geq b_{n_0+1}\left(\mc L^{(j)}-k+\sum_{l=j}^i \mc L^{(l+1)}-\mc L^{(l)}\right)=b_{n_0+1} (\mc L^{(i)}-k).
\end{align*}
\end{proof}
The following two results can be proved like the respective statements in \cite{Jäger}.
\begin{cor}[cf. {\cite[Corollary~3.9]{Jäger}}]
\label{cor: derivatives of iterates}
Let $n\in \N_0$. Suppose $f_\beta$ satisfies
$(\mc A\ref{axiom: 3})$, $(\mc A\ref{axiom: 4})$, $(\mc A\ref{axiom: 8})$ and $(\mc F)_{n-1}$ with $\beta \in \tilde{\mathscr B}(n)$.
Further, assume $(\mc A\ref{axiom: 1})$ and let $(\theta,x)\in f_\beta^{M_n-1}(\mc A_n)$. Then
\begin{align*}
 \d_x f_{\beta,\theta}^{-k}(x)\geq \left(\alpha_c^{b_n} \alpha_u^{1-b_n}\right)^{-k} \quad(0\leq k \leq M_n-1).
\end{align*}
Analogously, instead of $(\mc A\ref{axiom: 1})$ assume $(\mc A\ref{axiom: 2})$ and let $(\theta,x)\in f_\beta^{-M_n}(\mc B_n)$.
Then
\begin{align*}
 \d_x f_{\beta,\theta}^{k}(x)\geq \left(\alpha_e^{b_n} \alpha_l^{1-b_n}\right)^{k} \quad(0\leq k \leq M_n).
\end{align*}
\end{cor}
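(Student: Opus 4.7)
The plan is to convert the backward derivative along $f_\beta^{-1}$ into a product of reciprocal fibre derivatives taken along the \emph{forward} orbit emanating from the preimage point $(\theta_0,x_0) \= f_\beta^{-(M_n-1)}(\theta,x) \in \mc A_n$, and then to use Lemma~\ref{lem: estimate for times spent in contracting/expanding regions} to count how many of the resulting factors must fall into the strongly contracting regime governed by $(\mc A\ref{axiom: 1})$.

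Writing $(\theta_l,x_l) \= f_\beta^l(\theta_0,x_0)$, the chain rule together with the convention $f_{\beta,\theta}^{-1}=(f_{\beta,\theta-\w})^{-1}$ gives the telescoping identity
\[
\d_x f_{\beta,\theta}^{-k}(x) \;=\; \prod_{l=M_n-1-k}^{M_n-2} \frac{1}{\d_x f_{\beta,\theta_l}(x_l)}
\qquad (0\leq k\leq M_n-1),
\]
a product of exactly $k$ factors. By the remark following Lemma~\ref{lem: duration of stay in contracting/expanding regions}, $(\theta_0,x_0)\in\mc A_n$ satisfies $(\mc B1)_n$ with $\mc L_n=M_n-1$, so Lemma~\ref{lem: estimate for times spent in contracting/expanding regions} yields
\[
\mc P_{M_n-1-k}^{M_n-1}(\theta_0,x_0) \;\geq\; b_n\, k.
\]
In particular, at least $b_nk$ of the indices $l$ in the product satisfy $x_l\in C$, so that $(\mc A\ref{axiom: 1})$ delivers $\d_x f_{\beta,\theta_l}(x_l) < \alpha_c$ for each of them. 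The conclusion $(\mc C1)_n$ of Lemma~\ref{lem: duration of stay in contracting/expanding regions}, combined with $(\mc A\ref{axiom: 4})$ and $(\mc A\ref{axiom: 8})$, guarantees that the remaining at most $(1-b_n)k$ indices still correspond to orbit points lying in $[e^-,c^+]$, and for them $(\mc A\ref{axiom: 3})$ at least yields the weaker bound $\d_x f_{\beta,\theta_l}(x_l) < \alpha_u$. Multiplying the reciprocals produces the claimed estimate
\[
\d_x f_{\beta,\theta}^{-k}(x) \;\geq\; \alpha_c^{-b_nk}\,\alpha_u^{-(1-b_n)k} \;=\; \bigl(\alpha_c^{b_n}\alpha_u^{1-b_n}\bigr)^{-k}.
\]

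The expanding/backward case proceeds by an entirely symmetric argument, starting from $(\theta_0,x_0) = f_\beta^{M_n}(\theta,x) \in \mc B_n$, invoking the second half of Lemma~\ref{lem: estimate for times spent in contracting/expanding regions} (with $\mc Q_k^{\mc R_n}$ in the role of $\mc P_k^{\mc L_n}$ and $\mc R_n=M_n$), and using $(\mc A\ref{axiom: 2})$ in place of $(\mc A\ref{axiom: 1})$. I do not anticipate any genuine obstacle here; the only step requiring real care is the index bookkeeping --- converting the derivative along the backward iterates of $f_\beta$ into derivatives along the forward orbit of the preimage point, and then matching the resulting range of indices with the counting provided by Lemma~\ref{lem: estimate for times spent in contracting/expanding regions}.
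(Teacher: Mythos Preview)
Your approach is exactly the one the paper has in mind (it simply defers to \cite[Corollary~3.9]{J\"ager}, where the argument is precisely this combination of Lemma~\ref{lem: estimate for times spent in contracting/expanding regions} with the pointwise bounds $(\mc A\ref{axiom: 1})$--$(\mc A\ref{axiom: 3})$). The forward case is carried out cleanly: the chain-rule identity is right, and the product range $l\in[M_n-1-k,\,M_n-2]$ matches the range counted by $\mc P_{M_n-1-k}^{\,M_n-1}$ on the nose.

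One small caution on the backward half: it is not \emph{perfectly} symmetric. If you anchor at $(\theta',x')\in\mc B_n$ and write $\d_x f_{\beta,\theta}^{k}(x)=\prod_{l=M_n-k+1}^{M_n}\d_x f_{\beta,\theta'_{-l}}(x'_{-l})$, the index range is $[M_n-k+1,\,M_n]$, whereas $\mc Q_{M_n-k}^{\,M_n}$ counts $l\in[M_n-k,\,M_n-1]$ --- a shift by one, reflecting the built-in asymmetry $\mc L_n=M_n-1$ versus $\mc R_n=M_n$. This is precisely the ``index bookkeeping'' you already flagged; it is harmless for the applications but does mean the backward case needs its own line rather than a bare appeal to symmetry.
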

Define
\begin{itemize}
 \item $b \=\lim_{n\to \infty} b_n$,
 \item $\alpha_-\=\alpha_c^b \alpha_u^{1-b}$,
\item $\alpha_+\=\alpha_e^b \alpha_l^{1-b}$.
\end{itemize}
\begin{prop}[cf.{\cite[Proposition~3.10]{Jäger}}]
\label{prop: sink source orbit} 
Suppose $f_\beta$ satisfies
$(\mc A\ref{axiom: 1})$-$(\mc A\ref{axiom: 4})$ as well as $(\mc A\ref{axiom: 8})$ and for each
$n\in \N$ we have $f_\beta^{M_n-1}(\mc A_n)\cap f_\beta^{-M_n}(\mc B_n)\neq \emptyset$. Moreover, assume $(\mc F)_{n}$ holds for all $n\in \N$, $\alpha_-^{-1},\alpha_+>1$
and $\beta \in \bigcap_{n \in \N} \tilde{\mathscr B}(n)\neq \emptyset$.
Then there exists a sink-source orbit in $\T^d\times [e^-,c^+]$ and hence an SNA and an SNR. More precisely,
\begin{align*}
 \{(\theta,x) \in \T^d\times \X\: (\theta,x) \text{ is a sink-source orbit}\}\supseteq 
\bigcap_{n\in \N} \left (f_\beta^{M_n-1}(\mc A_n) \cap f_\beta^{-M_n}(\mc B_n) \right) 
\neq\emptyset.
\end{align*}
\end{prop}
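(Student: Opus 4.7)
The plan is to locate a sink-source orbit as a point in the nested intersection $K_\infty \= \bigcap_{n\in\N} K_n$, where $K_n \= f_\beta^{M_n-1}(\mc A_n)\cap f_\beta^{-M_n}(\mc B_n)$, and then read off positive forward and backward vertical Lyapunov exponents from Corollary~\ref{cor: derivatives of iterates}.

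First I would verify that $K_\infty$ is non-empty. Using induction, $(\mc A\ref{axiom: 10})$, continuity of $f_\beta$ and the fact that the continuous image of a compact set is compact, every $\mc I_{n,\beta}$ is closed; hence $\mc A_n \ssq \T^d\times C$ and $\mc B_n \ssq \T^d\times E$ are compact subsets of $\T^d\times [e^-,c^+]$. The local $\mc C^2$-invertibility of $f_\beta$ on a neighbourhood of $\T^d\times [e^-,c^+]$ noted after $(\mc A\ref{axiom: 8})$ then makes each $K_n$ compact. Pushing forward the first inclusion of Corollary~\ref{cor: nested sequence} by $f_\beta^{M_n-1}$ and pulling back the second one by $f_\beta^{-M_n}$ gives $K_N\ssq K_n$ whenever $N\geq n$, so $(K_n)_{n\in\N}$ is a decreasing chain of non-empty compacta, and the finite intersection property yields $K_\infty \neq \emptyset$.

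Next, fix $(\theta,x)\in K_\infty$. Since $(\theta,x)\in f_\beta^{M_n-1}(\mc A_n)$ for every $n$, the backward half of Corollary~\ref{cor: derivatives of iterates} gives
\[
 \tfrac{1}{M_n-1}\log \d_x f_{\beta,\theta}^{-(M_n-1)}(x) \;\geq\; -\bigl(b_n\log\alpha_c+(1-b_n)\log\alpha_u\bigr).
\]
As $n\to\infty$ we have $M_n-1\to\infty$ and $b_n\to b$, so the right-hand side tends to $-\log\alpha_-=\log(\alpha_-^{-1})>0$ by hypothesis. Hence $\limsup_{k\to\infty}\tfrac 1k\log|\d_x f_{\beta,\theta}^{-k}(x)| \geq \log(\alpha_-^{-1})>0$. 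The same argument with the forward half of Corollary~\ref{cor: derivatives of iterates} at $k=M_n$ gives $\limsup_{k\to\infty}\tfrac 1k\log|\d_x f_{\beta,\theta}^{k}(x)| \geq \log\alpha_+>0$. So $(\theta,x)$ is a sink-source orbit lying in $\T^d\times [e^-,c^+]$ and the inclusion claimed by the proposition follows.

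Finally, Theorem~\ref{thm: sink source orbit <-> SNA} (applied to the $f_\beta$-forward-invariant slab $\T^d\times [e^-,c^+]$ as described in the comment immediately after its statement when $\X$ is not itself compact) promotes the existence of this sink-source orbit to the existence of an SNA and an SNR. The main obstacle is the compactness/bookkeeping step in the first paragraph: one must make sure that $\mc I_{n,\beta}$ stays closed under the inductive definition, that the local inverse of $f_\beta$ is available where needed so that $f_\beta^{-M_n}(\mc B_n)$ is well-defined and compact, and that the nested inclusions survive passage to the intersection. Once $(\theta,x)$ has been produced, the Lyapunov estimates combining the two halves of Corollary~\ref{cor: derivatives of iterates} are routine.
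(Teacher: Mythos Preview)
The paper does not supply its own proof of this proposition; it explicitly defers to \cite[Proposition~3.10]{J�ger}. Your argument---nested non-empty compacta via Corollary~\ref{cor: nested sequence}, followed by the two halves of Corollary~\ref{cor: derivatives of iterates} to read off positive forward and backward Lyapunov exponents along the subsequence $M_n\to\infty$---is exactly the intended one and is correct.
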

\subsection{Geometric Considerations}\label{sect: geometric considerations}
In this paragraph, we get an upper bound for the size of the $n$-th critical region 
$\mc I_{n,\beta}$.
So far, we dealt with $\beta \in \tilde{\mathscr B}(n)$ in order to guarantee that the respective orbits stay in the strip $\T^d\times [e^-,c^+]$. Due to the monotonicity in $\beta$ (provided by ($\A\ref{axiom: 6}$)), this amounts to only considering
small enough $\beta$. 
On the other hand, $\tilde{\mathscr B}(n)$ also contains parameters $\beta$ which are \emph{too small} such that $\mc I_{n,\beta}=\emptyset$, which is not desirable either. In order to exclude these parameters as well, we define the set of \emph{admissible parameters} up to order $n\in\N$ by 
\begin{align*}
\mathscr B(n)&\=\left\{\beta \in \tilde{\mathscr B}(n)\:
f^{M_l-1}_{\beta,\theta_{-\left(M_l-1\right)}}(c^-)
\leq
f^{-\left(M_l+1\right)}_{\beta,\theta_{M_l+1}}(e^+) \text{ for some } \theta \in \I_{l} \text{ and } 0\leq l\leq n-1\right\}\\
&=\left\{\beta \in \tilde{\mathscr B}(n)\: \I_{l,\beta}\neq\emptyset \text{ for }
0\leq l \leq n
\right\}
,
\end{align*}
where we assume $\left ( M_l\right)_{l=0,\ldots,n}$ to be given.
\begin{prop}\label{prop: monotonicity}
Suppose $\left(f_\beta\right)_{\beta \in [0,1]}$ satisfies $(\A\ref{axiom: 6})$ and $(\mc A\ref{axiom: 10})$ and let $\beta<\beta' \in \tilde{\mathscr B}(n)$ for some $n\in \N_0$. Then
\begin{align}\label{eq: union of critical regions}
 \I_{n,\beta} \ssq \I_{n,\beta'}.
\end{align}
In particular, this implies that $\mathscr B(n)$ is an interval.
\end{prop}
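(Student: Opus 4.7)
My plan is induction on $n$. The base case $n=0$ is immediate from the second clause of $(\A\ref{axiom: 10})$. For the inductive step, fix $\beta<\beta'$ in $\tilde{\mathscr B}(n+1)\subseteq \tilde{\mathscr B}(n)$ and pick $\theta \in \I_{n+1,\beta}$. The inductive hypothesis yields $\theta \in \I_{n,\beta}\ssq \I_{n,\beta'}$.

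The key observation is that, by the fibre-wise monotonicity in $x$ provided by $(\A\ref{axiom: 3})$, the definition of $\I_{n+1,\beta}$ can be rephrased as the set of those $\theta \in \I_{n,\beta}$ for which the two closed intervals
\[
I_1(\beta,\theta) \= \big[f^{M_n-1}_{\beta,\theta-(M_n-1)\w}(c^-),\, f^{M_n-1}_{\beta,\theta-(M_n-1)\w}(c^+)\big], \quad
I_2(\beta,\theta) \= \big[f^{-(M_n+1)}_{\beta,\theta+(M_n+1)\w}(e^-),\, f^{-(M_n+1)}_{\beta,\theta+(M_n+1)\w}(e^+)\big]
\]
have non-empty intersection. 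From $(\A\ref{axiom: 6})$ together with monotonicity in $x$, a straightforward composition induction gives $f^k_{\beta',\theta}(x)\leq f^k_{\beta,\theta}(x)$ and $f^{-k}_{\beta',\theta}(x)\geq f^{-k}_{\beta,\theta}(x)$ for all $k\in \N_0$, so passing from $\beta$ to $\beta'$ shifts $I_1$ downward and $I_2$ upward.

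The main (if minor) obstacle is the asymmetry of the overlap condition: writing $I_1=[a_\beta,b_\beta]$ and $I_2=[c_\beta,d_\beta]$, non-empty intersection is equivalent to $a_\beta\leq d_\beta$ \emph{and} $c_\beta\leq b_\beta$. The first inequality transfers for free under the shift, since $a_{\beta'}\leq a_\beta\leq d_\beta\leq d_{\beta'}$. The second, $c_{\beta'}\leq b_{\beta'}$, does \emph{not} follow from its $\beta$-version and has to be supplied externally, which is precisely where the hypothesis $\beta'\in \tilde{\mathscr B}(n+1)$ enters: its level-$n$ clause reads $f^{-(M_n+1)}_{\beta'}(e^-)\leq f^{M_n-1}_{\beta'}(c^+)$ for every $\theta \in \I_{n,\beta'}$, and applies in particular at our $\theta$. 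Hence both overlap inequalities hold at $\beta'$, giving $\theta\in \I_{n+1,\beta'}$.

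Finally, the ``in particular'' claim assembles two monotonicities: the inclusion just proved renders ``$\I_{l,\beta}\neq \emptyset$'' upward-closed in $\beta$ within $\tilde{\mathscr B}(n)$, while the very same sign considerations show that $\tilde{\mathscr B}(n)$ is itself downward-closed in $\mathscr B(0)$ (decreasing $\beta$ only shrinks each $\I_{l,\beta}$ and simultaneously makes each pointwise inequality $b_\beta\geq c_\beta$ easier to satisfy). Since $\mathscr B(0)$ is an interval, so are $\tilde{\mathscr B}(n)$ and therefore $\mathscr B(n)$.
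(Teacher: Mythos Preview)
Your proof is correct and follows the same inductive approach as the paper: reduce membership in $\I_{n+1,\beta}$ to a single scalar inequality that is monotone in $\beta$, using the hypothesis $\beta'\in\tilde{\mathscr B}(n+1)$ to handle the other half of the interval-overlap condition. You are in fact more explicit than the paper both about where $\tilde{\mathscr B}(n+1)$ enters (the paper hides this in an ``if and only if'' characterisation of $\I_{n+1,\beta}$) and about the downward-closedness of $\tilde{\mathscr B}(n)$ needed for the ``in particular'' clause, which the paper does not spell out.
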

\begin{proof}
For $n=0$, (\ref{eq: union of critical regions}) holds by
$(\mc A\ref{axiom: 10})$.
Assume (\ref{eq: union of critical regions}) is true for some $n\in \N_0$.
For $\beta \in \tilde{\mathscr B}(n+1)$, we know
$\theta \in \I_{n+1,\beta}$ if and only if
$0 \geq f^{M_{n}-1}_{\beta,\theta_{-\left(M_{n}-1\right)}}(c^-)- f^{-\left(M_{n}+1\right)}_{\beta,\theta_{M_{n}+1}}(e^+)$.
Since $f_{(\cdot)}(\theta,\x)$ is non-increasing,
$f^{M_{n}-1}_{(\cdot),\theta_{-\left(M_{n}-1\right)}}(c^-)- f^{-\left(M_{n}+1\right)}_{(\cdot),\theta_{M_{n}+1}}(e^+)$ is non-increasing, too.
Hence, $\theta \in \I_{n+1,\beta}+\w$ implies
$\theta \in \I_{n+1,\beta'}+\w$. Now, (\ref{eq: union of critical regions}) follows by induction.
\end{proof}

Up to now, we basically used monotonicity in $\beta$ in order to investigate the set of 
admissible parameters. In order to guarantee that $\mathscr B(n)$ is not empty and to control the size of the critical regions $\I_{n,\beta}$, we need 
subtler geometric information.
The intuitive idea of the argument for the smallness of $\I_{n,\beta}$ can be seen by considering $\I_{1,\beta}$:
As $f_\beta^{j}(\mc A_{0,\beta})$ stays in the contracting region for $j=0,\ldots,M_0-1$, the iterates of $\A_{0,\beta}$ become thinner and
thinner horizontal strips with each step of the iteration until they meet $\I_{0,\beta}\times C$. Likewise, 
$f_\beta^{-M_0}(\mc B_{0,\beta})$ is basically a thin horizontal strip. Iterating $f_\beta^{M_0-1}(\mc A_{0,\beta})$ once more deforms the previously horizontal strip to a thin strip around a parabola with second derivative at least $s$ because of $(\A\ref{axiom: 9})$. 
This yields an upper bound for the size of $\I_{1,\beta}$, see Figure~\ref{fig: geometric idea}\,(a).

The smallness of $\I_{n,\beta}$ follows in a similar fashion, but we have to show that even though the iterates of $\mc A_{n,\beta}$
enter the expanding region for some iterates, the overall effect of the iteration under $f$ is still a contraction.
\begin{figure}
\centering 
\subfloat[]{\includegraphics{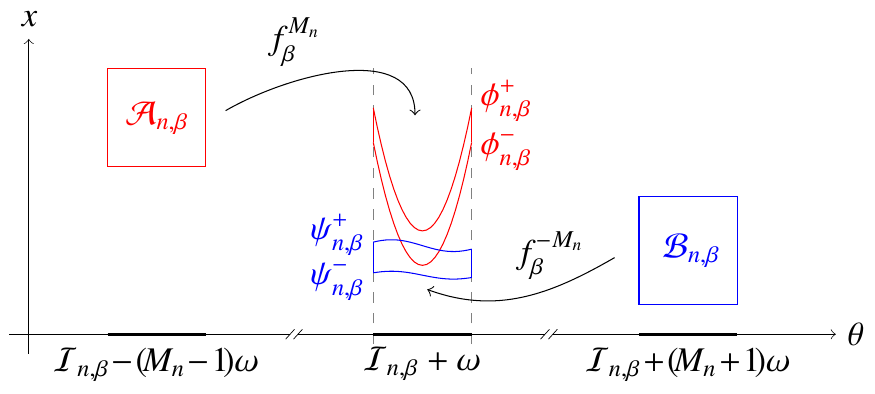}}
\subfloat[]{\includegraphics{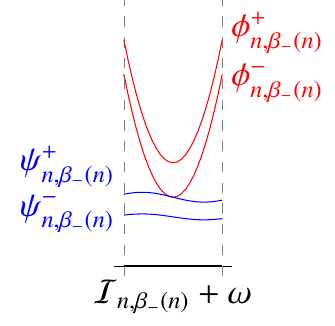}}
\subfloat[]{\includegraphics{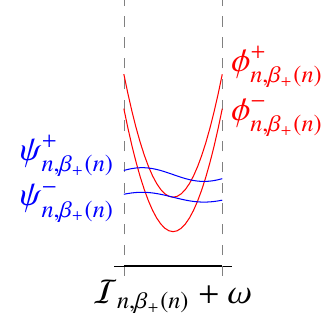}}
\caption{(a) The geometric idea behind the proof of Lemma~\ref{lem: geometric};
(b) $\I_{n+1,\beta_{-}(n+1)}$ is degenerate, with $\beta_-(n+1)=\min \mathscr B(n+1)$; (c) $\beta_+(n)=\max \mathscr B(n+1)$ is the largest parameter such that $\I_{n+1,\beta_{+}(n+1)}$ is connected.} 
\label{fig: geometric idea}
\end{figure}

In order to formalise this intuitive idea, we define the functions 
\begin{align*}
\phi_{n,\beta}^\pm(\theta)\=f^{M_{n}}_{\beta,\theta-M_{n}\w}(c^\pm) \quad
\text{and} \quad
\psi_{n,\beta}^\pm(\theta)\=f^{-M_{n}}_{\beta,\theta+M_{n}\w}(e^\pm)
\end{align*}
for $\theta \in 
\I_{n,\beta}+\w$, $n\in \N_0$. Note that
\begin{align*}
f^{M_{n}}_\beta\left(\mc A_{n,\beta}\right)&=\left\{(\theta,x)\in 
\left(\I_{n,\beta}+\w\right)\times\X\: x\in 
[\phi_{n,\beta}^-(\theta),\phi_{n,\beta}^+(\theta)]\right\},\\
f^{-M_{n}}_\beta\left(\mc B_{n,\beta}\right)&=\left\{(\theta,x)\in \left(\I_{n,\beta}+\w 
\right)\times\X\: x\in [\psi_{n,\beta}^-(\theta),\psi_{n,\beta}^+(\theta)]\right\}
\end{align*}
(cf. Figure~\ref{fig: geometric idea}).
We introduce a shorthand notation for the following inductive assumptions.
\begin{empheq}[right = \empheqbigrbrace \ (\mc I)_n]{align}
\mathscr B(n) \text{ is a non-empty and closed interval,}\label{inductive: 1}\\
\I_{n,\beta} \text{ is closed and convex
for } \beta \in \mathscr B(n)
\label{inductive: 2},
 \\ 
 \phi_{n,\beta}^-(\theta)> \psi_{n,\beta}^+(\theta) 
\text{ for each } \theta \in \d \I_{n,\beta}+\w \text{ and } \beta \in \mathscr B(n)\label{inductive: 3}, \\
\exists \beta_-(n+1)\in \B(n) \text{ and } \exists ! \theta_-^{n} \in \mc I_{n,\beta_-(n+1)}+\w \: \phi_{n,\beta_-(n+1)}^-(\theta_-^{n})=\psi_{n,\beta_-(n+1)}^+(\theta_-^{n})  \label{inductive: 4},\\
\exists \beta_+(n+1)\in \B(n) \text{ and } \exists ! \theta_+^{n} \in \mc I_{n,\beta_+(n+1)}+\w\: \phi_{n,\beta_+(n+1)}^+(\theta_+^{n})=\psi_{n,\beta_+(n+1)}^-(\theta_+^{n}). \label{inductive: 5}
\end{empheq}
Moreover, set
\begin{itemize}
\item $H_n^\phi\=\sup\limits_{\theta \in \mc I_n, \beta \in \mathscr{B}(n)} |\phi_{n,\beta}^+(\theta)-\phi_{n,\beta}^-(\theta)|$,
\item $H_n^\psi\=\sup\limits_{\theta \in \mc I_n, \beta \in \mathscr{B}(n)} |\psi_{n,\beta}^+(\theta)-\psi_{n,\beta}^-(\theta)|$,
\item $v_n^\tau \=\inf\limits_{\substack{\theta \in \mathring \I_{n},\, \beta \in \mathscr{B}(n)
\\ \vartheta \in \mathbb S^{d-1}}} \d_\vartheta^2 \phi^\tau_{n,\beta}(\theta)-\d_\vartheta^2 \psi^{-\tau}_{n,\beta}(\theta)$ $(\tau \in \{-,+\})$.
\end{itemize}
\begin{lem}\label{lem: geometric}
Assume $(\I)_n$ holds for some $n\in \N_0$.
Then $\mathscr B(n+1)$ is non-empty. 
Further, suppose $f_\beta$ satisfies $(\A\ref{axiom: 3})$, $(\A\ref{axiom: 4})$, $(\mc A\ref{axiom: 6})$-$(\mc A\ref{axiom: 8})$, $(\mc A \ref{axiom: 10})$ and $(\mc F1)_{n}$, $(\mc F2)_{n+1}$
for $\beta \in \mathscr B(n+1)$.
If $ v_{n}^\pm,  v_{n+1}^\pm>0$, then
\begin{itemize}
 \item $\left(\I\right)_{n+1}$ holds,
 \item $\left| \mc I_{n+1,\beta} \right|\leq \sqrt8\sqrt{ 
\frac{H_{n}^\phi+H_{n}^\psi}{v_{n}^-}}$ for $\beta \in \mathscr B(n+1)$.
\end{itemize}
\end{lem}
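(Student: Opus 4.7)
The idea is to treat $g(\theta):=\phi_{n,\beta}^-(\theta)-\psi_{n,\beta}^+(\theta)$ and $-h(\theta):=\phi_{n,\beta}^+(\theta)-\psi_{n,\beta}^-(\theta)$ as the two discriminant functions governing how $\I_{n+1,\beta}+\w$ sits inside $\I_n+\w$. By the very definition of $v_n^\pm$, both are strictly convex on $\I_{n,\beta}+\w$, with $\d_\vartheta^2 g\geq v_n^-$ and $\d_\vartheta^2(-h)\geq v_n^+$ for every $\vartheta\in\mathbb S^{d-1}$, and by $(\A\ref{axiom: 6})$ both are monotonically non-increasing in $\beta$. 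The algebraic identity
\[
-h(\theta)=g(\theta)+\bigl(\phi_{n,\beta}^+(\theta)-\phi_{n,\beta}^-(\theta)\bigr)+\bigl(\psi_{n,\beta}^+(\theta)-\psi_{n,\beta}^-(\theta)\bigr),
\]
with both bracketed terms strictly positive (since $c^-<c^+$, $e^-<e^+$ and the fibre maps are strictly increasing), is the bridge between them. Crucially, the defining condition of $\tilde{\mathscr B}(n+1)$ is precisely $-h\geq 0$ on $\I_n+\w$, so for any $\beta\in\mathscr B(n+1)$ one has $\I_{n+1,\beta}+\w=\{\theta\in\I_n+\w : g(\theta)\leq 0\}$.

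To establish non-emptiness of $\mathscr B(n+1)$ and the easier parts of $(\I)_{n+1}$, I plan to verify directly that $\beta_-(n+1),\beta_+(n+1)\in\mathscr B(n+1)$. At $\beta=\beta_-(n+1)$, strict convexity of $g$ together with $(\I)_n$(\ref{inductive: 4}) forces $g\geq 0$ on $\I_n+\w$ with equality only at $\theta_-^n$; the identity then yields $-h>0$ on all of $\I_n+\w$, so $\beta_-(n+1)\in\tilde{\mathscr B}(n+1)$, while $\I_{n+1,\beta_-(n+1)}+\w=\{\theta_-^n\}$ is non-empty. Analogously, at $\beta=\beta_+(n+1)$ strict convexity of $-h$ together with $(\I)_n$(\ref{inductive: 5}) gives $-h\geq 0$ and $g(\theta_+^n)<0$. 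Proposition~\ref{prop: monotonicity} combined with the $\beta$-monotonicity of $g,-h$ and a continuity argument then produces $\mathscr B(n+1)=[\beta_-(n+1),\beta_+(n+1)]$, settling $(\I)_{n+1}$(\ref{inductive: 1}); closedness and convexity of $\I_{n+1,\beta}$ follow immediately, since $\{g\leq 0\}$ is a closed convex sublevel set of a convex function and $\I_n$ is closed and convex by $(\I)_n$(\ref{inductive: 2}).

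The main obstacle will be items (\ref{inductive: 3})--(\ref{inductive: 5}) of $(\I)_{n+1}$, for which the extra hypothesis $v_{n+1}^\pm>0$ is needed. The separation $\phi_{n+1,\beta}^->\psi_{n+1,\beta}^+$ on $\d\I_{n+1,\beta}+\w$ is subtle precisely because, on that boundary, the level-$n$ strips $[\phi_n^-,\phi_n^+]$ and $[\psi_n^-,\psi_n^+]$ already touch ($g=0$). I would argue that the additional $M_{n+1}-M_n$ forward iterations passing $\phi_n^-$ to $\phi_{n+1}^-$ (and the analogous backward iterations for $\psi$) re-separate the level-$(n+1)$ strips, using Corollary~\ref{cor: nested sequence} to control the intermediate iterates together with the fact that the level-$n$ touching points lie well inside $\mathring\I_n+\w$, and invoking the contraction/expansion estimates of $(\A\ref{axiom: 1})$--$(\A\ref{axiom: 2})$. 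For (\ref{inductive: 4})--(\ref{inductive: 5}), an intermediate value argument in $\beta$ applied to the families $\beta\mapsto\phi_{n+1,\beta}^\pm-\psi_{n+1,\beta}^\mp$ --- with strict convexity supplied by $v_{n+1}^\pm>0$ and monotonicity in $\beta$ handled as above --- will yield the existence and uniqueness of $\beta_\pm(n+2)$ and $\theta_\pm^{n+1}$.

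Finally, for the size bound I let $\theta_0\in\I_n+\w$ be a minimiser of $g$. Taylor expansion in the unit direction $\vartheta=(\theta-\theta_0)/|\theta-\theta_0|$, combined with $\d_\vartheta^2 g\geq v_n^-$, yields
\[
g(\theta)\geq g(\theta_0)+\tfrac{v_n^-}{2}|\theta-\theta_0|^2\quad(\theta\in\I_n+\w).
\]
Since $\beta\in\mathscr B(n+1)$ gives $-h(\theta_0)\geq 0$, the identity forces $g(\theta_0)\geq -(H_n^\phi+H_n^\psi)$. For $\theta\in\I_{n+1,\beta}+\w$ we have $g(\theta)\leq 0$, hence $|\theta-\theta_0|^2\leq 2(H_n^\phi+H_n^\psi)/v_n^-$, and the triangle inequality produces $|\I_{n+1,\beta}|\leq\sqrt{8(H_n^\phi+H_n^\psi)/v_n^-}$.
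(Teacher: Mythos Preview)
Your approach is correct and essentially the same as the paper's: both identify $\I_{n+1,\beta}+\w$ as the sublevel set $\{g\leq 0\}$ of the strictly convex function $g=\phi_n^--\psi_n^+$, use Corollary~\ref{cor: nested sequence} to obtain the strict nesting needed for (\ref{inductive: 3})--(\ref{inductive: 5}), and bound $|\I_{n+1,\beta}|$ via the parabola comparison $g(\theta)\geq g(\theta_0)+\tfrac{v_n^-}{2}|\theta-\theta_0|^2$ together with $g(\theta_0)\geq -(H_n^\phi+H_n^\psi)$. One simplification worth noting: the paper dispatches (\ref{inductive: 3}) in a single line from the strict inclusions $\phi_{n+1}^->\phi_n^-$ and $\psi_{n+1}^+<\psi_n^+$ that Corollary~\ref{cor: nested sequence} already provides (via the half-open intervals $(c^-,c^+]$ and $[e^-,e^+)$), so the contraction/expansion estimates and interior-location arguments you anticipate are not needed.
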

\begin{proof}
Note that $\emptyset \neq \mathscr B(n+1) =[\beta_-(n+1),\beta_+(n+1)]$ by (\ref{inductive: 4}), (\ref{inductive: 5}) as well as Proposition~\ref{prop: monotonicity} and ($\A\ref{axiom: 6}$).

$\I_{n+1,\beta}$ is a sublevel set of
$\phi_{n,\beta}^--\psi^+_{n,\beta}$ and hence, it is closed.
Given two points $\theta_1,\theta_2 \in \I_{n+1,\beta}$, denote by $[\theta_1,\theta_2]\ssq\I_{n,\beta}$
the line joining the two points.
As $\d_\vartheta^2 \phi^-_{n,\beta}(\theta)-\d_\vartheta^2 \psi^{+}_{n,\beta}(\theta)\geq\nu_{n}^->0$ (with $\vartheta$ the unit vector in direction of $\theta_2-\theta_1$), we have $\phi_{n,\beta}^--\psi^+_{n,\beta}\leq 0$ on $[\theta_1,\theta_2]$ and thus convexity of $\I_{n+1,\beta}$.

By Corollary~\ref{cor: nested sequence}, $[\phi^-_{n+1,\beta}(\theta),\phi^+_{n+1,\beta}(\theta)]\ssq(\phi^-_{n,\beta}(\theta),\phi^+_{n,\beta}(\theta)]$ and $[\psi^-_{n+1,\beta}(\theta),\psi^+_{n+1,\beta}(\theta)]\ssq[\psi^-_{n,\beta}(\theta),\psi^+_{n,\beta}(\theta))$
for all $\theta \in \I_{n+1,\beta}+\w$, $\beta \in \mathscr{B}(n)$.
This ensures 
 $\phi_{n+1,\beta}^->\psi_{n+1,\beta}^+$ on $\d \I_{n+1,\beta}$ and
guarantees that
\begin{align*}
\beta_+(n+2)&\=\min\left\{\beta \in \mathscr{B}(n+1)\left| \ \exists \theta\in 
\I_{n+1,\beta}+\w \: \phi_{n+1,\beta}^+(\theta)\leq\psi_{n+1,\beta}^-(\theta)\right. \right\}
\\
&=\min\left\{\beta \in \mathscr{B}(n+1)\left| \ \exists \theta\in \I_{n+1,\beta}+\w \: \phi_{n+1,\beta}^+(\theta)=\psi_{n+1,\beta}^-(\theta)\right. \right\}
\intertext{as well as}
\beta_-(n+2)&\=\max \left\{\beta \in \operatorname{int}\mathscr{B}(n+1)\left|\ \beta< \beta_+(n+2),\ \forall \theta\in \I_{n+1,\beta}+\w \: \phi_{n+1,\beta}^-(\theta)\geq \psi_{n+1,\beta}^+(\theta)\right. \right\}
 \end{align*}
are well-defined.
Using $\nu_{n+1}^\pm >0$, we get the uniqueness of the tangent points
of $\phi_{n+1,\beta_-(n+1)}^-$ and $\psi_{n+1,\beta_-(n+1)}^+$ as well as of
$\phi_{n+1,\beta_+(n+1)}^+$ and $\psi_{n+1,\beta_+(n+1)}^-$ and conclude $\left(\I\right)_{n+1}$.

An upper bound for the size of $\I_{n+1,\beta}$ is given by the distance of the zeros of the function $\theta\mapsto \frac{\nu_{n}^-}2 \theta^2- \left(H_{n}^\phi+H_{n}^\psi\right )$. 
\end{proof}

\begin{rem}\label{rem: I0}
 By means of $(\mc A\ref{axiom: 5})$, we defined $\mathscr B(0)\ssq [0,1]$ to be a 
closed interval.
Further, we set
$\mc I_{0,\beta}\ssq \T^d$ to be closed and convex for each $\beta \in \mathscr B(0)$. 
Moreover, with $(\mc A\ref{axiom: 1})$ we have (\ref{inductive: 3}) and
similarly as in the proof of Lemma~\ref{lem: geometric} we can define $\beta_-(1)$ and $\beta_+(1)$ such that
(\ref{inductive: 4}) and (\ref{inductive: 5}) are verified, respectively, assuming
that $(\mc A\ref{axiom: 9})$ holds. In other words: $(\I)_{0}$ is true.
\end{rem}

We next provide estimates for the quantities used in Lemma~\ref{lem: geometric}.
\begin{lem}[cf. {\cite[Lemma~3.13]{Jäger}}]\label{lem: height of strips}
Let $n\in \N_0$, $\beta \in \tilde{\mathscr B}(n)$. Suppose $f_\beta$ verifies $(\I)_n$ as well as $(\mc A{\ref{axiom: 1}})-(\mc A{\ref{axiom: 4}})$ and $(\mc A{\ref{axiom: 8}})$.
If $(\mc F)_{n-1}$ holds, then $H_{n,\beta}^\phi \leq \left(\alpha_c^{b_n}\alpha_u^{1-b_n}\right)^{M_n}\cdot|C|$ and 
$H_{n,\beta}^\psi \leq \left(\alpha_e^{b_n}\alpha_l^{1-b_n}\right)^{-M_n}\cdot|E|$.
\end{lem}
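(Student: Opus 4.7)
The plan is to bound $H_n^\phi$; the estimate for $H_n^\psi$ will follow by the symmetric backward version of the same argument, using backward iteration, $(\A\ref{axiom: 2})$, $(\mc B2)_n$, and $(\mc C2)_n$ in place of their forward counterparts. By definition, $\phi^\pm_{n,\beta}(\theta)=f^{M_n}_{\beta,\theta-M_n\w}(c^\pm)$ for $\theta\in\I_{n,\beta}+\w$, so monotonicity and the mean value theorem reduce the bound on $H_n^\phi$ to an upper bound on $\d_x f^{M_n}_{\beta,\theta^*}(\xi)$ uniformly over $\xi\in C$ and $\theta^*\=\theta-M_n\w$ with $\theta\in\I_n+\w$. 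I would expand this derivative as the product
\[
\d_x f^{M_n}_{\beta,\theta^*}(\xi)=\prod_{k=0}^{M_n-1}\d_x f_{\beta,\theta^*+k\w}(y_k),\qquad y_k\= f^k_{\beta,\theta^*}(\xi),
\]
and estimate each factor by classifying the indices $k$ according to whether $y_k\in C$ or not.

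The starting point $(\theta^*,\xi)$ lies in $\A_n$ by definition, so I next need to verify that hypothesis $(\mc B1)_n$ of Lemmas~\ref{lem: duration of stay in contracting/expanding regions} and~\ref{lem: estimate for times spent in contracting/expanding regions} is satisfied, namely that $\theta^*\notin\mc Z^-_{n-1}$. This can be read off from the nesting $\I_n\ssq\I_j$ for $j\leq n-1$ combined with $(\mc F1)_{n-1}$, which forbids each $\I_j$ from meeting its own rotates up to order $2K_jM_j$. With $(\mc B1)_n$ in hand, Lemma~\ref{lem: estimate for times spent in contracting/expanding regions} yields $\mc P_0^{\mc L_n}(\theta^*,\xi)\geq b_n\mc L_n=b_n(M_n-1)$: on at least $b_n(M_n-1)$ of the first $M_n-1$ indices the orbit lies in $C$, hence $\d_x f\leq\alpha_c$ by $(\A\ref{axiom: 1})$; on the remaining such indices $(\mc C1)_n$ confines $y_k$ to $[e^-,c^+]$, hence $\d_x f\leq\alpha_u$ by $(\A\ref{axiom: 3})$. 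For the final index $k=M_n-1$, the base point $\theta-\w$ lies in $\I_n$, and the same combinatorial input yields $\I_n\cap\mc V_{n-1}=\emptyset$, so $(\mc C1)_n$ forces $y_{M_n-1}\in C$ and once more the derivative is at most $\alpha_c$.

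Multiplying the per-step estimates produces $\d_x f^{M_n}_{\beta,\theta^*}(\xi)\leq\alpha_c^{b_n(M_n-1)+1}\alpha_u^{(1-b_n)(M_n-1)}$, and a short comparison using $\alpha_c<\alpha_u$ and $b_n\leq 1$ shows that this is bounded above by $\left(\alpha_c^{b_n}\alpha_u^{1-b_n}\right)^{M_n}$; multiplying by $|C|$ completes the estimate. The main obstacle I expect is not conceptual but combinatorial: one must carefully verify $(\mc B1)_n$ for the starting point $(\theta^*,\xi)$, along with the auxiliary fact that the final step lands in $C$. Both points rest on exploiting the nesting of critical regions together with the disjointness recorded in $(\mc F1)_{n-1}$, rather than invoking the stronger $(\mc F)_n$, which is not available under the present hypothesis.
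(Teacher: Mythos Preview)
Your overall strategy---mean value theorem, chain-rule expansion of $\d_x f^{M_n}_{\beta,\theta^*}$, and the count from Lemma~\ref{lem: estimate for times spent in contracting/expanding regions}---is exactly the paper's approach; the paper's one-line proof ``Apply Corollary~\ref{cor: derivatives of iterates}'' unwinds to precisely what you wrote. Your handling of the final step $k=M_n-1$ and the subsequent comparison $\alpha_c^{b_n(M_n-1)+1}\alpha_u^{(1-b_n)(M_n-1)}\leq(\alpha_c^{b_n}\alpha_u^{1-b_n})^{M_n}$ are both fine.

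There is, however, a genuine gap in your verification of $(\mc B1)_n$ for $(\theta^*,\xi)\in\A_n$. You argue via the nesting $\I_n\ssq\I_j$ and $(\mc F1)_{n-1}$: if $\theta^*\in\I_n-(M_n-1)\w$ lay in $\I_j-l\w$ with $0\leq l\leq M_j-2$, then $\I_j\cap\bigl(\I_j+(M_n-1-l)\w\bigr)\neq\emptyset$. But $(\mc F1)_{n-1}$ at level $j$ only forbids $\I_j$ from meeting its rotates of order at most $2K_jM_j$, whereas $M_n-1-l$ is of size roughly $M_n$, which for $j<n-1$ (and, absent the construction of Lemma~\ref{lem: F 2 holds}, even for $j=n-1$) can vastly exceed $2K_jM_j$. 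So the contradiction does not go through. Your auxiliary claim $\I_n\cap\mc V_{n-1}=\emptyset$ \emph{does} work by this method, since there the relevant shifts are at most $M_j+1\leq 2K_jM_j$; but the claim $\theta^*\notin\mc Z^-_{n-1}$ does not.

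The paper sidesteps this by invoking the remark following Lemma~\ref{lem: duration of stay in contracting/expanding regions}: points of $\A_n$ satisfy $(\mc B1)_n$ ``because of $(\mc F2)_n$''. Strictly speaking, $(\mc F2)_n$ is not listed among the hypotheses of either Corollary~\ref{cor: derivatives of iterates} or the present lemma---this is a small inconsistency in the paper---but in every application (in particular inside Lemma~\ref{lem: geometric}, whose hypotheses include $(\mc F2)_{n+1}$ and hence $(\mc F2)_n$) it is available. You were right to flag that $(\mc F)_n$ is not part of the stated hypotheses, but the workaround you propose does not succeed; the honest fix is to add $(\mc F2)_n$ (or simply ``$\A_n$ satisfies $(\mc B1)_n$'') to the hypotheses, which is harmless downstream.
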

\begin{proof}
Apply Corollary~\ref{cor: derivatives of iterates}.
\end{proof}
The following statement is, from a technical point of view, the core part of this work.
It provides us with a positive lower bound for $\nu_n^{\pm}$ and thereby ensures that we can apply Lemma~\ref{lem: geometric}. The idea is to show that the second derivative of $\phi^\pm_n(\theta)-\psi^\mp_n(\theta)=f_{\theta-M_n\w}^{M_n}(c^\pm)-
f_{\theta+M_n\w}^{-M_n}(c^\mp)$ in direction $\vartheta$
only differs from
$\left(\d_\vartheta^2f_{\theta-\w}\right)\left(f_{\theta-M_n\w}^{M_n-1}(c^\pm)\right)$ by a
remainder term, whose supremum goes to zero exponentially fast with increasing $\alpha$.
Since $(\mc A\ref{axiom: 9})$ provides us with a lower bound $s$ for the second derivative
of $f$ with respect to the base coordinates in every direction, this proves the claim.
\begin{lem}\label{lem: nu n} Let $n\in \N_0$, $\beta \in \tilde{\mathscr B}(n)$.
Suppose $f_\beta$ satisfies $(\A\ref{axiom: 1})$-$(\A\ref{axiom: 4})$, $(\A\ref{axiom: 8})$, $(\A\ref{axiom: 9})$, $(\A\ref{axiom: 11})$-$(\A\ref{axiom: 16})$ and $(\mc F)_{n-1}$. Let there be $p\geq \sqrt{2}$ and $\alpha>1$ such that
\begin{align*}
 \alpha_c^{-1}=\alpha_e = \alpha^{\frac2{p}}, \qquad \alpha_l^{-1}=\alpha_u = \alpha^p
\end{align*} 
and assume $b_n>\frac{5p^2}{2+5p^2}$. Then  
\begin{align*}
\nu^{\pm}_{n} \geq
s- S^2 c \cdot \alpha^{-\left(\frac{2b_n}p-5(1-b_n)p\right)},
\end{align*}
where $c=c(\alpha,b_n)>0$ can be chosen to be monotonously decreasing in $\alpha$ and $b_n$.
\end{lem}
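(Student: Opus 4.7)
The plan is to expand $\partial_\vartheta^2 \phi_{n,\beta}^\pm$ and $\partial_\vartheta^2 \psi_{n,\beta}^\mp$ as iterated chain-rule sums along the corresponding orbits, isolate a single summand bounded below by $s$ via $(\A\ref{axiom: 9})$, and absorb all other terms into a remainder that decays as $\alpha\to\infty$.

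Write $g_j(\theta)\= f^j_{\beta,\theta-M_n\omega}(c^\pm)$ and $h_j(\theta)\= f^{-j}_{\beta,\theta+M_n\omega}(e^\mp)$, so that $\phi_{n,\beta}^\pm=g_{M_n}$ and $\psi_{n,\beta}^\mp=h_{M_n}$. The fibrewise recursions
\[
 g_{j+1}=f_{\beta,\theta-(M_n-j)\omega}\circ g_j,\qquad h_{j+1}=\bigl(f_{\beta,\theta+(M_n-j-1)\omega}\bigr)^{-1}\circ h_j
\]
yield, upon twofold differentiation in a direction $\vartheta\in\mathbb S^{d-1}$, expansions of the form $\partial_\vartheta^2 g_{M_n}=\sum_{j=0}^{M_n-1}T_j^\phi\cdot W_j^\phi$ and $\partial_\vartheta^2 h_{M_n}=\sum_{j=0}^{M_n-1}T_j^\psi\cdot W_j^\psi$, where the weights $W_j^\phi=\prod_{i=j+1}^{M_n-1}\partial_x f$ and $W_j^\psi=\prod_{i=j+1}^{M_n-1}\partial_y f^{-1}$ are evaluated along the respective orbits, and each $T_j$ is a trinomial in $\partial_\vartheta^2 f$, $\partial_\vartheta\partial_x f$ and $\partial_x^2 f$ (respectively their inverse analogues) together with $\partial_\vartheta g_j$ (respectively $\partial_\vartheta h_j$). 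The summand with $j=M_n-1$ in the $\phi$-expansion carries an empty weighting product and contains the distinguished term $\partial_\vartheta^2 f_{\beta,\theta-\omega}(g_{M_n-1}(\theta))$; since $\theta-\omega\in\mc I_{n,\beta}\subseteq\mc I_{0,\beta}$ and $g_{M_n-1}(\theta)\in C$ -- the latter by Corollary~\ref{cor: nested sequence} applied with index $0$ combined with implication~\ref{implication: x not in C for small times..} of Lemma~\ref{lem: duration of stay in contracting/expanding regions} -- this contributes at least $s$ thanks to $(\A\ref{axiom: 9})$.

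For the remainder I would first bound $|\partial_\vartheta g_j|$ and $|\partial_\vartheta h_j|$ uniformly by $K=S/(1-\alpha_c^{b_n}\alpha_u^{1-b_n})$, which is finite because $b_n>5p^2/(2+5p^2)$ forces $\alpha_c^{b_n}\alpha_u^{1-b_n}<1$. Case-splitting each orbit step via $(\mc C1)_n$ into $C$ (where $(\A\ref{axiom: 13})$, $(\A\ref{axiom: 14})$, $(\A\ref{axiom: 15})$, $(\A\ref{axiom: 16})$ give $\alpha_c$-size bounds) and $[e^-,c^-)$ (where the same axioms give $\alpha_u^2$-size bounds), and bounding each weighting product by $(\alpha_c^{b_n}\alpha_u^{1-b_n})^{M_n-1-j}$ using Corollary~\ref{cor: derivatives of iterates} on subintervals, the resulting geometric sums yield an estimate of order $S^2\,\alpha^{5(1-b_n)p-2b_n/p}$ for all non-dominant $\phi$-contributions. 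The entire $\psi$-expansion is of the same order because $\partial_y f^{-1}<\alpha_e^{-1}<1$ at every step, leaving no surviving main term. Combining the main term with the two remainders produces the claimed inequality, and a direct inspection of the geometric prefactor shows that $c(\alpha,b_n)$ may be chosen monotone decreasing in both variables.

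The principal obstacle is the bookkeeping underlying the remainder bound. At every index $i\in\{j+1,\dots,M_n-1\}$ one must decide via $(\mc C1)_n$ which of the two regime-dependent bounds in $(\A\ref{axiom: 11})$--$(\A\ref{axiom: 16})$ applies, and one has to verify that the worst-case cross-term generated by the twofold chain rule -- for instance a quadratic contribution $\partial_x^2 f\cdot(\partial_\vartheta g_j)^2$ at a non-contracting point, padded by an extra fiber-derivative factor bridging the excursion back to the dominant step at $j=M_n-1$ -- produces exactly the exponent $5(1-b_n)p-2b_n/p$ and that no combination is worse. This is a routine but lengthy inspection and constitutes the bulk of the actual computation.
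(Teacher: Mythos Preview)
Your overall strategy---expand $\partial_\vartheta^2\phi_n^\pm$ and $\partial_\vartheta^2\psi_n^\mp$ by the chain rule, isolate the term $(\partial_\vartheta^2 f_{\theta-\omega})(g_{M_n-1})\geq s$ via $(\A\ref{axiom: 9})$, and bound the rest---is exactly the paper's. Two steps in your sketch, however, do not go through as stated.

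\textbf{The backward part cannot be dismissed.} Your claim that ``$\partial_y f^{-1}<\alpha_e^{-1}<1$ at every step'' is false: by \ref{implication: x not in E for small negative times..} the backward orbit leaves $E$ whenever $\theta_{-k}\in\mc W_{n-1}$, and at such steps one only has $\partial_x f^{-1}<\alpha_l^{-1}=\alpha^p$. Worse, the second-order quantities $\partial_x^2 f^{-1}$ and $\partial_\vartheta\partial_x f^{-1}$ pick up factors of size $\alpha_u^2\alpha_l^{-3}=\alpha^{5p}$ at bad steps (cf.~the inverse-function identities for these derivatives). The paper carries out the same bad-step counting for $\psi$ as for $\phi$, and it is precisely this backward analysis that produces the exponent $\tfrac{2b_n}{p}-5(1-b_n)p$ appearing in the lemma; the forward analysis alone only gives the better exponent $\tfrac{2b_n}{p}-2(1-b_n)p$. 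So by discarding the $\psi$-part you would miss the actual bottleneck and could not justify the stated bound.

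\textbf{The uniform bound on $|\partial_\vartheta g_j|$ is not available.} The constant $K=S/(1-\alpha_c^{b_n}\alpha_u^{1-b_n})$ would require $\prod_{i=k+1}^{j-1}\partial_x f\leq(\alpha_c^{b_n}\alpha_u^{1-b_n})^{j-1-k}$ for \emph{every} intermediate $j$, but Lemma~\ref{lem: estimate for times spent in contracting/expanding regions} and Corollary~\ref{cor: derivatives of iterates} only control products terminating at $M_n-1$ (or at the hitting times $\mc L^{(i)}$). For $j$ inside an excursion of length $\sim M_{n-1}$ the partial product can be as large as $\alpha_u^{M_{n-1}}$, so $|\partial_\vartheta g_j|$ is not uniformly bounded in $n$. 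The paper avoids this by \emph{not} bounding $\partial_\vartheta g_j$ in isolation: it substitutes the expansion $\partial_\vartheta g_l=\sum_m(\partial_\vartheta f)\prod_{m+1}^{l-1}\partial_x f$ directly into the outer sum, so that every resulting product runs to $M_n-1$ with at most one index omitted. The inner factor $\prod_{k+1}^{l-1}$ is then bounded by $\prod_{k+1,\,x_j\notin C}^{l-1}\alpha_u$ and absorbed into the bad-step count of the outer product (since $[k+1,l-1]\subseteq[m+1,M_n-1]$), at the cost of squaring $\alpha_u$ on each bad step. This absorption trick is what makes the bookkeeping close, and it is missing from your outline.
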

\begin{proof}
For reasons of readability, we omit the index $\beta$ in the following.
Let us consider $\frac{\d^2}{\d\vartheta^2} \phi_{n}^\pm(\theta)$ ($\theta \in \mc I_n+\w$ and $\vartheta \in \mathbb S^{d-1}$). Set 
$\theta_0 \= \theta-M_n\w$.
Then 
\begin{align}\label{eq: d phi}
\begin{split}
\frac{\d}{\d\vartheta} \phi_{n}^\pm (\theta)&=\d_\vartheta f_{\theta_0}^{M_n}(c^\pm)=
\left(\d_\vartheta f_{\theta_{M_n-1}}\right)(x_{M_n-1})+\left(\d_x f_{\theta_{M_n-1}}\right)(x_{M_n-1})\cdot \d_\vartheta f_{\theta_0}^{M_n-1}(c^\pm)=\ldots= \\
&=\left(\d_\vartheta f_{\theta_{M_n-1}}\right)(x_{M_n-1})+\sum_{k=0}^{M_n-2} \left(\d_\vartheta f_{\theta_k}\right)(x_k) \cdot \left(\d_x f_{\theta_{k+1}}^{M_{n}-k-1}\right)(x_{k+1}),
\end{split}
\end{align}
where we used
\begin{align}\label{eq: d2 f}
\left(\d_x f_{\theta_{k+1}}^{M_{n}-k-1}\right)(x_{k+1})=\prod_{j=k+1}^{M_n-1} \left(\d_x f_{\theta_j}\right)(x_j) \quad (k=-1,0,\ldots,M_n-1).
\end{align}
Differentiating once more gives
\begin{align*}
\frac{\d^2}{\d\vartheta^2} \phi^\pm_n(\theta)=& \ \left(\d_\vartheta^2 f_{\theta_{M_n-1}}\right)(x_{M_n-1})\!+\!
\left(\d_x \d_\vartheta f_{\theta_{M_n-1}}\right)(x_{M_n-1})\!\cdot\!  \d_\vartheta f^{M_n-1}_{\theta_0}(c^\pm)\\
& +\sum_{k=0}^{M_n-2} \left(\d_\vartheta f_{\theta_k}\right)(x_k) \cdot \d_\vartheta \left (\d_x f_{\theta_{k+1}}^{M_{n}-k-1}\right)(x_{k+1})+ \left [\d_\vartheta \left(\d_\vartheta f_{\theta_k}\right)(x_k)\right] \cdot \left(\d_x f_{\theta_{k+1}}^{M_{n}-k-1}\right)(x_{k+1}).
\end{align*}
Further,
\begin{align*}
 \d_\vartheta  \left(\d_x f_{\theta_{k+1}}^{M_{n}-k-1}\right)(x_{k+1})&=\d_\vartheta \prod_{j=k+1}^{M_n-1} \left(\d_x f_{\theta_j}\right)(x_j)= \sum_{l=k+1}^{M_n-1} \d_\vartheta \left(\d_x f_{\theta_l}\right)(x_l) \prod_{\substack{j=k+1\\j \neq l}}^{M_n-1} \left(\d_x f_{\theta_j}\right)(x_j) \\
&=\sum_{l=k+1}^{M_n-1} \left[\left(\d_\vartheta\d_x f_{\theta_l}\right)(x_l)+
\left(\d_x^2 f_{\theta_l}\right)(x_l) \cdot \d_\vartheta f_{\theta_0}^l(c^\pm)\right] \prod_{\substack{j=k+1\\j \neq l}}^{M_n-1} \left(\d_x f_{\theta_j}\right)(x_j)
\end{align*}
and
\begin{align*}
 \d_\vartheta \left(\d_\vartheta f_{\theta_k}\right)(x_k)=\left(\d_\vartheta^2 f_{\theta_k}\right)(x_k)+
\left(\d_x \d_\vartheta f_{\theta_k}\right)(x_k) \cdot \d_\vartheta f^k_{\theta_0}(c^\pm).
\end{align*}
Altogether, we have
\begin{align}\label{eq: d 2 phi}
\begin{split}
 \frac{\d^2}{\d\vartheta^2} \phi_n^\pm(\theta)
=& \left(\d_\vartheta^2 f_{\theta_{M_n-1}}\right)(x_{M_n-1})\!+\!
\left(\d_x \d_\vartheta f_{\theta_{M_n-1}}\right)(x_{M_n-1})\!\cdot\!  \d_\vartheta f^{M_n-1}_{\theta_0}(c^\pm)\\
&+\sum_{k=0}^{M_n-2} \left(\d_\vartheta f_{\theta_k}\right)(x_k)  \left(\sum_{l=k+1}^{M_n-1} \left[\!\left(\d_\vartheta\d_x f_{\theta_l}\right)(x_l)\!+\!
\left(\d_x^2 f_{\theta_l}\right)(x_l)\! \cdot  \! \d_\vartheta f_{\theta_0}^l(c^\pm)\right] \prod_{\substack{j=k+1\\j \neq l}}^{M_n-1} \left(\d_x f_{\theta_j}\right)(x_j)\right)\\
& + \left[\left(\d_\vartheta^2 f_{\theta_k}\right)(x_k)\!+\!
\left(\d_x \d_\vartheta f_{\theta_k}\right)(x_k)\!\cdot\!  \d_\vartheta f^k_{\theta_0}(c^\pm)\right] \left(\d_x f_{\theta_{k+1}}^{M_{n}-k-1}\right)(x_{k+1}).
\end{split}
\end{align}
It is our goal to show that the long times spent in the contracting region
keep the derivatives small, such that
$\left(\d_\vartheta^2 f_{\theta_{M_n-1}}\right)(x_{M_n-1})$ becomes the
leading term.
The part which is the hardest to control is
\begin{align*}
\sum_{k=0}^{M_n-2} \left(\d_\vartheta f_{\theta_k}\right)(x_k)  \sum_{l=k+1}^{M_n-1} \left(\d_x^2 f_{\theta_l}\right)(x_l)\! \cdot  \! \d_\vartheta f_{\theta_0}^l(c^\pm) \prod\limits_{\substack{j=k+1\\j \neq l}}^{M_n-1} \left(\d_x f_{\theta_j}\right)(x_j),
\end{align*}
with $\d_\vartheta f_{\theta_0}^l(c^\pm)=\sum_{m=0}^{l-1} \left(\d_\vartheta f_{\theta_m}\right)(x_m) \cdot \left(\d_x f_{\theta_{m+1}}^{l-m-1}\right)(x_{m+1})$ as in equation~(\ref{eq: d phi}). Using $(\mc A\ref{axiom: 11})$, we see that it is bounded from above by
\begin{align}\label{eq: intermediate lower bound on d 2 phi}
S^2\sum_{k=0}^{M_n-2}\sum_{l=k+1}^{M_n-1}\sum_{m=0}^{l-1}\left|\left(\d_x^2 f_{\theta_l}\right)(x_l)\right|\left(\d_x f_{\theta_{m+1}}^{l-m-1}\right)(x_{m+1})\prod_{\substack{j=k+1\\j \neq l}}^{M_n-1} \left(\d_x f_{\theta_j}\right)(x_j).
\end{align}
If $m\leq k$, then
\begin{align*}
& 
\left|\left(\d_x^2 f_{\theta_l}\right)(x_l)\right| 
\left(\d_x f_{\theta_{m+1}}^{l-m-1}\right)(x_{m+1})\prod\limits_{\substack{j=k+1\\j \neq l}}^{M_n-1} \left(\d_x f_{\theta_j}\right)(x_j)
=
\left|\left(\d_x^2 f_{\theta_l}\right)(x_l)\right| 
\prod\limits_{\substack{j=m+1\\j \neq l}}^{M_n-1} \left(\d_x f_{\theta_j}\right)(x_j)\cdot
\prod\limits_{j=k+1}^{l-1} \left(\d_x f_{\theta_j}\right)(x_j)
\\
&\leq 
\left|\left(\d_x^2 f_{\theta_l}\right)(x_l)\right|
\prod\limits_{\substack{j=m+1\\j \neq l\\ x_j \in C }}^{M_n-1} \alpha_c \cdot
\prod\limits_{\substack{j=m+1\\j \neq l\\ x_j \notin C }}^{M_n-1} \alpha_u
\prod\limits_{\substack{j=k+1\\ x_j \notin C}}^{l-1} \alpha_u
\leq 
\left|\left(\d_x^2 f_{\theta_l}\right)(x_l)\right| 
\prod\limits_{\substack{j=m+1\\j \neq l\\ x_j \in C }}^{M_n-1} \alpha_c \cdot
\prod\limits_{\substack{j=m+1\\j \neq l\\ x_j \notin C }}^{M_n-1} \alpha_u^2
\leq \alpha_c^{b_n (M_n-m-1)} \alpha_u^{2 (1-b_n)(M_n-m-1)} \\
& \stackrel{(\A\ref{axiom: 14})}{=}\alpha_1^{-(M_n-m-1)},
\end{align*}
where we used Lemma~\ref{lem: estimate for times spent in contracting/expanding regions} in the last estimate and where we set $\alpha_1\=\alpha_c^{-b_n}\alpha_u^{-2(1-b_n)}=\alpha^{-2\left(p(1-b_n)-\frac{b_n}p\right)}$. 
For $m>k$ we get an analogous result with $m$ replaced by $k$.
Hence, (\ref{eq: intermediate lower bound on d 2 phi}) is bounded by
\begin{align*}
& S^2\sum_{k=0}^{M_n-2}\sum_{l=k+1}^{M_n-1}
\left(\sum_{m=0}^{k} \alpha_1^{-(M_n-1-m)}+\sum_{m=k+1}^{l-1} \alpha_1^{-(M_n-1-k)} \right)\\
& \leq S^2\sum_{k=0}^{M_n-2}\sum_{l=k+1}^{M_n-1}
\left(\alpha_1^{-(M_n-1-k)} \sum_{m=0}^{k} \alpha_1^{-m}+ \alpha_1^{-(M_n-1-k)} \sum_{m=k+1}^{l-1} 1  \right)
\\
& \leq S^2\sum_{k=0}^{M_n-2}
\left(\alpha_1^{-(M_n-1-k)} \frac1{1-\alpha_1^{-1}} (M_n-k-1)+ \alpha_1^{-(M_n-1-k)} \sum_{l=k+1}^{M_n-1} (l-k-1)  \right)\\
&\leq S^2 \frac2{1-\alpha_1^{-1}} \sum_{k=0}^{M_n-2}
\alpha_1^{-(M_n-1-k)} (M_n-k-1)^2\leq 
S^2 \frac{2\alpha_1}{\alpha_1-1} \sum_{l=1}^{M_n-1} l^2 \alpha_1^{-l}\\
& \leq S^2 \tilde c(\alpha_1) \cdot \alpha_1^{-1},
\end{align*}
where $\tilde c(\alpha)\=\frac{2\alpha}{\alpha-1}\sum_{l=1}^{\infty} l^2 \alpha^{-l+1}$ for each $\alpha>1$.\footnote{Notice that $\alpha_1>1$, since $b_n>\frac{5p^2}{2+5p^2}>\frac{p^2}{p^2+1}$.} Note that $\tilde c$ is monotonously decreasing in $\alpha$. 
The other addends of (\ref{eq: d 2 phi}) can be treated
in a similar fashion, which eventually gives
\begin{align*}
 \frac{\d^2}{\d\vartheta^2} \phi_n^\pm(\theta)\geq
\left(\d_\vartheta^2 f_{\theta_{M_n-1}}\right)(x_{M_n-1})-5S^2\tilde c(\alpha_1)\cdot \alpha_1^{-1}\stackrel{(\mc A\ref{axiom: 9})}{\geq} s -5S^2\tilde c(\alpha_1)\cdot \alpha_1^{-1}.
\end{align*}

Now, let us consider
$\frac{\d^2}{\d\vartheta^2}\psi_n^\pm(\theta)= f^{-M_n}_{\theta+ M_n \w} (e^\pm)$ for $\vartheta \in \mathbb S^{d-1}$. We proceed similarly as before
but this time considering the map $f^{-1}$ instead of $f$, that is $\theta_k=\theta_0-k\w$ (with $\theta_0=\theta+M_n \w$) and $x_k=f_{\theta_0}^{-k}(e^\pm)$.
\begin{align}\label{eq: derivative inverse}
\begin{split}
\d_x f_{\theta}^{-1}(x)&=\frac1{(\d_x f_{\theta-\w})\left(f^{-1}_\theta(x)\right)}
\quad \left(\Rightarrow 0<\d_x f_{\theta}^{-1}(x)<\alpha_e^{-1} \ \left(x\in E, \theta\notin \I_0+\w \right) \right),\\
\d_\vartheta f_{\theta}^{-1}(x)&=-\frac{(\d_\vartheta f_{\theta-\w})\left(f^{-1}_\theta(x)\right)}{(\d_x f_{\theta-\w})\left(f^{-1}_\theta(x)\right)}=
-(\d_\vartheta f_{\theta-\w})\left(f^{-1}_\theta(x)\right)\cdot \d_x f_{\theta}^{-1}(x),
\end{split}
\end{align}
for $(\theta,x)\in f(\T^d\times \X)\setminus\left\{(\theta,x)\:Df^{-1} \text{is singular}\right\}\supseteq f(\T^d\times[e^-,c^+])$.
Hence,
\begin{align}
\label{eq: d2 d2 f inverse}
\d_x^2 f_{\theta}^{-1}(x)=&\ -\frac{(\d_x^2 f_{\theta-\w})\left(f^{-1}_\theta(x)\right)\cdot \d_x f^{-1}_\theta(x)}{\left[(\d_x f_{\theta-\w})\left(f^{-1}_\theta(x)\right)\right]^2}=
-(\d_x^2 f_{\theta-\w})\left(f^{-1}_\theta(x)\right)\cdot \left(\d_x f_{\theta}^{-1}(x)\right)^3
\intertext{such that $\left |\d_x^2 f_{\theta}^{-1}(x)\right|\leq \alpha_u^2 \alpha_l^{-3}$ for $x\in f_{\theta-\w}([e^-,c^+])$,}
\begin{split}\label{eq: d1 d2 f inverse} 
\d_\vartheta\d_x f_{\theta}^{-1}(x)=&\ -\frac{(\d_\vartheta\d_x f_{\theta-\w})\left(f^{-1}_\theta(x)\right)+(\d_x^2 f_{\theta-\w})\left(f^{-1}_\theta(x)\right) \cdot\d_\vartheta f^{-1}_\theta(x)}{\left[(\d_x f_{\theta-\w})\left(f^{-1}_\theta(x)\right)\right]^2}
\\
=&\ -(\d_\vartheta\d_x f_{\theta-\w})\left(f^{-1}_\theta(x)\right) \cdot\left(\d_x f_{\theta}^{-1}(x)\right)^2-(\d_\vartheta f_{\theta-\w})\left(f^{-1}_\theta(x)\right)\cdot
\d_x^2 f_{\theta}^{-1}(x)
\end{split}
\end{align}
and thus 
$\left|\d_\vartheta\d_x f_{\theta}^{-1}(x)\right|\leq 2 S \alpha_u^2 \alpha_l^{-3}$
for $x\in f_{\theta-\w}([e^-,c^+])$. Finally,
\begin{align}
\label{eq: d1 d1 f inverse}
\begin{split}
\d_\vartheta^2 f_{\theta}^{-1}(x)=&
-(\d_\vartheta^2 f_{\theta-\w})\left(f^{-1}_\theta(x)\right)\cdot\d_x f_{\theta}^{-1}(x)
-(\d_x\d_\vartheta f_{\theta-\w})\left(f^{-1}_\theta(x)\right)\cdot\d_\vartheta f_{\theta}^{-1}(x) \d_x f_{\theta}^{-1}(x)\\
&\qquad\qquad\qquad\qquad\qquad\qquad\quad\qquad \ \ -(\d_\vartheta f_{\theta-\w})\left(f^{-1}_\theta(x)\right)\cdot\d_\vartheta\d_x f_{\theta}^{-1}(x)\\
=&-(\d_\vartheta^2 f_{\theta-\w})\left(f^{-1}_\theta(x)\right)\cdot\d_x f_{\theta}^{-1}(x)
-2(\d_\vartheta f_{\theta-\w})\left(f_\theta^{-1}(x)\right)\cdot \d_\vartheta\d_xf_\theta^{-1}(x)
-\left((\d_\vartheta f_{\theta-\w})\left(f^{-1}_\theta(x)\right)\right)^2\cdot
\d_x^2 f_\theta^{-1}(x).
\end{split}
\end{align}
As in the forward case, we get
\begin{align}
\label{eq: d2 psi}
\begin{split}
 \frac{\d^2}{\d\vartheta^2} \psi_n^\pm(\theta)
=& \sum_{k=0}^{M_n-1} \left(\d_\vartheta f^{-1}_{\theta_k}\right)(x_k)  \left(\sum_{l=k+1}^{M_n-1} \left[\!\left(\d_\vartheta\d_x f^{-1}_{\theta_l}\right)(x_l)\!+\!
\left(\d_x^2 f^{-1}_{\theta_l}\right)(x_l)\! \cdot  \! \d_\vartheta f^{-l}_{\theta_0}(e^\pm)\right] \prod_{\substack{j=k+1\\j \neq l}}^{M_n-1} \left(\d_x f^{-1}_{\theta_j}\right)(x_j)\right)\\
& \qquad \qquad \qquad \ \ + \left[\left(\d_\vartheta^2 f^{-1}_{\theta_k}\right)(x_k)\!+\!
\left(\d_\vartheta \d_x f^{-1}_{\theta_k}\right)(x_k)\!\cdot\!  \d_\vartheta f^{-k}_{\theta_0}(e^\pm)\right] \left(\d_x f_{\theta_{k+1}}^{-(M_{n}-k-1)}\right)(x_{k+1}).
\end{split}
\end{align}
Similarly as before, we want to show that the long times spent in the expanding region
keep all the derivatives small (as we consider iterates of the inverse map).
Since $\d_\vartheta f_{\theta_0}^{-l}(e^\pm)=\sum_{m=0}^{l-1} \left(\d_\vartheta f_{\theta_m}^{-1}\right)(x_m) \cdot \left(\d_x f_{\theta_{m+1}}^{-(l-m-1)}\right)(x_{m+1})$, the term which is the hardest to control in (\ref{eq: d2 psi}) is
\begin{align*}
\begin{split}
&\sum_{k=0}^{M_n-1} \left(\d_\vartheta f^{-1}_{\theta_k}\right)(x_k)
\sum_{l=k+1}^{M_n-1}\left(\d_x^2 f^{-1}_{\theta_l}\right)(x_l) \d_\vartheta f^{-l}_{\theta_0}(e^\pm)
\prod_{\substack{j=k+1\\j \neq l}}^{M_n-1} \left(\d_x f^{-1}_{\theta_j}\right)(x_j)\\
&= -\sum_{k=0}^{M_n-2} (\d_\vartheta f_{\theta_{k+1}})\left(x_{k+1}\right)\d_x f_{\theta_k}^{-1}(x_k)
 \sum_{l=k+1}^{M_n-1}
  (\d_x^2 f^{-1}_{\theta_{l}})(x_{l}) 
\sum_{m=0}^{l-1} 
(\d_\vartheta f_{\theta_{m+1}})\left(x_{m+1}\right)\d_x f_{\theta_m}^{-1}(x) 
\left(\d_x f_{\theta_{m+1}}^{-(l-m-1)}\right)(x_{m+1})\\
&\qquad \qquad\qquad\qquad\qquad\qquad\qquad\qquad\qquad\qquad\qquad\qquad\qquad
\qquad\qquad\quad \times \prod_{\substack{j=k+1\\j \neq l}}^{M_n-1} \left(\d_x f^{-1}_{\theta_j}\right)(x_j).
\end{split}
\end{align*}
An upper bound for this expression
reads
\begin{align}\label{eq: backward hardest term}
 &S^2 \sum_{k=0}^{M_n-2}\sum_{l=k+1}^{M_n-1}\sum_{m=0}^{l-1}
\left |(\d_x^2 f^{-1}_{\theta_{l}})(x_{l})\right|\cdot
\prod_{\substack{n=m}}^{l-1} \left(\d_x f^{-1}_{\theta_n}\right)(x_n)\prod_{\substack{j=k\\j\neq l}}^{M_n-1} \left(\d_x f^{-1}_{\theta_j}\right)(x_j).
\end{align}
We deal similarly with (\ref{eq: backward hardest term}) as we did with
(\ref{eq: intermediate lower bound on d 2 phi}).
Suppose $ m>k$. 
Since
$\left(\d_xf^{-1}_{\theta}(x)\right)^2<\alpha_l^{-2}<\alpha_u^2\alpha_l^{-3}$, we get 
\begin{align*}
\left|(\d_x^2 f^{-1}_{\theta_{l}})(x_{l})\right|\prod\limits_{n=m}^{l-1} \left(\d_x 
f^{-1}_{\theta_n}\right)(x_n)\prod\limits_{\substack{j=k\\j\neq l}}^{M_n-1} 
\left(\d_xf^{-1}_{\theta_j}\right)(x_j)
&\leq \left|(\d_x^2 f^{-1}_{\theta_{l}})(x_{l})\right|
\prod\limits_{\substack{j=k \wedge j\neq l\\ x_j\in E\wedge \theta_j \notin \I_0+\w}}^{M_n-1}
\left(\d_xf^{-1}_{\theta_j}\right)(x_j)
\prod\limits_{\substack{j=k \wedge j\neq l\\ x_j\notin E\vee \theta_j \in \I_0+\w}}^{M_n-1} \left(\d_x 
f^{-1}_{\theta_n}\right)^2(x_n)
\\
&\stackrel{(\mc A\ref{axiom: 15})}{\leq} \alpha_e^{-b_n(M_n-k)}\left(\alpha_u^2\alpha_l^{-3}\right)^{(1-b_n)(M_n-k)}\leq \alpha_2^{-(M_n-k)}, 
\end{align*}
where $\alpha_2\=
\alpha_e^{b_n}\left(\alpha_u^2\alpha_l^{-3}\right)^{-(1-b_n)}=\alpha^{\frac{2b_n}p-5(1-b_n)p}$.
For $m\leq k$ we get an analogous result. Hence, (\ref{eq: backward hardest term}) is bounded by
\begin{align*}
&S^2\sum_{k=0}^{M_n-2}\sum_{l=k+1}^{M_n-1}
\left(\alpha_2^{-(M_n-k)} \sum_{m=0}^{k} \alpha_2^{-m} +\alpha_2^{-(M_n-k)} \sum_{m=k+1}^{l-1}\right)
\\
&\leq S^2 \frac{2\alpha_2}{\alpha_2-1} \sum_{k=0}^{M_n-2}
\alpha_2^{-(M_n-k)} (M_n-k)^2 
\leq S^2 \tilde c(\alpha_2) \cdot \alpha_2^{-2},
\end{align*}
with $\tilde c$ as in the forward case.\footnote{$\alpha_2>1$, since $b>1-\frac2{2+5p^2}$.}
Nevertheless, notice that
\begin{align*}
& \left|\sum_{k=0}^{M_n-1} \left(\d_\vartheta^2 f^{-1}_{\theta_k}\right)(x_k)\left(\d_x f_{\theta_{k+1}}^{-(M_{n}-k-1)}\right)(x_{k+1})\right|
\\
&
\leq\sum_{k=0}^{M_n-1} S^2 \left(\d_x f_{\theta_{k}}^{-(M_{n}-k)}\right)(x_{k})
+2S \left|\d_\vartheta\d_xf_{\theta_k}^{-1}(x_k)\right|\left(\d_x f_{\theta_{k+1}}^{-(M_{n}-k-1)}\right)(x_{k+1})
+S^2 \d_x^2 f_{\theta_k}^{-1}(x_k)\left(\d_x f_{\theta_{k+1}}^{-(M_{n}-k-1)}\right)(x_{k+1})
\\
&
\leq 3S^2 \tilde c(\alpha_2) \alpha_2^{-1},
\end{align*}
where we used (\ref{eq: d1 d1 f inverse}) in the first step.
Altogether, we eventually get
\begin{align*}
\frac{\d^2}{\d\vartheta^2} \psi^\pm(\theta) \leq 6 S^2 \tilde c(\alpha_2)\cdot \alpha_2^{-1}.
\end{align*}
Setting $c(\alpha,b_n)\= 6 \tilde c\left(\alpha^{\frac{2b_n}p-5(1-b_n)p}\right)+5\tilde c\left(\alpha^{\frac{2b_n}p-2(1-b_n)p}\right)$
yields the desired estimate.
\end{proof}
\subsection{Existence of a sink-source orbit}\label{sect: critical intervals}
In Section~\ref{sect: combinatorical considerations}, we proved the existence of a
sink source orbit for $f_\beta$ provided there are strictly increasing sequences $\left(M_n\right)_{n\in \N_0},\left(K_n\right)_{n\in \N_0} \in \N^{\N_0}$ such that the inductively defined
critical regions $\I_{n,\beta}$ are non-empty and satisfy $(\mc F \ref{axiom: diophantine 1})_n, (\mc F \ref{axiom: diophantine 2})_n$.
By means of the geometric considerations of the last section, we are now able to
show that for some $\beta$ such sequences $\left(M_n\right)_{n\in \N_0},\left(K_n\right)_{n\in \N_0}$ actually do exist. This finishes the proof of Theorem~\ref{thm: sink source orbit introduction}.

As a matter of fact, we are going to prove that for some $\beta$, the critical regions 
satisfy a slightly stronger version of
$(\mc F \ref{axiom: diophantine 1})_n$, that is, we will show
\begin{align}\label{eq: defn diophantine prime}
\tag*{$(\mc F1)_n'$}
d\left(\mc I_{j,\beta} \, , \bigcup_{k=1}^{2K_jM_j} \mc I_{j,\beta} +k\w\right)>|\mc I_{j,\beta}|
\end{align}
for $j=0,\ldots,n$ and $n\in \N_0$.
\begin{lem}[cf. {\cite[Lemma~3.16]{Jäger}}]\label{lem: F 2 holds}
Assume $(\I)_{n-1}$ for $n\in \N$. Suppose $f_\beta$ verifies $(\A\ref{axiom: 6})$, 
$(\mc A\ref{axiom: 10})$
and we are given $K_l,M_l$ ($l=0,\ldots,n-1$) such that $\left(\mc F1\right)_{n-1}'$, $\left(\mc F\ref{axiom: diophantine 2}\right)_{n-1}$ hold
for $\beta \in \mathscr B(n)$.
If $\sum_{j=0}^{n-1}\frac{1}{K_{j}} \leq \frac16$, then there exists $M_n\in [K_{n-1}M_{n-1},2K_{n-1}M_{n-1}]$ such that $\left(\mc F\ref{axiom: diophantine 2}\right)_n$
holds for $f_\beta$ ($\beta \in \mathscr B(n)$).
\end{lem}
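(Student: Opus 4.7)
My plan is as follows. Since $\left(\mc F\ref{axiom: diophantine 2}\right)_{n-1}$ is in the hypotheses, the only new constraint is the $j=n$ case, i.e., for every $j\leq n-1$ and $l\in\{-(M_j-1),\ldots,M_j+1\}$,
\[
\mc I_{n,\beta}\cap(\mc I_{j,\beta}+(l+M_n-1)\w)=\emptyset\quad\text{and}\quad\mc I_{n,\beta}\cap(\mc I_{j,\beta}+(l-M_n-1)\w)=\emptyset.
\]
Using the nesting $\mc I_{n,\beta}\ssq \mc I_{j,\beta}$ (Remark after Definition~\ref{def: critical regions}), both conditions follow from $l\pm M_n\mp 1\notin B_j$, where $B_j\=\{m\in\Z:\mc I_{j,\beta}\cap(\mc I_{j,\beta}+m\w)\neq\emptyset\}=\{m\in\Z:m\w\in D_j\}$ and $D_j\=\mc I_{j,\beta}-\mc I_{j,\beta}$ is the (symmetric, convex) Minkowski difference, of diameter $\leq 2|\mc I_{j,\beta}|$ and radius $\leq|\mc I_{j,\beta}|$. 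I would then bound the number of $M_n\in[K_{n-1}M_{n-1},2K_{n-1}M_{n-1}]$ ruled out by the $B_j$'s and show that this number is strictly less than the size of the target interval.

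The crucial density input is that consecutive elements of $B_j$ differ by more than $2K_jM_j$. To establish this I would rewrite $(\mc F1)'_j$ as $d(k\w,D_j)>|\mc I_{j,\beta}|$ for $k\in\{1,\ldots,2K_jM_j\}$, using the identity $d(\mc I_{j,\beta},\mc I_{j,\beta}+k\w)=d(k\w,D_j)$. If $m_1<m_2\in B_j$, then $m_i\w\in D_j$, so $(m_2-m_1)\w\in D_j+D_j=2D_j$; since $D_j$ is symmetric convex and every $v\in D_j$ has norm at most $|\mc I_{j,\beta}|$, one verifies $d((m_2-m_1)\w,D_j)\leq|\mc I_{j,\beta}|$, contradicting $(\mc F1)'_j$ unless $m_2-m_1>2K_jM_j$.

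What remains is routine counting. For each $j\leq n-1$, an $m^\ast\in B_j$ produces at most $2M_j+1$ forward-bad values $M_n\in[m^\ast-M_j,m^\ast+M_j]$; by the spacing estimate the target interval contains at most $(K_{n-1}M_{n-1}+2M_j)/(2K_jM_j)+1$ such $m^\ast$. Doubling for the backward direction and summing yields a total bad count at most $3K_{n-1}M_{n-1}\sum_{j=0}^{n-1}1/K_j\leq K_{n-1}M_{n-1}/2$ by the hypothesis $\sum 1/K_j\leq 1/6$, which is strictly less than the $K_{n-1}M_{n-1}+1$ candidates, so a good $M_n$ exists. The main obstacle is the convex-geometry density step: in higher dimension $(\mc F1)'_j$ only gives $d(k\w,0)>|\mc I_{j,\beta}|$ (not the $>2|\mc I_{j,\beta}|$ separation one has in the one-dimensional case), and bridging this gap requires working with $D_j$ rather than with pointwise distances; the counting and the particular factor $1/6$ are routine modulo this.
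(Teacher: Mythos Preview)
Your approach is essentially the one the paper takes: reduce to the $j=n$ case, use the nesting $\mc I_{n,\beta}\ssq \mc I_{j,\beta}$, exploit $(\mc F1)'_j$ to get a lower bound $>2K_jM_j$ on the spacing of the ``bad'' translates, and then count. Your treatment of the spacing via the difference set $D_j$ is more explicit than the paper's one-line citation of $(\mc F1)'_{n-1}$ and is correct; the paper simply organises the count by fixing $l$ and bounding the number of bad $q$'s, whereas you fix $m^\ast\in B_j$ and bound the number of $M_n$'s it rules out---these are dual and lead to the same conclusion (your version carries an extra ``$+1$'' per $m^\ast$, so the constant in front of $K_{n-1}M_{n-1}\sum_j 1/K_j$ comes out somewhat larger than $3$, but there is ample slack and this only affects the admissible value of the threshold $\tfrac16$).

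There is, however, one genuine omission: you never explain why a \emph{single} $M_n$ works simultaneously for all $\beta\in\mathscr B(n)$. As written, $B_j$ depends on $\beta$, so your pigeonhole only produces $M_n=M_n(\beta)$. The paper handles this at the outset by invoking Proposition~\ref{prop: monotonicity} (which rests on $(\A\ref{axiom: 6})$ and $(\A\ref{axiom: 10})$): since $\mc I_{j,\beta}\ssq \mc I_{j,\beta_+(n)}$ for every $j\le n$ and $\beta\in\mathscr B(n)$, it suffices to run the whole argument for $\beta=\beta_+(n)$. You should add this reduction before defining $B_j$ and $D_j$.
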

\begin{proof}
By Proposition~\ref{prop: monotonicity}, monotonicity of $f_{(\cdot)}(\theta,\x)$ yields that $f_\beta$ ($\beta \in \mathscr B(n)$) verifies $(\mc F \ref{axiom: diophantine 2})_n$ if just $f_{\beta_+(n)}$ does.
Therefore, we only consider $\beta=\beta_+(n)$ and suppress the index $\beta$ in the following.

Let $j=0,\ldots,n-1$. Then,
\begin{align*}
 \mc I_n- (M_n-1)\w  \cap
\bigcup_{l=-(M_j-1)}^{M_j+1}\mc I_j+l\w \neq \emptyset
\end{align*}
implies
\begin{align*}
\mc I_j- (M_n-1)\w\cap \mc I_j+l\w \neq \emptyset,
\end{align*}
for some $l\in\{-M_j+1,-M_j+2,\ldots ,M_j+1\}$.
By $\left(\mc F1\right)_{n-1}'$,
\begin{align*}
 \#\left\{q \in [K_{n-1}M_{n-1},2K_{n-1}M_{n-1}]\cap \N \left |
\mc I_j- (q-1)\w \cap \mc I_j+l\w \neq \emptyset
\right .\right\}\leq \frac{K_{n-1}M_{n-1}}{2K_{j}M_{j}}.
\end{align*}
Hence,
\begin{align*} 
\#\left\{q \in [K_{n-1}M_{n-1},2K_{n-1}M_{n-1}]\cap \N\left |
\mc I_j- (q-1)\w \cap \bigcup_{l=-(M_j-1)}^{M_j+1} \mc I_j+l\w \neq \emptyset
\right .\right\}\leq (2M_j+1) \frac{K_{n-1}M_{n-1}}{2K_{j}M_{j}}.
\end{align*}
For the number of $q\in \{K_{n-1}M_{n-1},K_{n-1}M_{n-1}+1, \ldots, 2K_{n-1}M_{n-1}\}$ with $\mc  I_j+ (q+1)\w \cap\bigcup_{l=-(M_j-1)}^{M_j+1} \mc I_j+l\w\neq \emptyset$, we get the same upper bound.
Therefore,
\begin{align*}
 & \#\left\{q \in [K_{n-1}M_{n-1},2K_{n-1}M_{n-1}]\cap \N\left |
\left(\mc I_j- (q-1)\w \cup \mc I_j+ (q+1)\w \right)\cap \bigcup_{j=0}^{n-1}\bigcup_{l=-(M_j-1)}^{M_j+1} \mc I_j+l\w \neq \emptyset
\right .\right\}\\
& \leq 2 K_{n-1}M_{n-1} \sum_{j=0}^{n-1}\frac{2M_j+1}{2K_{j}M_{j}}\leq
3K_{n-1}M_{n-1} \sum_{j=0}^{n-1}\frac{1}{K_{j}}.
\end{align*}
Thus, if $\sum_{j=0}^{n-1}\frac{1}{K_{j}} \leq \frac16$, there is
$M_n\in [K_{n-1}M_{n-1},2K_{n-1}M_{n-1}]\cap \N$ such that 
$\left(\mc F\ref{axiom: diophantine 2}\right)_n$ holds.
\end{proof}
Given $\alpha>1$ and $b_1=1-1/K_0$, set 
\begin{align*}
 \nu\=s-c\left(\alpha,b_1^2\right)S^2\alpha^{-(2b_1^2/p-5(1-b_1^2)p)},
\end{align*}
where $c\left(\alpha,b_1^2\right)$ is as in Lemma~\ref{lem: nu n}.
Theorem~\ref{thm: sink source orbit introduction} follows from the following statement.
\begin{thm}\label{thm: sink-source orbit refined} 
Suppose $\w$ is Diophantine of type $(\mathscr C,\eta)$ and $\left(f_\beta\right)_{\beta\in [0,1]}$ satisfies $(\A\ref{axiom: 1})$-$(\A\ref{axiom: 16})$.
Let there be $p\geq \sqrt{2}$ and $\alpha>1$ with
\begin{align*}
 \alpha_c^{-1}=\alpha_e = \alpha^{\frac2{p}}, \qquad \alpha_l^{-1}=\alpha_u = \alpha^p.
\end{align*}
Further, assume $2|\I_{0,\beta}|<\mathscr C (2K_0M_0)^{-\eta}$ for some $K_0,M_0 \in \N_{\geq2}$ and assume $\nu>0$. Then there exists $\alpha_0=\alpha_0(\nu,K_0,M_0,p,|C|,|E|,\eta,\mathscr C)$ such that
if $\alpha>\alpha_0$, there is $\beta_c \in [0,1]$ 
such that $f_{\beta_c}$ has a sink-source orbit in $\T^d\times [e^-,c^+]$, and hence an SNA and an SNR.
\end{thm}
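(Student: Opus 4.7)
The plan is to invoke Proposition~\ref{prop: sink source orbit}: the task reduces to constructing strictly increasing sequences $(M_n)_{n\in\N_0}, (K_n)_{n\in\N_0} \in \N^{\N_0}$ (with $M_0,K_0$ as given) together with a nested sequence of non-empty closed intervals $\mathscr B(n) \ssq [0,1]$ such that $(\mc F)_n$ holds for every $n$ and every $\beta \in \mathscr B(n)$, and such that $f_\beta^{M_n-1}(\mc A_n) \cap f_\beta^{-M_n}(\mc B_n) \neq \emptyset$ for all $n$. By compactness, any $\beta_c \in \bigcap_n \mathscr B(n)$ then satisfies the hypotheses of the proposition and produces a sink-source orbit in $\T^d \times [e^-,c^+]$, hence via Theorem~\ref{thm: sink source orbit <-> SNA} an SNA and an SNR.

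I proceed by induction on $n$, establishing at each stage that $(\I)_n$, the strong form $(\mc F1)_n'$, and $(\mc F2)_n$ all hold on $\mathscr B(n)$. The base case is immediate: $(\I)_0$ is Remark~\ref{rem: I0}, while $(\mc F1)_0'$ follows from the Diophantine estimate $d(k\w,0) \geq \mathscr C|k|^{-\eta}$ combined with the assumption $2|\I_{0,\beta}| < \mathscr C(2K_0M_0)^{-\eta}$. For the inductive step from $n$ to $n+1$, I first pick $K_n$ so large that $\sum_{j \geq 0} 1/K_j \leq 1/6$ (the hypothesis of Lemma~\ref{lem: F 2 holds}) and so that $b_{n+1}$ remains bounded below by $b_1^2$ for every $n$; this is feasible because the hypothesis $\nu > 0$ already forces $b_1^2$ above the threshold $5p^2/(2+5p^2)$ of Lemma~\ref{lem: nu n}. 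Second, Lemma~\ref{lem: F 2 holds} yields $M_{n+1} \in [K_nM_n, 2K_nM_n]$ for which $(\mc F2)_{n+1}$ holds on $\mathscr B(n)$. Third, Lemma~\ref{lem: geometric} provides $(\I)_{n+1}$ and the size bound
\[
|\I_{n+1,\beta}| \leq \sqrt{8}\sqrt{\tfrac{H_n^\phi + H_n^\psi}{v_n^-}}, \qquad \beta \in \mathscr B(n+1).
\]
Combining Lemma~\ref{lem: height of strips} (which gives $H_n^\phi+H_n^\psi \leq (|C|+|E|)\alpha^{-\delta M_n}$ with $\delta>0$ independent of $n$, using $b_n \geq b_1^2$ and $\alpha_c^{b_1^2}\alpha_u^{1-b_1^2},\alpha_e^{-b_1^2}\alpha_l^{-(1-b_1^2)} < 1$ for $\alpha$ large) with Lemma~\ref{lem: nu n} (which gives $v_n^- \geq \nu/2$ uniformly in $n$ for $\alpha$ large) yields $|\I_{n+1,\beta}| \leq C_1\alpha^{-\delta M_n/2}$. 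Finally, $(\mc F1)_{n+1}'$ reduces via the Diophantine property to verifying $2|\I_{n+1,\beta}| < \mathscr C(2K_{n+1}M_{n+1})^{-\eta}$, which follows for $\alpha$ large because $M_n$ grows geometrically while the Diophantine slack decays only polynomially in $M_{n+1}$.

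The principal obstacle is that $\alpha_0$ must be chosen once and for all, independently of $n$: the super-exponential decay $\alpha^{-\delta M_n/2}$ has to beat the polynomial loss $(2K_{n+1}M_{n+1})^\eta$ at \emph{every} stage simultaneously, and one must verify that $v_n^-$ is bounded below by a fixed positive constant along the entire induction. Since $M_n$ grows at least like $K_0^n$ while $K_{n+1}M_{n+1}$ grows by a factor bounded by $2K_n$ per step, the first quantity dominates super-exponentially, so a single threshold $\alpha_0=\alpha_0(\nu,K_0,M_0,p,|C|,|E|,\eta,\mathscr C)$ suffices. Once the induction is complete, $\bigcap_n \mathscr B(n)$ is a nested intersection of non-empty closed intervals in $[0,1]$ and hence non-empty; any $\beta_c$ in this intersection meets all hypotheses of Proposition~\ref{prop: sink source orbit}, completing the proof.
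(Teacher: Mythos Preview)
Your proposal is correct and follows essentially the same route as the paper: induct on $n$ to establish $(\mc I)_n$, $(\mc F1)_n'$, $(\mc F2)_n$ via Remark~\ref{rem: I0}, Lemma~\ref{lem: F 2 holds}, Lemma~\ref{lem: nu n}, Lemma~\ref{lem: geometric}, and Lemma~\ref{lem: height of strips}, then invoke Proposition~\ref{prop: sink source orbit}. The only notable difference is that the paper fixes the entire sequence $K_n = K_0\kappa^n$ at the outset (with $\kappa\in\N_{\geq 2}$ chosen so that $b=\lim b_n>b_1^2$), whereas you describe choosing $K_n$ during the step $n\to n+1$; since $(\mc F1)_n'$ already involves $K_n$, your ordering is slightly off and is cleaner if you fix all $K_n$ in advance as the paper does. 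Also, maintaining $b_n\geq b_1^2$ together with the monotonicity of $c(\alpha,\cdot)$ and of the exponent in Lemma~\ref{lem: nu n} actually gives $v_n^\pm\geq\nu$ rather than just $\nu/2$, so your bound there is looser than necessary but still sufficient.
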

\begin{rem}
 We can choose $\alpha_0$ to depend monotonously decreasing on $\nu$.
Further, note that since we assume $\nu>0$, we necessarily have $K_0>2+5p^2$.
\end{rem}

\begin{proof}
\ref{eq: defn diophantine prime} amounts to 
$2|\I_{n,\beta}|<d(k\w,0) \ (k=1,\ldots,2K_nM_n)$.
Note that since $\w$ is Diophantine of type $(\mathscr C,\eta)$, we have
\begin{align}\label{eq: F 1 n}
 2|\I_{n,\beta}|<\mathscr C (2K_nM_n)^{-\eta}<d(k\w,0).
\end{align}
Hence, $(\mc F 1)_0'$ holds by the assumptions. Therefore, Lemma~\ref{lem: F 2 holds} together with Remark~\ref{rem: I0}
yields the existence of $M_1 \in [K_0M_0,2K_0M_0]$ such that $(\mc F \ref{axiom: diophantine 2})_1$ holds for $\beta \in \mathscr B(1)$.
Lemma~\ref{lem: nu n} gives $\nu_0^\pm,\nu_1^\pm \geq \nu>0$ such that
Lemma~\ref{lem: geometric} yields $(\I)_1$.
By means of Lemma~\ref{lem: geometric} together with Lemma~\ref{lem: height of strips}, we get
\begin{align*}
 |\I_{1,\beta}|
\leq {\mathscr C}_1 
\alpha_c^{M_0/2}=
{\mathscr C}_1 
\alpha^{-M_0/p},
\end{align*}
where ${\mathscr C}_1 \= \sqrt{8\frac{|C|+|E|}{\nu}}$.

Let us set $\left(K_n\right)_{n\in \N_0}\= \left(K_0 \kappa^n\right)_{n\in \N_0}$ for some $\kappa \in \N_{\geq 2}$ large enough to guarantee that $b>b_1^2$, and hence
$\nu_n^\pm\geq\nu$ for all $n\in \N$.
Then, since $M_1\in[K_{0}M_{0},2 K_{0}M_{0}]$,
the right hand side of (\ref{eq: F 1 n}) is bounded from below by
\begin{align*}
 \mathscr C (2K_1M_1)^{-\eta}\geq\frac{\mathscr C}{\left(4\kappa K_0^2M_0\right)^{\eta}}
\end{align*}
for $n=1$.
Therefore, if $\alpha$ is large enough,
$(\mc F 1)_1'$ is verified.

Let $n\in \N_{\geq 2}$. Suppose $(\I)_{n-1}$ and $(\mc F)_{n-1}$ (for $\beta \in \mathscr B(n)$) hold
with 
 $K_j=K_0 \kappa^j$ and $M_j\in[K_{j-1}M_{j-1},2 K_{j-1}M_{j-1}]$ for $j=1,\ldots,n-1$.
As for $n=1$, Lemma~\ref{lem: F 2 holds} yields $M_n\in [M_{n-1}K_{n-1},2M_{n-1}K_{n-1}]$ such that $(\mc F \ref{axiom: diophantine 2})_n$ holds. Now, $(\I)_n$ follows similarly as in the case $n=1$.
By means of Lemma~\ref{lem: geometric} and Lemma~\ref{lem: height of strips}, we get
\begin{align*}
2\left|\I_{n,\beta}\right |\leq 
2{\mathscr C}_1  \alpha_c^{\frac{b_{n-1}}2 M_{n-1}} \alpha_u^{\frac{1-b_{n-1}}2 M_{n-1}}\leq
2 {\mathscr C}_1 
\alpha^{\left(-\frac{b}p+\frac{1-b}{2}p\right)M_{n-1}}
\leq
\frac{\mathscr C}{\left(4\kappa^{2n-1} K_0^2M_{n-1}\right)^{\eta}}
\end{align*}
and thereby $(\mc F 1)_n'$ for $\beta \in \mathscr B(n)$,
where the last inequality holds for all $n\in \N$ if $\alpha$ is large enough.

By induction, we thus see that there are sequences $\left(M_n\right)_{n\in\N_0}$
and $\left(K_n\right)_{n\in\N_0}$ such that $(\I)_n$ is true for all $n\in \N_0$.
Moreover, with these sequences we get $(\mc F)_n$ for each $n\in \N$ and $\beta \in \mathscr B(n)$. Applying Proposition~\ref{prop: sink source orbit} finishes the proof.
\end{proof}
\bibliography{Literaturnachweis_SNA}{}

\begin{thebibliography}{10}

\bibitem{Anagnostopoulou}
V.~Anagnostopoulou and T.~J{\"a}ger.
\newblock Nonautonomous saddle-node bifurcations: Random and deterministic
  forcing.
\newblock {\em Journal of Differential Equations}, 253(2):379 -- 399, 2012.

\bibitem{Arnold}
L.~Arnold.
\newblock {\em Random Dynamical Systems}.
\newblock Springer, 1998.

\bibitem{Bjerklöv}
K.~Bjerkl{\"o}v.
\newblock Dynamics of the quasi-periodic {S}chr{\"o}dinger cocycle at the
  lowest energy in the spectrum.
\newblock {\em Communications in Mathematical Physics}, 272:397--442, 2007.
\newblock 10.1007/s00220-007-0238-y.

\bibitem{BjerklövSaprynka}
K.~Bjerkl{\"o}v and M.~Saprykina.
\newblock Universal asymptotics in hyperbolicity breakdown.
\newblock {\em Nonlinearity}, 21(3):557--586, 2008.
\newblock QC 20100525.

\bibitem{Crucifix}
M.~Crucifix.
\newblock Why could ice ages be unpredictable?
\newblock {\em Climate of the {P}ast}, 9:1053--1098, 2013.

\bibitem{FuhrmannJäger}
G.~Fuhrmann and T.~J{\"a}ger.
\newblock {Non-smooth saddle-node bifurcations of forced monotone interval maps
  II: Dimensions of the SNA}.
\newblock in preparation, 2013.

\bibitem{Glendinning}
P.~Glendinning.
\newblock {The nonsmooth pitchfork bifurcation}.
\newblock {\em Discrete and continuous dynamical systems Series B}, pages
  457--464, 2004.

\bibitem{GOPY}
C.~{Grebogi}, E.~{Ott}, S.~{Pelikan}, and J.~A. {Yorke}.
\newblock {Strange attractors that are not chaotic}.
\newblock {\em Physica D Nonlinear Phenomena}, 13:261--268, Aug. 1984.

\bibitem{GrögerJäger}
M.~Gr{\"o}ger and T.~J{\"a}ger.
\newblock Dimensions of attractors in pinched skew products.
\newblock {\em Communications in Mathematical Physics}, 320(1):101--119, 2013.

\bibitem{HaroLlave}
A.~Haro and de~la Llave~R.
\newblock Manifolds on the verge of a hyperbolicity breakdown.
\newblock {\em Chaos: An Interdisciplinary Journal of Nonlinear Science},
  16(1):013120, 2006.

\bibitem{Herman}
M.~Herman.
\newblock Une m{\'e}thode pour minorer les exposants de {L}yapounov et quelques
  exemples montrant le caract{\`e}re local d'un th{\'e}or{\`e}me d'{A}rnold et
  de {M}oser sur le tore de dimension 2.
\newblock {\em Commentarii Mathematici Helvetici}, 58:453--502, 1983.
\newblock 10.1007/BF02564647.

\bibitem{JägerAMS}
T.~J{\"a}ger.
\newblock The creation of strange non-chaotic attractors in non-smooth
  saddle-node bifurcations.
\newblock {\em Memoirs of the American Mathematical Society}.

\bibitem{JägerETDS}
T.~J{\"a}ger.
\newblock On the structure of strange non-chaotic attractors in pinched skew
  products.
\newblock {\em Ergodic theory and dynamical systems}.

\bibitem{JägerNonLin}
T.~J{\"a}ger.
\newblock Quasiperiodically forced interval maps with negative schwarzian
  derivative.
\newblock {\em Nonlinearity}, 16(4):1239--1255, 2003.

\bibitem{Jäger}
T.~J{\"a}ger.
\newblock Strange non-chaotic attractors in quasiperiodically forced circle
  maps.
\newblock {\em Communications in Mathematical Physics}, 289:253--289, 2009.
\newblock 10.1007/s00220-009-0753-0.

\bibitem{Keller}
G.~Keller.
\newblock {A note on strange nonchaotic attractors}.
\newblock {\em Fundamenta Mathematica}, pages 139--148, 1996.

\bibitem{Kim}
J.~Kim and Y.~Kim.
\newblock Strange nonchaotic attractors in quasiperiodically driven
  {I}zhikevich neuron models.
\newblock {\em Journal of the Korean Physical Society}, 60(4):660--664, 2012.

\bibitem{Millionscikov}
V.~Million{\u{s}\u{c}}ikov.
\newblock Proof of the existence of irregular systems of linear differential
  equations with quasi periodic coefficients.
\newblock {\em Differ.~Uravn.}, 5(11):1979--1983, 1969.

\bibitem{JägerSiegmund}
T.~Nguyen, T.~Doan, T.~J{\"a}ger, and S.~Siegmund.
\newblock Saddle-node bifurcations in the quasiperiodically forced logistic
  map.
\newblock {\em International Journal of Bifurcation and Chaos}, 21:1427--1438,
  2011.

\bibitem{Feudel}
H.~Osinga, J.~Wiersig, P.~Glendinning, and U.~Feudel.
\newblock {M}ultistability and nonsmooth bifurcations in the quasiperiodically
  forced circle map.
\newblock {\em International Journal of Bifurcations and Chaos}, 11:3085--3105,
  2001.

\bibitem{Pötsche}
C.~P{\"o}tsche.
\newblock {Bifurcations in Nonautonomous Dynamical Systems: Results and tools
  in discrete time}.
\newblock {\em Servizo de Publicaci{\'o}ns da Universidade de Vigo}, pages
  163--212, 2011.

\bibitem{Sonechkin}
D.~Sonechkin and N.~Ivachtchenko.
\newblock On the role of quasiperiodic forcing in the interannual and
  interdecadal climate variations.
\newblock {\em {CLIVAR} exchanges}, 6:5--6, 2001.

\bibitem{Stark}
J.~Stark.
\newblock Transitive sets for quasi-periodically forced monotone maps.
\newblock {\em Dynamical Systems: An International Journal}, 18(4):351 -- 364,
  2003.

\bibitem{Scheffler}
A.~J. Veraart, E.~J. Faassen, V.~Dakos, E.~H. van Nes, M.~L{\"u}rling, and
  M.~Scheffer.
\newblock Recovery rates reflect distance to a tipping point in a living
  system.
\newblock {\em Nature}, 2012.

\bibitem{Vinograd}
R.~Vinograd.
\newblock A problem suggested by {N.R.~Erugin}.
\newblock {\em Differ.~Uravn.}, 11(4):632--638, 1975.

\bibitem{Young}
L.-S. Young.
\newblock Lyapunov exponents for some quasi-periodic cocycles.
\newblock {\em Ergodic Theory Dyn. Syst.}, 17:483--504, 1997.

\end{thebibliography}
\bibliographystyle{habbrv}

\vspace*{0.2in}

\author{$\ $ \\

Gabriel Fuhrmann\\
Emmy Noether Group: Low-dimensional and Nonautonomous Dynamics,\\
TU-Dresden,\\
Dresden, Germany.\\
\texttt{Gabriel.Fuhrmann@mailbox.tu-dresden.de}
\end{document}